\documentclass[10]{amsart}
\usepackage{url}
\usepackage{latexsym}
\usepackage{float}
\usepackage{amssymb}
\usepackage{tikz}
\usepackage{xypic}
\usepackage{MnSymbol}
\usepackage{caption}
\setlength{\textwidth}{6.5 in}
\setlength{\textheight}{9.0 in}
\hoffset=-0.75in
\voffset=-0.5in

\usepackage[vcentermath]{youngtab}

\newtheorem{thm}{Theorem}[section]

\newtheorem{lem}[thm]{Lemma}
\newtheorem{cor}[thm]{Corollary}

\newtheorem{pro}[thm]{Proposition}

\theoremstyle{remark}

\newtheorem{ex}[thm]{Example}
\newtheorem{rem}[thm]{Remark}

\theoremstyle{definition}
\newtheorem{Def}[thm]{Definition}
\newtheorem{notation}[thm]{Notation}

\numberwithin{equation}{section}


\newcommand{\R}{\mathbb{R}}           
\newcommand{\C}{\mathbb{C}}           
\newcommand{\Z}{\mathbb{Z}}           

\newcommand{\rank}{\text{rank}}

\newcommand{\ad}{\text{ad}}
\newcommand{\KS}{\mathrm{KS}}

\newcommand{\AC}{\mathcal{AC}}
\newcommand{\ACC}{\mathrm{AC}}


\newcommand{\fb}{{\mathfrak b}}

\newcommand{\fg}{{\mathfrak g}}

\newcommand{\fk}{{\mathfrak k}}
\newcommand{\fl}{{\mathfrak l}}

\newcommand{\fn}{{\mathfrak n}}

\newcommand{\fp}{{\mathfrak p}}

\newcommand{\fq}{{\mathfrak q}}
\newcommand{\ft}{{\mathfrak t}}
\newcommand{\fu}{{\mathfrak u}}

\newcommand{\fs}{{\mathfrak s}}


\newcommand{\be}{\begin{equation}}
\newcommand{\beu}{\begin{equation*}}


%
\newcommand{\GLnC}{\mathrm{GL}_n(\C)}

\newcommand{\glnC}{\mathfrak{gl}_n(\C)}

\newcommand{\SLnR}{\mathrm{SL}_n(\R)}
\newcommand{\slnC}{\mathfrak{sl}_n(\C)}
\newcommand{\slnR}{\mathfrak{sl}_n(\R)}

\newcommand{\slR}[1]{\mathfrak{sl}_{#1}(\R)}
\newcommand{\soC}[1]{\mathfrak{so}_{#1}(\C)}

\newcommand{\spC}[1]{\mathfrak{sp}_{#1}(\C)}

\makeatletter
\newcommand\xleftrightarrow[2][]{%
  \ext@arrow 9999{\longleftrightarrowfill@}{#1}{#2}}
\newcommand\longleftrightarrowfill@{%
  \arrowfill@\leftarrow\relbar\rightarrow}
\makeatother

\begin{document}

\baselineskip=16pt
\sloppy
\title[Approximation of nilpotent orbits for simple Lie groups]
{Approximation of nilpotent orbits for simple Lie groups}

\author{Lucas Fresse}
\author{Salah Mehdi}
\thanks{{\em 2010 Mathematics Subject Classification}. Primary: 17B08; Secondary: 22E15}
\keywords{Lie groups; semisimple and nilpotent orbits; approximation; asymptotic cones}
\address{Institut Elie Cartan de Lorraine, CNRS - UMR 7502, Universit\'e de Lorraine - France}
\email{lucas.fresse@univ-lorraine.fr}
\email{salah.mehdi@univ-lorraine.fr}

\begin{abstract}
We propose a systematic and topological study of limits $\lim_{\nu\to 0^+}G_\mathbb{R}\cdot(\nu x)$ of continuous families of adjoint orbits for a non-compact simple real Lie group $G_\R$.
This limit is always a finite union of nilpotent orbits. We describe explicitly these nilpotent orbits in terms of Richardson orbits in the case of hyperbolic semisimple elements.
We also show that one can approximate minimal nilpotent orbits or even nilpotent orbits by elliptic semisimple orbits. The special cases of $\mathrm{SL}_n(\mathbb{R})$ and $\mathrm{SU}(p,q)$ are computed in detail.
\end{abstract}

\maketitle

\section{Introduction}

\subsection{Continuous families of adjoint orbits and their limits}

The structure of a connected real Lie group $G_{\mathbb R}$ is closely related with the topology and the geometry of its adjoint orbits on its Lie algebra $\fg_\R$. For instance, when $G_{\mathbb R}$ is Abelian, its adjoint orbits are singletons, whereas, when $G_{\mathbb R}$ is simple, adjoint orbits are symplectic manifolds. If $G_{\mathbb R}$ is compact, its adjoint orbits are compact symplectic manifolds. For example, each adjoint orbit of the $3$-dimensional special unitary group $G_{\mathbb R}=\mathrm{SU}(2)$ takes the form $G_{\mathbb R}\cdot (r x)$, where $x=\begin{pmatrix} i&0\\ 0&-i\end{pmatrix}\in\fg_{\mathbb R}={\mathfrak s}{\mathfrak u}(2)$ and $r\geq 0$, and it is diffeomorphic with the sphere of radius  $r\geq 0$.
%
\begin{figure}[H]
\includegraphics[scale=0.55]{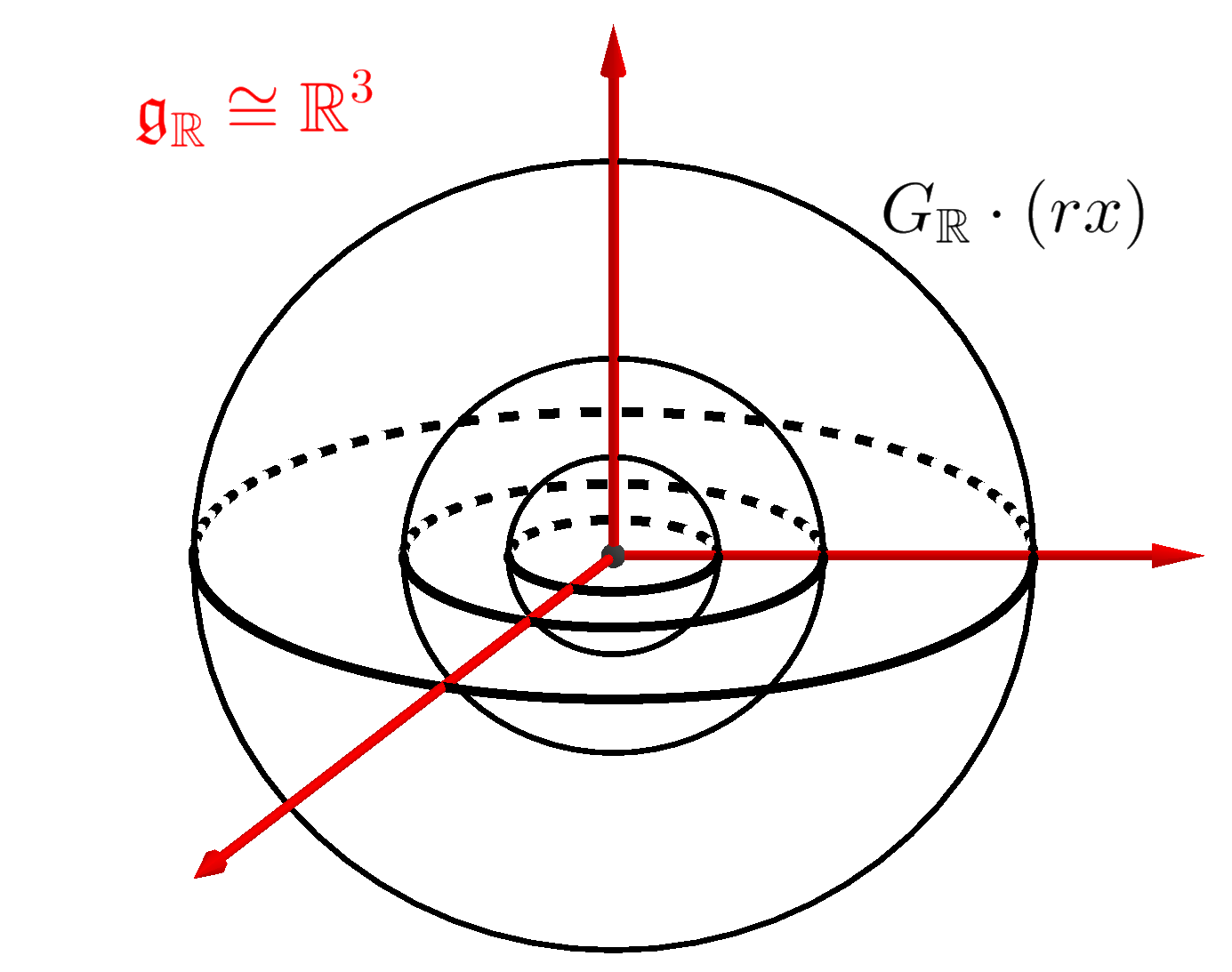}
\caption{Adjoint orbits for $\mathrm{SU}(2)$.}
\label{figure-1}
\end{figure}
%
The picture in Figure \ref{figure-1} suggests that the continuous family $\{G_{\mathbb R}\cdot (r x)\}_{r>0}$ converges to the trivial orbit in the following sense: 
\begin{equation}
\label{limit-1.1}
\lim_{r\to 0^+}G_\mathbb{R}\cdot(r x):=\bigcap_{\epsilon>0}\overline{\bigcup_{r\in(0,\epsilon)}G_\mathbb{R}\cdot(r x)}=\{0\}.
\end{equation}
It turns out that this true for any $x$, whenever $G_{\mathbb R}$ is compact and simple (see Remark \ref{remcpt}). 

The non-compact case is more subtle. For instance, the nontrivial adjoint orbits of the $3$-dimensional special linear group $G_{\mathbb R}=\mathrm{SL}_2({\mathbb R})$ split into three families (see Example \ref{E1}): 
\begin{itemize}
\item[$\bullet$] $G_{\mathbb R}\cdot(\lambda f^\pm)$, $\lambda>0$, where $f^\pm=\pm\begin{pmatrix}0&1\\ -1&0\end{pmatrix}$, which identifies with the upper sheet $z\geq 0$, in the case of $f^+$ (resp., the lower sheet $z\leq 0$, in the case of $f^-$) of the two-sheeted hyperboloid $z^2-x^2-y^2=\lambda^2$;
\item[ $\bullet$]$G_{\mathbb R}\cdot(s\varphi)$, $s>0$, where $\varphi=\begin{pmatrix}1&0\\ 0&-1\end{pmatrix}$, which identifies with the one-sheeted hyperboloid $z^2-x^2-y^2=-s^2$;
\item[ $\bullet$]$G_{\mathbb R}\cdot\psi^{\pm}$, where $\psi^\pm=\begin{pmatrix}0&\frac{1}{2}\pm\frac{1}{2}\\ \frac{1}{2}\mp\frac{1}{2}&0\end{pmatrix}$, which identifies with the connected component $z>0$ (resp., $z<0$) of the cone $z^2-x^2-y^2=0$.
\end{itemize}
%
\begin{figure}[H]
\includegraphics[scale=0.5]{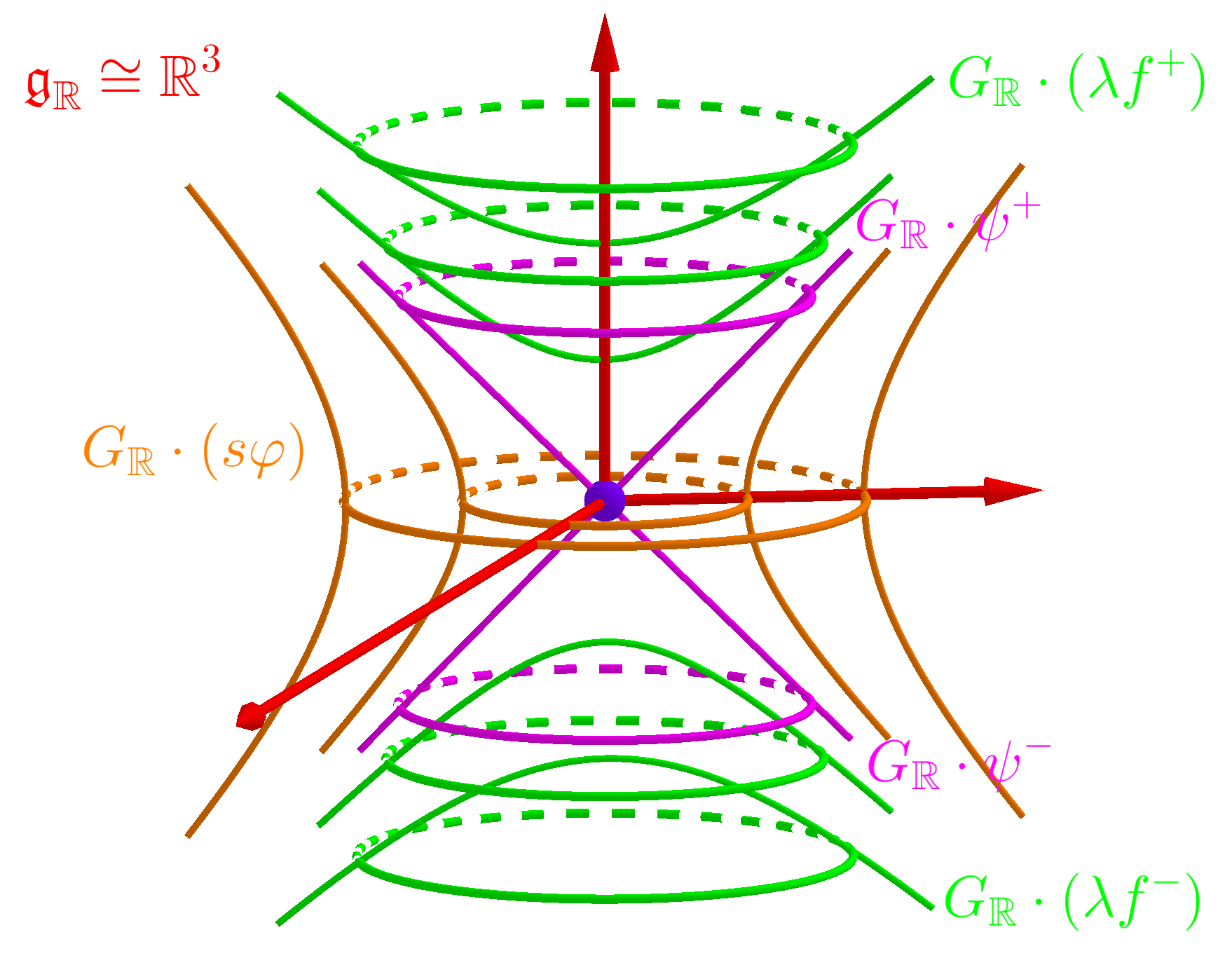}
\caption{Adjoint orbits for $\mathrm{SL}_2({\mathbb R})$.}
\label{I1}
\end{figure}
%

By the definition, these orbits are homogeneous spaces for $G_{\mathbb R}=\mathrm{SL}_2({\mathbb R})$. More precisely, if $P_{\mathbb R}=M_{\mathbb R}A_{\mathbb R}N_{\mathbb R}$ is the Langlands decomposition of a minimal parabolic subgroup of $G_{\mathbb R}$, $K_{\mathbb R}$ a maximal compact subgroup of $G_{\mathbb R}$ and $N_{\mathbb R}^-$ the opposite of $N_{\mathbb R}$ (i.e., defined by the opposite of the positive system of restricted roots), then one has the following diffeomorphisms:
\begin{equation*}
G_{\mathbb R}\cdot f^\pm\simeq G_{\mathbb R}/K_{\mathbb R},\;\;\;G_{\mathbb R}\cdot \varphi\simeq G_{\mathbb R}/M_{\mathbb R}A_{\mathbb R},\;\;\;
G_{\mathbb R}\cdot\psi^{+}\simeq G_{\mathbb R}/M_{\mathbb R}N_{\mathbb R}\;\;\text{ and }\;\;G_{\mathbb R}\cdot\psi^{-}\simeq G_{\mathbb R}/M_{\mathbb R}N_{\mathbb R}^-.
\end{equation*}
In particular, besides of being symplectic homogeneous manifolds, the orbits $G_{\mathbb R}\cdot f^\pm$ and $G_{\mathbb R}\cdot \varphi$ are respectively Hermitian Riemannian and pseudo-Riemannian symmetric spaces, while the orbits $G_{\mathbb R}\cdot\psi^{\pm}$ are neither symmetric nor reductive. Moreover, using the limit defined as in (\ref{limit-1.1}), one checks that (see Example \ref{E1}): 
\begin{eqnarray*}
\lim_{\lambda\to 0^+}G_\mathbb{R}\cdot(\lambda f^\pm)&=&\overline{G_{\mathbb R}\cdot\psi^\pm}\;\;=\;\;\lim_{\nu\to 0^+}G_\mathbb{R}\cdot(\nu\psi^\pm),\\
\lim_{s\to 0^+}G_\mathbb{R}\cdot(s\varphi)&=&\overline{G_{\mathbb R}\cdot\psi^+\cup G_{\mathbb R}\cdot\psi^-},
\end{eqnarray*}
where $\overline{G_{\mathbb R}\cdot\psi^\pm}$ (resp., $\overline{G_{\mathbb R}\cdot\psi^+\cup G_{\mathbb R}\cdot\psi^-}$) denotes the Zariski closure of $G_{\mathbb R}\cdot\psi^\pm$ (resp., $G_{\mathbb R}\cdot\psi^+\cup G_{\mathbb R}\cdot\psi^-$) in ${\mathfrak g}_{\mathbb R}$. 
In other words, the continuous family
$\{G_\mathbb{R}\cdot(\lambda f^+)\}_{\lambda >0}$ (resp., $\{G_\mathbb{R}\cdot(\lambda f^-)\}_{\lambda >0}$) of Riemannian manifolds converges towards the singular variety defined as the closure $\overline{G_{\mathbb R}\cdot\psi^+}$ (resp., $\overline{G_{\mathbb R}\cdot\psi^-}$) of the nilpotent orbit $G_{\mathbb R}\cdot\psi^+$ (resp., $G_{\mathbb R}\cdot\psi^-$).

Nilpotent orbits of non-compact simple Lie groups $G_{\mathbb R}$ play an important role in Mathematical Physics in the following sense. A general space time with symmetry group $G_{\mathbb R}$ is a homogeneous space $G_{\mathbb R}/H_{\mathbb R}$, where $H_{\mathbb R}$ is a closed subgroup of $G_{\mathbb R}$. The cotangent bundle $T^{\star}(G_{\mathbb R}/H_{\mathbb R})$ is naturally equipped with a structure of a symplectic manifold on which the group $G_{\mathbb R}$ acts in such a way that there is a $G_{\mathbb R}$-equivariant moment map
$$J:T^{\star}(G_{\mathbb R}/H_{\mathbb R})\rightarrow{\mathfrak g}_{\mathbb R}^*\cong\fg_\R.$$
In particular, the image of $J$ is a union of (co)adjoint orbits for $G_{\mathbb R}$. Let $\pi:T^{\star}(G_{\mathbb R}/H_{\mathbb R})\rightarrow G_{\mathbb R}/H_{\mathbb R}$ be the natural projection. Then, following Souriau \cite[\S14--15]{Sou70}, an orbit ${\mathcal O}=G_\R\cdot x$ in the image of $J$ is said to be the classical phase space of a free particle moving on $G_{\mathbb R}/H_{\mathbb R}$ if the projection $\pi(J^{-1}(x))$ of $J^{-1}(x)$ is a timelike or a lightlike geodesic on $G_{\mathbb R}/H_{\mathbb R}$ (assuming that geodesics exist on $G_\R/H_\R$). In this picture, a semisimple orbit describes the classical dynamics of a {\it massive} particle, while a nilpotent orbit describes the classical dynamics of a {\it massless} particle. However, unlike {\it massive} particles, it is not known how to quantize canonically classical dynamics of {\it massless} particles. In this respect, a systematic approximation of nilpotent orbits by semisimple orbits could help to understand better the quantization of {\it massless} particles.

On the other hand, limits of semisimple (most often elliptic) orbits are used in Representation Theory to bridge objects of different nature such as associated varieties, wave front sets, and characters, see \cite{BV,BV2,Bozicevic,Harris,HHO,KO,KO1,Rossmann80,Rossmann} and references therein. However, these limits are used as a tool and, to our best knowledge, there is no systematic study of the limit $\lim\limits_{\nu\to 0^+}G_\mathbb{R}\cdot(\nu x)$ for an arbitrary non-compact simple Lie group $G_{\mathbb R}$ and any $x\in\fg_{\mathbb R}$, which is based on topological arguments only.

\subsection{Outline of the paper}
This paper aims to provide a self-contained and systematic topological study of the limit of adjoint semisimple orbits
for connected non-compact simple linear real Lie groups. We also address the reverse problem of realizing (the closure of) a prescribed nilpotent orbit as the limit of a specific family of semisimple orbits. In this respect, our main results appear in Sections \ref{hyperbolic}--\ref{elliptic}.

Let us describe in more detail the content of the paper.
For the convenience of the reader, in Section \ref{preliminaries}, we collect basic definitions and properties about Jordan decompositions, elliptic and hyperbolic elements, ${\mathfrak s}{\mathfrak l}_2$-triples and the classification of complex nilpotent orbits. 

The definition and first properties of limits of orbits are given in Section \ref{limitorbit}. We  first observe that ${\lim\limits_{\nu\to 0^+}G_\mathbb{R}\cdot(\nu x)}$ is a finite union of nilpotent orbits which contains the limit $\lim\limits_{\nu\to 0^+}G_\mathbb{R}\cdot(\nu x_s)$ (resp., the Zariski closure $\overline{G_{\mathbb R}\cdot x_n}$) of the semisimple (resp., nilpotent) part of $x$ in its Jordan decomposition (Proposition \ref{basicprop}). In fact, every nilpotent orbit appears in the limit set 
$\lim\limits_{\nu\to 0^+}G_\mathbb{R}\cdot(\nu x)$ of a semisimple (elliptic or hyperbolic) element $x\in\fg_{\mathbb R}$ (Proposition \ref{P-existence}). 
This supports the idea that limits of orbits can be used to approximate nilpotent orbits by semisimple orbits.
Our main concern will be then to study to what extent a nilpotent orbit $\mathcal{O}$ can be approached by a continuous family of semisimple orbits $\{G_\R\cdot(\nu x)\}_{\nu>0}$ through its limit; the most desirable case is when $\lim\limits_{\nu\to 0^+}G_\R\cdot(\nu x)$ coincides with the closure of $\mathcal{O}$.

In Section \ref{cones}, we relate the limit set $\lim\limits_{\nu\to 0^+}G_\mathbb{R}\cdot(\nu x)$ with various asymptotic cones in ${\mathfrak g}_{\mathbb R}$ or in its complexification ${\mathfrak g}$. 
Namely, consider the sphere 
\begin{equation*}
{\mathbb S}({\mathfrak g}_{\mathbb R}):=({\mathfrak g}_{\mathbb R}\setminus\{0\})/{\mathbb R}^\star_+
\end{equation*}
and the embedding 
\begin{equation*}
\iota^+:{\mathfrak g}_{\mathbb R}\hookrightarrow {\mathbb S}({\mathfrak g}_{\mathbb R}\oplus{\mathbb R}),\;v\mapsto [v\oplus 1].
\end{equation*}
We define the real asymptotic cone ${\mathcal AC}_{\mathbb R}(G_{\mathbb R}\cdot x)$ of $G_{\mathbb R}\cdot x$ as follows (Definition \ref{realAC}):
\begin{equation*}
{\mathcal AC}_{\mathbb R}(G_{\mathbb R}\cdot x):=\{0\}\cup(\pi^+)^{-1}({\mathbb S}({\mathfrak g}_{\mathbb R})\cap\overline{\iota^+(G_{\mathbb R}\cdot x)}),
\end{equation*}
where $\pi^+:{\mathfrak g}_{\mathbb R}\setminus\{0\}\longrightarrow ({\mathfrak g}_{\mathbb R}\setminus\{0\})/{\mathbb R}^\star_+$ is the natural surjection. We observe in  Proposition \ref{P-cones} that our asymptotic cone coincides with the cone introduced by Harris--He--\'Olafsson in \cite{HHO}. 
Then we show that this cone coincides with the limit set 
$\lim\limits_{\nu\to 0^+}G_\mathbb{R}\cdot(\nu x)$ (Theorem \ref{posconethm}):
\begin{equation*}
\lim_{\nu\to 0^+}G_\mathbb{R}\cdot(\nu x)={\mathcal AC}_{\mathbb R}(G_{\mathbb R}\cdot x).
\end{equation*}

In Section \ref{nontriviality}, we deduce that the limit $\lim\limits_{\nu\to0^+}G_\R\cdot(\nu x)$ is always nontrivial whenever $x\not=0$ (Theorem \ref{theorem-nontriv}).
Recall that $G_\R$ is assumed to be simple and non-compact.

{}From the formulation in terms of the asymptotic cone, we also deduce that, when $x$ is semisimple, ${\lim\limits_{\nu\to 0^+}G_\mathbb{R}\cdot(\nu x)}$ is contained in the Zariski closure of the Richardson orbit of $x$ (Theorem \ref{posconethm}): 
\begin{equation*}
\lim_{\nu\to 0^+}G_\mathbb{R}\cdot(\nu x)
\subset \overline{\mathcal{O}_{\mathrm{Rich}}(x)}\cap\mathfrak{g}_\mathbb{R}.
\end{equation*}
Recall that $\mathcal{O}_{\mathrm{Rich}}(x)$ is the unique dense orbit in $G\cdot{\mathfrak u}(x)$, where $G$ is the complexification of $G_{\mathbb R}$ 
with Lie algebra ${\mathfrak g}$ and ${\mathfrak u}(x)$ is the nilradical of any parabolic subalgebra of ${\mathfrak g}$ which contains the centralizer of $x$ as a Levi factor. If, moreover, $x$ is hyperbolic, i.e., $\text{ad}(x)$ has real eigenvalues on ${\mathfrak g}_{\mathbb R}$, and thus on ${\mathfrak g}$, 
the above inclusion becomes an equality in the case of $\mathrm{SL}_n({\mathbb R})$ (Corollary \ref{corollary-SLn}). Writing ${\mathfrak u}_{\mathbb R}(x)$ 
for the nilradical of the real parabolic subalgebra of ${\mathfrak g}_{\mathbb R}$ defined by the positive eigenspaces of $\text{ad}(x)$ we obtain that (Theorem \ref{T4}):
\begin{equation*}
\lim_{\nu\to 0^+}G_\mathbb{R}\cdot(\nu x)=G_\mathbb{R}\cdot\mathfrak{u}_\mathbb{R}(x)=\overline{\mathcal{O}_1}\cup\ldots\cup\overline{\mathcal{O}_k},
\end{equation*}
where $\mathcal{O}_1,\ldots,\mathcal{O}_k$ are nilpotent orbits which are all of the same dimension, for an arbitrary non-compact connected simple linear real Lie group $G_{\mathbb R}$ and a semisimple hyperbolic $x\in\fg_{\mathbb R}$.

Hyperbolic semisimple elements naturally arise in the study of $\fs\fl_2$-triples.
More precisely, by Jacobson--Morozov theorem, every nilpotent element $e\in{\mathfrak g}_{\mathbb R}$ lies in an 
$\fs\fl_2$-triple $\{h,e,f\}$ for some semisimple element $h$ and a nilpotent element $f$ in ${\mathfrak g}_{\mathbb R}$ which are unique up to conjugation under $G_{\mathbb R}$. We show that if the Richardson orbit $\mathcal{O}_{\mathrm{Rich}}(h)$ has real forms which all intersect ${\mathfrak u}_{\mathbb R}(h)$, then 
(Proposition \ref{PropReal}):
\begin{equation*}
\lim_{\nu\to0^+}G_\R\cdot(\nu h)=\overline{\mathcal{O}_{\mathrm{Rich}}(h)\cap{\mathfrak g}_{\mathbb R}}=\overline{\mathcal O_1}\cup\ldots\cup\overline{\mathcal O_r},
\end{equation*}
where ${\mathcal O}_1,\ldots,{\mathcal O}_r$ are the real forms of $\mathcal{O}_{\mathrm{Rich}}(h)$. In particular, this equality holds for any $e\in{\mathfrak g}_{\mathbb R}$ when $G_{\mathbb R}=\mathrm{SL}_n({\mathbb R})$ (see Theorem \ref{noneventhm2}, which includes an explicit description of the limit set), and for an arbitrary non-compact connected simple linear real Lie group $G_{\mathbb R}$ when $e$ is even, i.e., when the eigenvalues of $\text{ad}(h)$ are all even integers (Theorem \ref{T-P3}). 

Finally, we consider the complementary case where $x\in{\mathfrak g}_{\mathbb R}$ is semisimple elliptic, i.e., the eigenvalues of $\ad(x)$ are all pure imaginary.
Suppose that $G_{\mathbb R}$ contains a compact Cartan subgroup $T_{\mathbb R}$. Let $K_{\mathbb R}$ be a maximal compact subgroup of $G_{\mathbb R}$ with complexified Lie algebra ${\mathfrak k}$ and let ${\mathfrak g}={\mathfrak k}\oplus{\mathfrak p}$ be the corresponding Cartan decomposition of ${\mathfrak g}$. Let $\fu=\fu(ix)\subset\fg$ be the nilradical of the parabolic subalgebra associated to the semisimple element $ix\in\fg$. There is a unique $K$-orbit ${\mathcal O}^+$ in ${\mathfrak p}$ which intersects the subspace $\fu\cap{\mathfrak p}$ along a dense open subset. As a consequence of deep results in Representation Theory, it turns out that (see (\ref{formula-elliptic})): 
\begin{equation*}
\lim_{\nu\to0^+}G_\R\cdot(\nu x)=\overline{\KS^{-1}(\mathcal{O}^+)},
\end{equation*}
where $\text{KS}$ is the Kostant--Sekiguchi bijection between nilpotent adjoint orbits of $G_{\mathbb R}$ and nilpotent $K$-orbits of $\fp$. To our best knowledge, there is no direct proof of this equality. Based on this equality, we prove that every even nilpotent orbit of $\fg_\R$ can be approximated by a continuous family of semisimple elliptic orbits.
Specifically,
if $\{h,e,f\}$ is an even $\mathfrak{sl}_2$-triple, then we have
(Theorem \ref{T-elliptic-even}):
\[\lim_{\nu\to 0^+}G_\R\cdot(\nu(e-f))=\overline{G_\R\cdot e}.\]
In the case where $G_\R$ is classical, we provide continuous families of semisimple elliptic orbits whose limits attempt to approximate minimal nilpotent orbits (Theorem \ref{T-minimal}).
Finally, for $G_\R=\mathrm{SU}(p,q)$, we show that (the closure of) every nilpotent orbit can be realized as the limit of a family of semisimple elliptic orbits (Theorem \ref{T-SUpq}).

\section{Preliminaries}\label{preliminaries}

Let $G_{\mathbb R}$ be a connected simple linear real Lie group with Lie algebra 
$\fg_{\mathbb R}$ and let $G$ be the complexification of $G_{\mathbb R}$ whose Lie algebra ${\mathfrak g}$ is the complexification of $\fg_{\mathbb R}$. 
Fix a maximal compact subgroup $K_{\mathbb R}$ 
of $G_{\mathbb R}$ with Lie algebra $\fk_{\mathbb R}$ fixed pointwise by a Cartan involution $\theta$. The Lie algebra $\fk$ of the complexification $K$ of $K_{\mathbb R}$ is the complexification of $\fk_{\mathbb R}$. Write 
\begin{equation}\label{cartandecomp}
\fg_{\mathbb R}=\fk_{\mathbb R}\oplus\fp_{\mathbb R}\;\text{ and }\;\fg=\fk\oplus\fp
\end{equation}
for the corresponding Cartan decompositions of $\fg_{\mathbb R}$ and $\fg$, respectively. 

An element $x$ of $\fg_{\mathbb R}$ (resp., $\fg$) is said to be {\it semisimple} if the endomorphism $\ad(x):\fg_{\mathbb R}\rightarrow \fg_{\mathbb R}$ (resp., $\ad(x):\fg\rightarrow \fg$) is diagonalizable. A semisimple element $x$ of $\fg_{\mathbb R}$ is said to be {\it elliptic} (resp., {\it hyperbolic}) if the eigenvalues of $\ad(x)$ are all pure imaginary (resp., real). An element $x$ of $\fg_{\mathbb R}$ (resp., $\fg$) is said to be {\it nilpotent} if the endomorphism $\ad(x):\fg_{\mathbb R}\rightarrow \fg_{\mathbb R}$ (resp., $\ad(x):\fg\rightarrow \fg$) is nilpotent. By Jordan decomposition, any element $x$ of $\fg_{\mathbb R}$ (resp., $\fg$) can be written in a unique way as 
\begin{equation*}
x=x_s+x_n,
\end{equation*}
where $x_s$ (resp., $x_n$) is semisimple (resp., nilpotent) in $\fg_{\mathbb R}$ (resp., $\fg$) and $[x_s,x_n]=0$. Moreover, any element in $\fg$ commuting with $x$ commutes with $x_s$ and $x_n$ as well.

If $x$ belongs to $\fg_{\mathbb R}$, then its semisimple part $x_s$ can be written in a unique way as
\[
x_s=x_e+x_h,\quad\mbox{that is,}\quad x=x_e+x_h+x_n,
\]
where $x_e$ is elliptic, $x_h$ is hyperbolic, and $x_e,x_h,x_n$ commute with each other. 
Moreover, any $G_\mathbb{R}$-orbit in $\fg_\mathbb{R}$ contains an element $x$ such that $x_e$ belongs to $\fk_\mathbb{R}$ and $x_h$ belongs to $\fp_\mathbb{R}$
(see \cite[Proposition 2.10]{Vogan}).

If $e$ is a nonzero nilpotent element of $\fg_{\mathbb R}$ (resp., $\fg$), by Jacobson--Morozov theorem there exist a semisimple element $h$ and a nilpotent element $f$ in $\fg_{\mathbb R}$ (resp., $\fg$) such that (\cite[Theorem 9.2.1]{CM}):
\begin{equation*}
[h,e]=2e,\;\;[h,f]=-2f,\;\text{ and }\;[e,f]=h.
\end{equation*}
The triple $\{h,e,f\}$ is said to be a {\it standard triple} (or {\it $\mathfrak{sl}_2$-triple}), while $h$ and $e$ are respectively the {\it neutral element} and the {\it nilpositive element} of the triple. It is known that any standard triple $\{h,e,f\}\subset\mathfrak{g}_\mathbb{R}$ is $G_{\mathbb R}$-conjugate to a {\it Cayley triple} $\{h^\prime,e^\prime,f^\prime\}$, i.e., such that $\theta(e^\prime)=-f^\prime$, $\theta(f^\prime)=-e^\prime$, and  $\theta(h^\prime)=-h^\prime$. Furthermore, we associate with a Cayley triple $\{h,e,f\}\subset\mathfrak{g}_{\mathbb{R}}$ the standard triple $\{h^\prime,e^\prime,f^\prime\}\subset\mathfrak{g}$ given by 
\begin{eqnarray*}
h^\prime&=&i(e-f),\\
e^\prime&=&\frac{1}{2}(e+f+ih),\\
f^\prime&=&\frac{1}{2}(e+f-ih).
\end{eqnarray*}
The triple $\{h^\prime,e^\prime,f^\prime\}$ is the {\it Cayley transform} of $\{h,e,f\}$. Note that $h^\prime$ lies in $\fk$, while $e^\prime$ and $f^\prime$ lie in $\fp$. Such a standard triple is called {\it normal}. The Kostant--Sekiguchi correspondence is the bijection
\begin{equation}\label{KS}
\KS: G_{\mathbb R}\cdot e\mapsto K\cdot e^\prime
\end{equation}
between nilpotent $G_{\mathbb R}$-orbits of $\fg_{\mathbb R}$ and nilpotent $K$-orbits of $\fp$ (\cite[Chapter 9]{CM}). We will use the following result due to Mal'cev about conjugation of standard triples. 
\begin{thm}[Mal'cev, {\cite[\S3.4.12]{CM}}]
\label{malcev} 
Any two standard triples of $\mathfrak{g}$ with the same neutral element $h$ are conjugate by an element of the connected component of the centralizer $Z_G(h)$ of $h$ in $G$.
\end{thm}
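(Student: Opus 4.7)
The plan is to work with the grading $\mathfrak{g}=\bigoplus_{n\in\mathbb{Z}}\mathfrak{g}_n$ by the eigenspaces of $\ad(h)$, so that $\mathfrak{g}_0=\mathrm{Lie}\,Z_G(h)$, $e,e'\in\mathfrak{g}_2$, and $f,f'\in\mathfrak{g}_{-2}$; set $Z:=Z_G(h)^\circ$. The whole proof will hinge on two complementary facts about $\ad(e)$ extracted from the $\mathfrak{sl}_2$-module structure on $\mathfrak{g}$ induced by $\{h,e,f\}$: the map $\ad(e):\mathfrak{g}_{-2}\to\mathfrak{g}_0$ is injective, and $\ad(e):\mathfrak{g}_0\to\mathfrak{g}_2$ is surjective. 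Both assertions follow at once from the standard decomposition $\mathfrak{g}=\bigoplus V(n_i)^{m_i}$ into irreducible $\mathfrak{sl}_2$-submodules: on each summand $V(n)$ the weight-$(-2)$ space injects into the weight-$0$ space under $\ad(e)$, and on each summand with $n\geq 2$ every weight-$0$ vector is sent to the nonzero weight-$2$ vector.

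Injectivity yields the \emph{rigidity of $f$:} any $\widetilde f\in\mathfrak{g}_{-2}$ with $\{h,e,\widetilde f\}$ a standard triple satisfies $\widetilde f-f\in\mathfrak{g}_{-2}\cap\ker\ad(e)=0$. Consequently, once we produce $g\in Z$ with $\Ad(g)e=e'$, the element $\Ad(g)f$ completes the pair $(h,e')$ to a standard triple and must therefore equal $f'$. So the problem reduces to conjugating $e$ to $e'$ by an element of $Z$. Surjectivity, in turn, identifies the map $\mathfrak{g}_0\to\mathfrak{g}_2$, $X\mapsto[X,e]$, with the differential at the identity of the algebraic orbit map $Z\to\mathfrak{g}_2$, $z\mapsto\Ad(z)e$; hence $Z\cdot e$ has dimension $\dim\mathfrak{g}_2$ and is Zariski-open in $\mathfrak{g}_2$. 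The same argument applied to any other standard triple $\{h,e'',f''\}$ shows $Z\cdot e''$ is likewise Zariski-open in $\mathfrak{g}_2$.

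The conclusion follows from a connectedness argument. Let $E=\{e''\in\mathfrak{g}_2:\exists f''\in\mathfrak{g}_{-2},\ \{h,e'',f''\}\text{ is a standard triple}\}$; by construction $e,e'\in E$, and $E$ is $Z$-stable. From the previous paragraph $E=\bigcup_{e''\in E}Z\cdot e''$ is a union of Zariski-open subsets of $\mathfrak{g}_2$, so $E$ is Zariski-open in the irreducible affine variety $\mathfrak{g}_2$, hence itself irreducible. But $E$ is also the disjoint union of the $Z$-orbits it contains, each of which is Zariski-open in $E$; irreducibility forces this disjoint open cover to consist of a single piece, so $E=Z\cdot e$. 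In particular $e'\in Z\cdot e$, and rigidity of $f$ then upgrades the conjugation of $e$ into a conjugation of the full triple.

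The step I expect to require the most care is the bookkeeping behind the twin $\mathfrak{sl}_2$-assertions on $\ad(e)$: each is elementary, but the whole argument pivots on pairing precisely the right injectivity ($\mathfrak{g}_{-2}\to\mathfrak{g}_0$, to pin down $f$) with precisely the right surjectivity ($\mathfrak{g}_0\to\mathfrak{g}_2$, to open the $Z$-orbit), and these in turn depend on tracking which weights actually appear in each irreducible summand of $\mathfrak{g}$. The only genuinely nontrivial input from outside $\mathfrak{sl}_2$-representation theory is the geometric remark that a Zariski-open subset of an irreducible affine space cannot split as a disjoint union of two nonempty opens, which is what converts the infinitesimal openness of $Z$-orbits into global transitivity of $Z$ on $E$.
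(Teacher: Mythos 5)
Your proof is correct, and since the paper does not prove Theorem \ref{malcev} itself but quotes it from \cite[\S 3.4.12]{CM}, the relevant comparison is with that source: your argument is essentially the standard one given there, namely rigidity of the nilnegative element via injectivity of $\ad(e)$ on $\mathfrak{g}_{-2}$, Zariski-openness of each $Z_G(h)^\circ$-orbit of a nilpositive element in $\mathfrak{g}_2$ via surjectivity of $\ad(e):\mathfrak{g}_0\to\mathfrak{g}_2$, and irreducibility of $\mathfrak{g}_2$ to force all such orbits to coincide. No gaps; the $\mathfrak{sl}_2$-weight bookkeeping and the openness-plus-irreducibility step are both handled correctly.
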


Fix a standard triple $\{h,e,f\}$ in $\fg$. 
The eigenvalues of $\ad(h)$ are integers, therefore one gets the following grading of $\fg$:
\begin{equation*}
\fg=\bigoplus_{m\in\mathbb{Z}}\fg_m,\quad \fg_m:=\{X\in\fg\mid [h,X]=mX\}.
\end{equation*}
Set $\fl=\fg_0$ and $\fu=\bigoplus\limits_{m>0}\fg_m$. The Lie subalgebra 
\begin{equation}\label{parabolic}
\fq=\fl\oplus\fu
\end{equation}
is a {\it parabolic subalgebra} associated with the nilpotent element $e$. 
Note that $\mathfrak{q}$ is $\theta$-stable when $\{h,e,f\}$ is a Cayley triple in $\mathfrak{g}_\mathbb{R}$.
The corresponding parabolic subgroup $Q$ in $G$ has Levi decomposition $Q=LU$, where $L$ (resp., $U$) has Lie algebra ${\mathfrak l}$ (resp., ${\mathfrak u}$). In the case where $\fg_{2j+1}=\{0\}$ for all $j\in{\mathbb Z}$, the nilpotent element $e$ (or the standard triple $\{h,e,f\}$) is said to be {\it even}. In other words, $\fg$ decomposes as the direct sum of irreducible representations of the subalgebra spanned by the ${\fs\fl}_2$-triple $\{h,e,f\}$ and all of the summands have even highest weight. In the special case where $\fl$ is a maximal torus $\ft$ in $\fg$, the nilpotent radical $\fu$ coincides with the subspace $\fn$ generated by positive $\ft$-roots in $\fg$, and the parabolic subgroup $Q$ is a Borel subgroup $B=LN$ with Lie algebra $\fb=\ft\oplus\fn$ and $N=\exp(\fn)$.

By assumption, $G$ is a closed subgroup of $\GLnC$ for some integer $n\geq 1$. The nilpotent cone ${\mathcal N}(\glnC)$ is defined as the set of nilpotent elements in $\text{End}({\mathbb C}^n)$. Similarly, the nilpotent cones ${\mathcal N}(\fg)$ and ${\mathcal N}(\fg_{\mathbb R})$ in $\fg$ and $\fg_{\mathbb R}$ are defined, respectively, as the sets of nilpotent elements in $\fg$ and in $\fg_{\mathbb R}$. We have 
$${\mathcal N}(\fg)=\fg\cap {\mathcal N}(\glnC).$$
Since ${\mathcal N}(\glnC)$ is Zariski closed in $\glnC$, then the nilpotent cone ${\mathcal N}(\fg)$ is Zariski closed in $\fg$. On the other hand, the algebra $\mathcal{S}(\fg^*)^G$ of $G$-invariant polynomial functions on $\fg$ is graded by the degree: 
\begin{equation*}
\mathcal{S}(\fg^*)^G=\bigoplus_{r\geq 0} S^r(\fg^*)^G,
\end{equation*}
where $S^r(\fg^*)$ is the space of homogeneous polynomial functions of degree $r$ on $\fg$. 
If $S^+(\fg^*)^G:=\bigoplus\limits_{r>0} S^r(\fg^*)^G$, then 
 \begin{equation*}
{\mathcal N}(\fg)=\big\{x\in\fg\mid f(x)=0\text{ for all }f\in S^+(\fg^*)^G\big\}=G\cdot\fn.
\end{equation*}
In particular, the variety ${\mathcal N}(\fg)$ is irreducible and 
\begin{equation*}
\dim{\mathcal N}(\fg)=2\dim\fn=\dim\fg-\rank\,\fg.
\end{equation*}
Here $\rank\,\fg$ denotes the rank of $\fg$, i.e., the dimension of a Cartan subalgebra in $\fg$. The nilpotent cone ${\mathcal N}(\fg)$ is a finite union of nilpotent $G$-orbits. There are several nilpotent orbits of particular interest in ${\mathcal N}(\fg)$ \cite[Chapters 4 and 7]{CM}. First, the nilpotent cone coincides with the Zariski closure of a unique nilpotent $G$-orbit ${\mathcal O}_{\mathrm{reg}}$ which is open and dense in ${\mathcal N}(\fg)$:
\begin{equation*}
{\mathcal N}(\fg)=\overline{{\mathcal O}_{\mathrm{reg}}}.
\end{equation*}
It is known as the {\it regular} or {\it principal nilpotent orbit}, and it is the largest nilpotent $G$-orbit in ${\mathcal N}(\fg)$. Elements in ${\mathcal O}_{\mathrm{reg}}$ will be called {\it regular nilpotent}. Second, since $\fg$ is simple, there exists a unique nilpotent $G$-orbit ${\mathcal O}_{\mathrm{subreg}}$ of dimension $\dim {\mathcal O}_{\mathrm{reg}}-2=\dim\fg-\rank\,\fg-2$ in ${\mathcal N}(\fg)$. The orbit ${\mathcal O}_{\mathrm{subreg}}$, known as the {\it subregular nilpotent orbit}, is open and dense in 
${\mathcal N}(\fg)\backslash{\mathcal O}_{\mathrm{reg}}$. Third, there exists a nonzero nilpotent $G$-orbit ${\mathcal O}_{\mathrm{min}}$ in ${\mathcal N}(\fg)$ which is contained in the Zariski closure of any nonzero nilpotent orbit. The orbit ${\mathcal O}_{\mathrm{min}}$ has minimal dimension and is known as the {\it minimal nilpotent orbit}. 

Yet there is another particular nilpotent orbit in ${\mathfrak g}$, induced from any parabolic subalgebra ${\mathfrak q}$ in $\mathfrak{g}$. Namely, the {\it Richardson orbit} ${\mathcal O}_{\mathrm{Rich}}({\mathfrak q})$ is the unique dense orbit in $G\cdot {\mathfrak u}$, where ${\mathfrak u}$ denotes the nilradical of $\mathfrak{q}$. Note that any semisimple element $x\in\mathfrak{g}$ also gives rise to a Richardson orbit.
Indeed, if $x$ is semisimple, then its centralizer $\mathfrak{g}^x:=\{z\in\mathfrak{g}\mid [x,z]=0\}$ 
is a Levi subalgebra of $\mathfrak{g}$, which means that there is a parabolic subalgebra $\mathfrak{q}$
such that $\mathfrak{g}^x$ is a Levi subalgebra of $\mathfrak{q}$.
The Richardson orbit 
\begin{equation}\label{DefRich}
{\mathcal O}_{\mathrm{Rich}}(x):={\mathcal O}_{\mathrm{Rich}}(\mathfrak{q})
\end{equation}
is independent of the choice of $\mathfrak{q}$ (see, e.g., \cite[Theorem 7.1.3]{CM}). Note also that
\begin{equation}
\label{dim-Rich}
\dim{\mathcal O}_{\mathrm{Rich}}(x)=2\dim\mathfrak{u}=\dim\mathfrak{g}-\dim\mathfrak{g}^x=\dim G\cdot x
\end{equation}
(see \cite[\S7]{CM} for the first equality).
In the case where $x$ has real eigenvalues on $\mathfrak{g}$
(which occurs for instance when $x$ is a hyperbolic semisimple element in $\mathfrak{g}_\mathbb{R}$),
then $\mathfrak{g}$ has an eigenspace decomposition indexed by real numbers
\begin{equation}
\label{grading-hyp}
\mathfrak{g}=\bigoplus_{\mu\in\mathbb{R}}\mathfrak{g}_\mu,\quad\mathfrak{g}_\mu:=\{z\in \mathfrak{g}\mid [x,z]=\mu z\},
\end{equation}
and $\mathfrak{q}:=\bigoplus\limits_{\mu\geq 0}\mathfrak{g}_\mu$ is a parabolic subalgebra which contains $\mathfrak{g}_0=\mathfrak{g}^x$ as a Levi factor; we have
\begin{equation}
\label{Richardson-hyp}
\overline{\mathcal{O}_{\mathrm{Rich}}(x)}=G\cdot \Big(\bigoplus_{\mu>0}\mathfrak{g}_\mu\Big).
\end{equation}
If $x=h$ belongs to an $\mathfrak{sl}_2$-triple $\{h,e,f\}$, then the nilpotent orbit
$G\cdot e$ is contained in the closure of ${\mathcal O}_{\mathrm{Rich}}(h)$.
The equality $\mathcal{O}_{\mathrm{Rich}}(h)=G\cdot e$ holds if and only if $e$ is even.
Any even nilpotent orbit is therefore a Richardson orbit, but the converse is not true. 
Since ${\mathfrak g}$ is simple, the subregular orbit ${\mathcal O}_{\mathrm{subreg}}$ is a Richardson orbit. 
Note that every nilpotent orbit of $\slnC$ is a Richardson orbit. 

Recall that nilpotent $G$-orbits in the classical Lie algebras are in one-to-one correspondence 
with partitions $(d_1,\ldots,d_k)$ with $d_1\geq d_2\geq\ldots\geq
d_k\geq 1$ such that (\cite[Chapter~5]{CM}): 
\begin{itemize}
\item[$\bullet$] $d_{1}+d_{2}+\ldots+d_{k}=n$, when
  ${\mathfrak g}\simeq\slnC$; 
\item[$\bullet$] $d_{1}+d_{2}+\ldots+d_{k}=2n+1$ and the even $d_j$'s
  occur with even multiplicity, when ${\mathfrak g}\simeq\soC{2n+1}$; 
\item[$\bullet$] $d_{1}+d_{2}+\ldots+d_{k}=2n$ and the odd $d_j$'s occur
  with even multiplicity, when ${\mathfrak g}\simeq\spC{2n}$; 
\item[$\bullet$] $d_{1}+d_{2}+\ldots+d_{k}=2n$ and the even $d_j$'s
  occur with even multiplicity, when ${\mathfrak g}\simeq\soC{2n}$;
  except that the partitions having all the
   $d_j$'s even (and occurring with even multiplicity) are each 
  associated to {\em two} orbits. 
\end{itemize}
Finally, the Zariski closure of nilpotent orbits can be described as follows. Given two partitions 
${\bf d}=(d_1,\ldots,d_k)$ and ${\bf f}=(f_1,\ldots,f_\ell)$, ${\bf d}$ is said to 
{\it dominate} ${\bf f}$, i.e., ${\bf f}\leq {\bf d}$, if we have $k\leq\ell$ and
\begin{equation*}
\sum_{1\leq i\leq j}d_i\geq\sum_{1\leq i\leq j}f_i\;\text{ for }1\leq j\leq k.
\end{equation*}
If $\mathcal{O}$ and $\mathcal{O}'$ are nilpotent orbits associated with distinct partitions $\mathbf{d}$ and $\mathbf{d}'$, then $\mathcal{O}$ is contained in the Zariski closure of $\mathcal{O}'$ if and only if $\mathbf{d}\leq\mathbf{d}'$.

\section{Limit of adjoint orbits: definition and basic properties}\label{limitorbit}

In this section we introduce a topological limit of adjoint orbits (Definition \ref{deflim}) and establish its basic properties (Propositions \ref{basicprop}, \ref{P3.5}, and \ref{P-existence}). Recall that $G_\mathbb{R}$ is a connected simple linear real Lie group with Lie algebra $\mathfrak{g}_{\mathbb R}$, $G$ is the complexification of $G_{\mathbb R}$, and its Lie algebra $\mathfrak{g}$ is the complexification of $\mathfrak{g}_\mathbb{R}$. 

\begin{Def}\label{deflim}
Given an element $x\in\mathfrak{g}_\mathbb{R}$, consider the sequence $\{G_\mathbb{R}\cdot(\nu x)\}_{\nu>0}$ of $G_{\mathbb R}$-orbits. The limit of this sequence of orbits is defined as the topological set
$$
\lim_{\nu\to 0^+}G_\mathbb{R}\cdot(\nu x):=\bigcap_{\epsilon>0}\overline{\bigcup_{\nu\in(0,\epsilon)}G_\mathbb{R}\cdot(\nu x)}.
$$
\end{Def}

In other words, an element $z$ belongs to the limit if and only if there are sequences $\{\nu_k\}_{k\geq 1}\subset\mathbb{R}_+^*$ converging to $0$ and $\{g_k\}_{k\geq 1}\subset G_\mathbb{R}$ such that 
\[z=\lim_{k\to+\infty} \nu_k\mathrm{Ad}(g_k)(x).\]

\begin{pro}\label{basicprop}
Let $x\in{\mathfrak g}_{\mathbb R}$ with Jordan decomposition $x=x_s+x_n$.
\begin{itemize}
\item[(a)] The limit of orbits $\lim\limits_{\nu\to 0^+}G_\mathbb{R}\cdot(\nu x)$ is nonempty, closed, $G_{\mathbb R}$-stable and contained in the nilpotent cone  of ${\mathfrak g}_{\mathbb R}$:
$$\lim_{\nu\to 0^+}G_\mathbb{R}\cdot(\nu x)\subset {\mathcal N}(\fg_{\mathbb R}).$$
In particular, the limit of orbits $\lim\limits_{\nu\to 0^+}G_\mathbb{R}\cdot(\nu x)$ is a finite union of nilpotent orbits.
\item[(b)] The nilpotent part $x_n$ belongs to the limit of orbits $\lim\limits_{\nu\to 0^+}G_\mathbb{R}\cdot(\nu x)$. In particular, $\lim\limits_{\nu\to 0^+}G_\mathbb{R}\cdot(\nu x)$ always contains the closure of $G_{\mathbb R}\cdot x_n$:
$$\overline{G_{\mathbb R}\cdot x_n}\subset\lim_{\nu\to 0^+}G_\mathbb{R}\cdot(\nu x).$$
\item[(c)] The limit of orbits of the semisimple part $x_s$ is contained in the limit of orbits of $x$:
$$
\lim_{\nu\to 0^+}G_\mathbb{R}\cdot(\nu x_s)\subset \lim_{\nu\to 0^+}G_\mathbb{R}\cdot(\nu x).
$$
\item[(d)] If $x=x_n$ is a nilpotent element then the limit of orbits $\lim\limits_{\nu\to 0^+}G_\mathbb{R}\cdot(\nu x)$ coincides with the closure of the $G_{\mathbb R}$-orbit of $x$:
$$\lim_{\nu\to 0^+}G_\mathbb{R}\cdot(\nu x)=\overline{G_{\mathbb R}\cdot x}.$$
\end{itemize}
\end{pro}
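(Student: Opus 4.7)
For part (a), I would first observe that the limit is an intersection of closed $G_\mathbb{R}$-stable sets, hence closed and $G_\mathbb{R}$-stable; the constant sequence $g_k=e$ with $\nu_k\to 0^+$ yields $\nu_k x\to 0$, so $0$ lies in the limit and nonemptiness is automatic. To place the limit inside the nilpotent cone, I would evaluate any $G$-invariant homogeneous polynomial $p\in S^r(\mathfrak{g}^*)^G$ with $r>0$ on a limit point $z=\lim_k\nu_k\Ad(g_k)(x)$: since $p(\nu_k\Ad(g_k)(x))=\nu_k^r p(x)\to 0$, one obtains $p(z)=0$, and by the characterization of $\mathcal{N}(\mathfrak{g})$ recalled in Section \ref{preliminaries} this forces $z\in\mathcal{N}(\mathfrak{g})\cap\mathfrak{g}_\mathbb{R}=\mathcal{N}(\mathfrak{g}_\mathbb{R})$. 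Since $\mathcal{N}(\mathfrak{g}_\mathbb{R})$ contains only finitely many $G_\mathbb{R}$-orbits, any $G_\mathbb{R}$-stable subset of it is automatically a finite union of such orbits.

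For (b) and (c), the key device is a real $\mathfrak{sl}_2$-triple attached to $x_n$ inside the centralizer $\mathfrak{g}_\mathbb{R}^{x_s}$, which is reductive. Assuming $x_n\neq 0$ (the opposite case being trivial), Jacobson--Morozov applied in $\mathfrak{g}_\mathbb{R}^{x_s}$ produces a triple $\{h,x_n,f\}\subset\mathfrak{g}_\mathbb{R}^{x_s}$, so that $h$ commutes with $x_s$ while $[h,x_n]=2x_n$. Consequently
\[
\Ad(\exp(th))(x)=x_s+e^{2t}x_n,
\]
and multiplying by $\nu_t:=e^{-2t}$ yields a curve $e^{-2t}x_s+x_n$ that converges to $x_n$ as $t\to+\infty$, with $\nu_t\to 0^+$. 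This proves (b), and from closedness and $G_\mathbb{R}$-invariance the closure $\overline{G_\mathbb{R}\cdot x_n}$ lies in the limit. For (c), given $z=\lim_k \nu_k\Ad(g_k)(x_s)$, I would replace $g_k$ by $g_k\exp(s_k h)$ with $s_k$ chosen negative enough so that $\nu_k e^{2s_k}\|\Ad(g_k)x_n\|\to 0$ (possible since $s_k$ is free). Then
\[
\nu_k\Ad(g_k\exp(s_k h))(x)=\nu_k\Ad(g_k)(x_s)+\nu_k e^{2s_k}\Ad(g_k)(x_n)\to z,
\]
placing $z$ in $\lim_{\nu\to 0^+}G_\mathbb{R}\cdot(\nu x)$.

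Part (d) is essentially immediate from Jacobson--Morozov applied directly to $x$: for a nonzero nilpotent $x\in\mathfrak{g}_\mathbb{R}$ inside a real triple $\{h,x,f\}$, one has $\Ad(\exp(sh))(x)=e^{2s}x$, so every positive scalar multiple $\nu x$ equals $\Ad(\exp(\tfrac{1}{2}(\log\nu)h))(x)$ and is therefore $G_\mathbb{R}$-conjugate to $x$. Hence $G_\mathbb{R}\cdot(\nu x)=G_\mathbb{R}\cdot x$ for every $\nu>0$, the union $\bigcup_{\nu\in(0,\epsilon)}G_\mathbb{R}\cdot(\nu x)$ is constant equal to $G_\mathbb{R}\cdot x$, and the limit collapses to $\overline{G_\mathbb{R}\cdot x}$. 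The genuine technicality will lie in (c): the quantity $\Ad(g_k)(x_n)$ is not controlled a priori (the $g_k$ may send $x_n$ outside any bounded set), so one must use the commuting $\mathfrak{sl}_2$ inside $\mathfrak{g}_\mathbb{R}^{x_s}$ to rescale the nilpotent component without perturbing the semisimple part, and calibrate $s_k$ against the possibly unbounded $\|\Ad(g_k)x_n\|$. Once this bookkeeping is settled, all four assertions follow from elementary manipulations.
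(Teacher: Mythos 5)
Your proof is correct, and for parts (b)--(d) it is essentially the paper's argument: both hinge on Jacobson--Morozov applied inside the (reductive) centralizer of $x_s$, producing an element that commutes with $x_s$ and scales $x_n$. The packaging differs slightly. The paper first records the single orbit identity $G_\mathbb{R}\cdot(x_s+t x_n)=G_\mathbb{R}\cdot(x_s+x_n)$ for all $t>0$ and then reads off (b), (c), (d) from it; in particular, for (c) it fixes $\nu$ and concludes $G_\mathbb{R}\cdot(\nu x_s)\subset\overline{G_\mathbb{R}\cdot(\nu x)}$ from $\nu x_s+\tfrac1k x_n\in G_\mathbb{R}\cdot(\nu x)$, which makes the inclusion of limits formal and avoids any calibration against the possibly unbounded $\|\mathrm{Ad}(g_k)x_n\|$. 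Your in-sequence rescaling with $s_k$ chosen against $\nu_k\|\mathrm{Ad}(g_k)x_n\|$ is a correct alternative, just with more bookkeeping. For (a), you argue via Kostant's characterization $\mathcal{N}(\mathfrak{g})=\{z\mid f(z)=0\ \forall f\in S^+(\mathfrak{g}^*)^G\}$ (using $G$-invariance and positive-degree homogeneity of $f$), whereas the paper passes to the limit of characteristic polynomials of $\nu_k\,\mathrm{ad}(x)$; both are elementary and equally valid, the paper's being self-contained at the level of linear algebra while yours leans on the invariant-theoretic description already recalled in Section 2. Two trivial omissions worth a word in a final write-up: the case $x=0$ in (d), and the fact that Jacobson--Morozov is applied in the semisimple part $[\mathfrak{l},\mathfrak{l}]$ of the centralizer (as the paper states), though this does not affect your argument.
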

\begin{proof} (a) By definition $\lim\limits_{\nu\to 0^+}G_{\mathbb R}\cdot(\nu x)$ is closed and $G_{\mathbb R}$-stable,
and nonempty (since it contains $0$). Let $z\in\lim\limits_{\nu\to 0^+}G_\mathbb{R}\cdot(\nu x)$ and let us show that $z$ is nilpotent in ${\mathfrak g}_{\mathbb R}$. 
There are sequences $\{\nu_k\}_{k\geq 1}\subset\mathbb{R}_+^*$ converging to $0$ and $\{g_k\}_{k\geq 1}\subset G_\mathbb{R}$
such that 
\[
z=\lim_{k\to+\infty}\nu_k \mathrm{Ad}(g_k)(x),
\]
hence
\[
\mathrm{ad}(z)=\lim_{k\to+\infty}\nu_k \mathrm{Ad}(g_k)\circ\mathrm{ad}(x)\circ\mathrm{Ad}(g_k)^{-1}.
\]
This yields the following relation between characteristic polynomials:
\[
\det(\mathrm{ad}(z)-X\mathrm{id})=\lim_{k\to+\infty}\det(\nu_k\mathrm{ad}(x)-X\mathrm{id}),
\]
from which we conclude that the endomorphism $\mathrm{ad}(z)$ is nilpotent, hence $z$ is a nilpotent element.
This shows (a).

To show (b), (c), and (d), it is useful to note that
\begin{equation} \label{jordanorbit}
G_\mathbb{R}\cdot(x_s+tx_n)=G_\mathbb{R}\cdot(x_s+x_n)\quad\mbox{for all $t\in\mathbb{R}_+^*$.}
\end{equation}
Indeed, the Jacobson--Morozov theorem implies that we can find an element $x'$ in the semisimple part $[\mathfrak{l},\mathfrak{l}]$
of the Levi subalgebra $\mathfrak{l}=\mathfrak{z}_{\mathfrak{g}_\mathbb{R}}(x_s):=\{z\in\mathfrak{g}_\mathbb{R}\mid [x_s,z]=0\}$ such that $[x',x_n]=2x_n$. Then, for every $t>0$ we get
\[
\textstyle
x_s+tx_n=\mathrm{Ad}(\exp(\frac{\ln t}{2}x'))(x_s+x_n)\in G_\mathbb{R}\cdot(x_s+x_n).
\]

(b) By (\ref{jordanorbit}), we have $x_n+\nu x_s\in G_\mathbb{R}\cdot (\nu x)$ for all $\nu>0$,
hence $x_n\in\lim\limits_{\nu\to 0^+}G_\mathbb{R}\cdot(\nu x)$, and the desired inclusion
follows from the properties of the limit given in (a).

(c) By (\ref{jordanorbit}), for every $\nu>0$, we have
\[
\nu x_s+\frac{1}{k} x_n\in G_\mathbb{R}\cdot (\nu x)\subset\overline{G_\mathbb{R}\cdot(\nu x)}\quad\mbox{for all $k\geq 1$,}
\]
hence
\[
G_\mathbb{R}\cdot(\nu x_s)\subset \overline{G_\mathbb{R}\cdot(\nu x)},
\]
so that the claimed inclusion follows from the definition of the limit.

(d) By (\ref{jordanorbit}), we have $G_\mathbb{R}\cdot x=G_\mathbb{R}\cdot(\nu x)$ for all $\nu>0$, and
the claimed equality immediately follows from the definition of the limit.
\end{proof}

\begin{ex}
\label{E1}
The elements of the Lie algebra $\mathfrak{g}_\mathbb{R}=\slR{2}$ can be written in the form
\[x=\begin{pmatrix} x_1 & x_2+x_3 \\ x_2-x_3 & -x_1\end{pmatrix}\quad\mbox{with $x_1,x_2,x_3\in\mathbb{R}$}.\]
The nilpotent cone is
\[
\mathcal{N}(\slR{2})
=
\{x\in\slR{2}\mid \det x=0\}=
\{x\in\slR{2}\mid x_1^2+x_2^2=x_3^2\}
\]
and it comprises two nontrivial nilpotent orbits,
\begin{eqnarray*}
\mathfrak{O}^+&:=&\{x\in\slR{2}\mid x_1^2+x_2^2=x_3^2,\ x_3>0\}=\mathrm{SL}_2(\mathbb{R})\cdot\begin{pmatrix} 0 & 1 \\ 0 & 0 \end{pmatrix},\\
\mathfrak{O}^-&:=&\{x\in\slR{2}\mid x_1^2+x_2^2=x_3^2,\ x_3<0\}=\mathrm{SL}_2(\mathbb{R})\cdot\begin{pmatrix} 0 & 0 \\ 1 & 0 \end{pmatrix}.
\end{eqnarray*}

We consider the elliptic semisimple element
\[
e:=\begin{pmatrix}
0 & 1 \\ -1 & 0
\end{pmatrix}.
\]
In fact, every elliptic semisimple element of $\slR{2}$ is conjugate to $\nu e$ for some $\nu\in\mathbb{R}$.
For $\nu\in\mathbb{R}^*$, we have 
\[
\mathrm{SL}_2(\mathbb{R})\cdot (\nu e)=\{x\in\slR{2}\mid x_3^2=x_1^2+x_2^2+\nu^2,\ \mathrm{sign}(x_3)=\mathrm{sign}(\nu)\}.
\]
This yields
\begin{eqnarray*}
\lim_{\nu\to 0^+}\mathrm{SL}_2(\mathbb{R})\cdot(\nu e) & = & 
\bigcap_{\epsilon>0}\overline{\bigcup_{0<\nu<\epsilon}\mathrm{SL}_2(\mathbb{R})\cdot (\nu e)} \\
 & = & 
\bigcap_{\epsilon>0}\{x\in\slR{2}\mid 0\leq x_3^2-x_1^2-x_2^2\leq \epsilon^2,\ x_3\geq 0\} \\
 & = & 
\{x\in\slR{2}\mid x_3^2=x_1^2+x_2^2,\ x_3\geq 0\} \\
 & = & \{0\}\cup\mathfrak{O}^+ = \overline{\mathfrak{O}^+}.
\end{eqnarray*}
A similar calculation shows that 
\[\lim_{\nu\to 0^+}\mathrm{SL}_2(\mathbb{R})\cdot(\nu (-e))=\{0\}\cup\mathfrak{O}^-=\overline{\mathfrak{O}^-}.\]
We now consider the hyperbolic semisimple element
\[
h:=\begin{pmatrix}
1 & 0 \\ 0 & -1
\end{pmatrix}
\]
and let us note that every hyperbolic semisimple element of $\slR{2}$ is conjugate to $\nu h$ for some $\nu\geq 0$; in particular $h$ and $-h$ are conjugate under $\mathrm{SL}_2(\mathbb{R})$.
For $\nu>0$, we have
\[
\mathrm{SL}_2(\mathbb{R})\cdot(\nu h)=\{x\in\slR{2}\mid x_3^2=x_1^2+x_2^2-\nu^2\},
\]
so that
\[
\lim_{\nu\to 0^+}\mathrm{SL}_2(\mathbb{R})\cdot(\nu h)=\{0\}\cup\mathfrak{O}^+\cup\mathfrak{O}^-=\mathcal{N}(\slR{2}).
\]
This example is illustrated in Figure \ref{F1}.
\begin{figure}[H]
\includegraphics[scale=0.8]{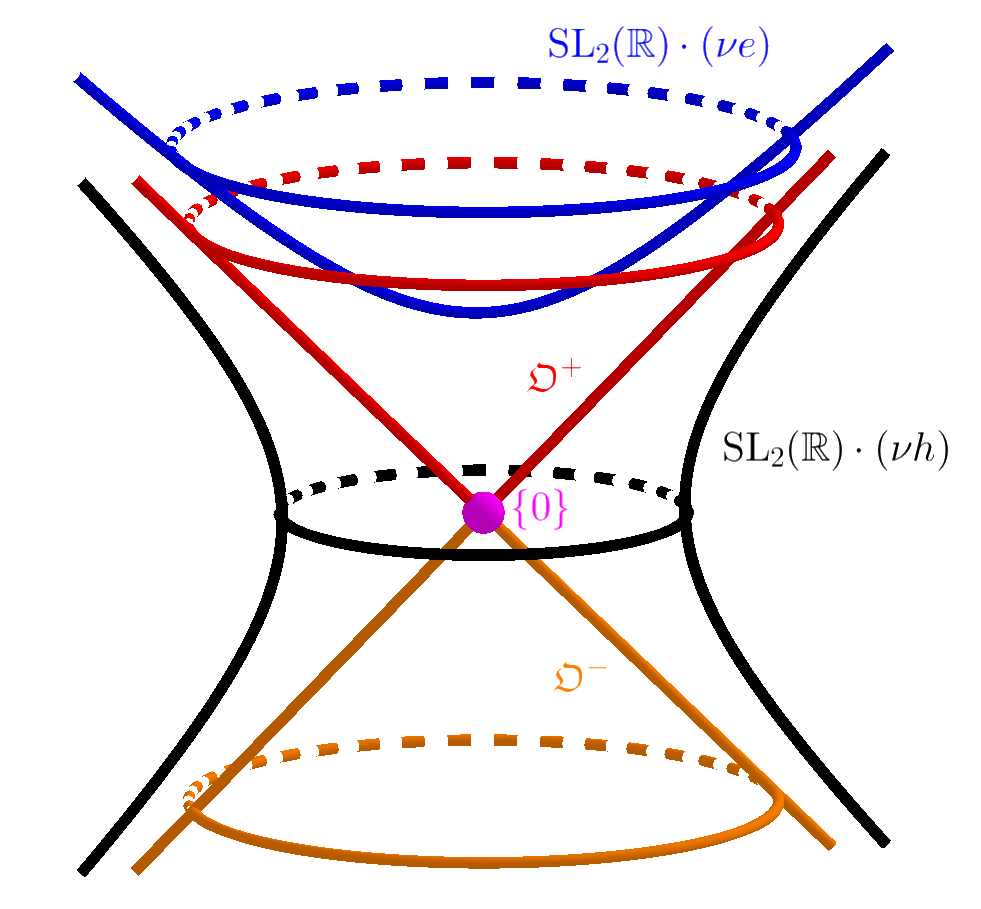}
\caption{The nilpotent cone $\mathcal{N}(\slR{2})$ and the orbits $\mathrm{SL}_2(\mathbb{R})\cdot(\nu e)$ and $\mathrm{SL}_2(\mathbb{R})\cdot(\nu h)$ for $\nu=\sqrt{3}$.}
\label{F1}
\end{figure}
\end{ex}

\begin{rem}
\label{R3.5}
The mapping $\phi:\fg_\mathbb{R}\to\fg_\mathbb{R}$, $x\mapsto -x$ is a $G_\mathbb{R}$-equivariant linear isomorphism, hence it induces a homeomorphism
\[
\lim_{\nu\to 0^+}G_\mathbb{R}\cdot(\nu x)\stackrel{\sim}{\to}\lim_{\nu\to 0^+}G_\mathbb{R}\cdot(\nu (-x)).
\]
Moreover, $\phi$ induces an involution
\[\tilde\phi:\mathcal{N}(\fg_\mathbb{R})/G_\mathbb{R}\to \mathcal{N}(\fg_\mathbb{R})/G_\mathbb{R}\]
of the set of nilpotent orbits, and we get
\[
\lim_{\nu\to 0^+}G_\mathbb{R}\cdot(\nu (-x))=\bigcup \tilde\phi(\mathcal{O}),
\]
where the union is over the set of nilpotent orbits $\mathcal{O}$ contained in $\lim\limits_{\nu\to 0^+}G_\mathbb{R}\cdot(\nu x)$.
For instance, in Example \ref{E1} above, we have $\tilde\phi(\mathfrak{O}^+)=\mathfrak{O}^-$ and of course $\tilde\phi(\{0\})=\{0\}$. In this way the formula for $\lim\limits_{\nu\to 0^+}\mathrm{SL}_2(\mathbb{R})\cdot(\nu(-e))$
can also be deduced from the formula giving $\lim\limits_{\nu\to 0^+}\mathrm{SL}_2(\mathbb{R})\cdot(\nu e)$.
\end{rem}

The following is a characterization of the limit of orbits in terms of the Slodowy slice of a nilpotent element. This characterization appears in \cite[Corollary 2.2]{Harris} (see also the remark below), where it is attributed to Barbasch and Vogan.
For the sake of completeness, we give a proof.

\begin{pro}
\label{P3.5}
Let $x\in\fg_\R$. Let $e\in\fg_\R$ be a nilpotent element and let $\{h,e,f\}\subset\fg_\R$ be a standard triple. Let $\mathfrak{z}_{\fg_\R}(f)=\{z\in\fg_\R \mid [z,f]=0\}$. The affine space $e+\mathfrak{z}_{\fg_\R}(f)$ is called a Slodowy slice. The following conditions are equivalent:
\begin{itemize}
\item[\rm (i)] $e$ belongs to the limit of orbits $\lim\limits_{\nu\to0^+}G_\R\cdot(\nu x)$;
\item[\rm (ii)] $G_\R\cdot x$ intersects the Slodowy slice $e+\mathfrak{z}_{\fg_\R}(f)$.
\end{itemize}
\end{pro}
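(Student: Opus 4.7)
The plan is to combine the transversality of the Slodowy slice $e+\mathfrak{z}_{\fg_\R}(f)$ to the orbit $G_\R\cdot e$ at $e$ with a one-parameter rescaling along $\exp(th)$, exploiting the $\mathfrak{sl}_2$-weight decomposition of $\fg_\R$ furnished by the triple $\{h,e,f\}$.

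I would first record two standard ingredients from $\mathfrak{sl}_2$-representation theory applied to the adjoint action. Let $\fg_\R=\bigoplus_{m\in\Z}\fg_\R^m$ be the $\ad(h)$-weight decomposition. Then one has
\[ \fg_\R = [\fg_\R,e]\oplus\mathfrak{z}_{\fg_\R}(f), \qquad \mathfrak{z}_{\fg_\R}(f)\subset \bigoplus_{m\le 0}\fg_\R^m. \]
The first identity implies that the smooth map $\Phi\colon G_\R\times(e+\mathfrak{z}_{\fg_\R}(f))\to\fg_\R$, $(g,y)\mapsto\Ad(g)(y)$, is a submersion at $(1,e)$ (its differential sends $(X,Z)$ to $[X,e]+Z$, hence is surjective), so by the inverse function theorem the image of $\Phi$ contains an open neighborhood $U$ of $e$ in $\fg_\R$.

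For the implication (ii) $\Rightarrow$ (i), I would take $y=e+z\in G_\R\cdot x$ with $z=\sum_{m\le 0}z_m$, $z_m\in\fg_\R^m$. Setting $g_\nu:=\exp\bigl(-\tfrac{\log\nu}{2}h\bigr)\in G_\R$, the formulas $\Ad(\exp(th))(e)=e^{2t}e$ and $\Ad(\exp(th))(z_m)=e^{tm}z_m$ give
\[ \nu\,\Ad(g_\nu)(y) \;=\; e+\sum_{m\le 0}\nu^{1-m/2}z_m, \]
which converges to $e$ as $\nu\to 0^+$, since $1-m/2\ge 1$ for every $m\le 0$. Because $\nu y$ and $\nu x$ lie in the same $G_\R$-orbit, this shows $e\in\lim\limits_{\nu\to 0^+}G_\R\cdot(\nu x)$.

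For the converse (i) $\Rightarrow$ (ii), assume sequences $\nu_k\to 0^+$ in $\R_+^*$ and $g_k\in G_\R$ satisfying $\nu_k\Ad(g_k)(x)\to e$. For $k$ large enough, $\nu_k\Ad(g_k)(x)\in U$, so I can write
\[ \nu_k\Ad(g_k)(x)=\Ad(h_k)(e+z_k),\quad h_k\in G_\R,\ z_k=\sum_{m\le 0}z_{k,m}\in\mathfrak{z}_{\fg_\R}(f), \]
or equivalently $\Ad(h_k^{-1}g_k)(x)=\nu_k^{-1}(e+z_k)$. Applying $\Ad\bigl(\exp(\tfrac{\log\nu_k}{2}h)\bigr)$ and invoking the same weight computation yields that $x$ is $G_\R$-conjugate to
\[ e+\sum_{m\le 0}\nu_k^{(m-2)/2}z_{k,m}\ \in\ e+\mathfrak{z}_{\fg_\R}(f), \]
so that $G_\R\cdot x\cap(e+\mathfrak{z}_{\fg_\R}(f))\neq\emptyset$. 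I expect the main subtlety to be the bookkeeping ensuring that $\mathfrak{z}_{\fg_\R}(f)$ is genuinely graded by non-positive $\ad(h)$-weights in the real setting; this single observation simultaneously drives the convergence in (ii) $\Rightarrow$ (i) and the fact that the rescaled sums in (i) $\Rightarrow$ (ii) stay inside the Slodowy slice.
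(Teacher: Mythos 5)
Your proposal is correct and follows essentially the same route as the paper: the implication (ii)$\Rightarrow$(i) is the identical $\mathrm{Ad}(\exp(th))$-rescaling using that $\mathfrak{z}_{\fg_\R}(f)$ has non-positive $\ad(h)$-weights, and (i)$\Rightarrow$(ii) uses, as the paper does, that $G_\R\cdot(e+\mathfrak{z}_{\fg_\R}(f))$ is a neighborhood of $e$ (which you justify via the transversality $\fg_\R=[\fg_\R,e]\oplus\mathfrak{z}_{\fg_\R}(f)$ and the submersion theorem, a point the paper simply asserts) followed by the same $\exp(th)$-rescaling back into the slice.
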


\begin{proof}
(ii)$\Rightarrow$(i): We may assume that $x=e+z$ with $z\in\mathfrak{z}_{\fg_\R}(f)$.
It follows from $\mathfrak{sl}_2$-theory that $z$ can be written
as
\[z=\sum_{j=0}^mz_j\quad\mbox{with}\quad [h,z_j]=-jz_j.\] 
Then
\[
\mathrm{Ad}(\exp(th))(x)=\sum_{k\geq 0}\frac{t^k}{k!}\mathrm{ad}(h)^k(e+z)
=\sum_{k\geq 0}\frac{(2t)^k}{k!} e+\sum_{j=0}^m\sum_{k\geq 0}\frac{(-jt)^k}{k!}z_j=\exp(2t)e+\sum_{j=0}^m\exp(-jt)z_{j}
\]
hence, letting $t=-\frac{\ln(\nu)}{2}$, we get
\[
\mathrm{Ad}(\exp(th))(\nu x)=e+\sum_{j=0}^m \nu^{1+\frac{j}{2}}z_j\stackrel{\nu\to 0^+}{\longrightarrow} e.
\]
Therefore, $e\in\lim\limits_{\nu\to 0^+}G_\R\cdot(\nu x)$.

(i)$\Rightarrow$(ii): Assume that $e\in\lim\limits_{\nu\to 0^+}G_\R\cdot(\nu x)$.
This implies that $e\in\overline{\bigcup\limits_{\nu\in(0,1)}G_\R\cdot(\nu x)}$. Note that $G_\R\cdot(e+\mathfrak{z}_{\fg_\R}(f))$ is an open neighborhood of $e$ in $\fg_\R$, hence there are $g\in G_\R$, $\nu\in(0,1)$, and $z\in\mathfrak{z}_{\fg_\R}(f)$ such that 
\[x=\mathrm{Ad}(g)\Big(\frac{1}{\nu}e+z\Big).\]
Letting $t=\frac{\ln\nu}{2}$, we have
\[\mathrm{Ad}(\exp(t h))\Big(\frac{1}{\nu} e\Big)=\frac{1}{\nu}\sum_{k\geq 0}\frac{t^k}{k!}\mathrm{ad}(h)^k(e)=\frac{1}{\nu}\sum_{k\geq 0}\frac{(2t)^k}{k!}e=e\]
while $z':=\mathrm{Ad}(\exp(th))(z)\in\mathfrak{z}_{\fg_\R}(f)$, since $\mathfrak{z}_{\fg_\R}(f)$ is stable by $\mathrm{ad}(h)$. Therefore,
\[
e+z'=(\mathrm{Ad}(\exp(th))\circ\mathrm{Ad}(g^{-1}))(x)\in (G_\R\cdot x)\cap(e+\mathfrak{z}_{\fg_\R}(f)).
\]
This shows (ii).
\end{proof}

\begin{rem}
In \cite[Corollary 2.2]{Harris}, the set 
\[
\mathcal{N}_x:=\mathcal{N}(\fg_\R)\cap\overline{\bigcup_{t>0}G_\R\cdot(tx)}
\]
(or, in fact, its analogue in the dual Lie algebra $\fg_\R^*$) is considered. Note that this set coincides with the limit of orbits:
\begin{equation}\label{3.3-new}
\mathcal{N}_x=\lim_{\nu\to 0^+}G_\R\cdot(\nu x).
\end{equation}
Indeed, the inclusion 
\[\lim_{\nu\to 0^+}G_\R\cdot(\nu x)=\bigcap_{\epsilon>0}\overline{\bigcup_{\nu\in(0,\epsilon)}G_\R\cdot(\nu x)}\subset\overline{\bigcup_{\nu>0}G_\R\cdot(\nu x)}\]
is clear, and since we know that $\lim\limits_{\nu\to 0^+}G_\R\cdot(\nu x)\subset\mathcal{N}(\fg_\R)$ (see Proposition \ref{basicprop}\,(a)), we get the inclusion $\supset$ in (\ref{3.3-new}). For the other inclusion, first note that this inclusion holds in the case where $x$ is nilpotent, because we then have $G_\R\cdot(tx)=G_\R\cdot x$ for all $t>0$, hence 
\[\mathcal{N}_x=\overline{G_\R\cdot x}=\lim_{\nu\to 0^+}G_\R\cdot(\nu x)\]
(see Proposition \ref{basicprop}\,(d)). We now assume that $x$ is not nilpotent.
Let $z\in\mathcal{N}_x$. Then $z$ is nilpotent and there are sequences $\{t_k\}_{k\geq 1}\subset(0,+\infty)$ and $\{g_k\}_{k\geq 1}\subset G_\R$ such that $z=\lim\limits_{k\to+\infty}t_k \mathrm{Ad}(g_k)(x)$.
We claim that the sequence $\{t_k\}_{k\geq 1}$ converges to $0$; in which case we are able to conclude that $z$ belongs to the limit of orbits as explained below Definition \ref{deflim}.
Arguing by contradiction, assume that
along a relabeled subsequence $\{t_k\}_{k\geq 1}$, we have $t_k\to t_0>0$ or 
$t_k\to+\infty$, then this yields $\frac{1}{t_0}z\in\overline{G_\R\cdot x}$ or $0\in\overline{G_\R\cdot x}$, and in both cases we get that $x$ must be nilpotent since $\frac{1}{t_0}z$ and $0$ are nilpotent, a contradiction.

\end{rem}

We conclude this section with the observation that every nilpotent orbit fits into a limit of semisimple orbits. The statement is actually more precise.

\begin{pro} \label{P-existence}
Let $e\in\mathfrak{g}_\mathbb{R}$ be a nilpotent element.
Then there is $x_h\in\mathfrak{g}_\mathbb{R}$ semisimple, hyperbolic
and there is $x_e\in\mathfrak{g}_\mathbb{R}$ semisimple, elliptic
such that
\[
G_\mathbb{R}\cdot e\subset\lim_{\nu\to 0^+}G_\mathbb{R}\cdot(\nu x_h)
\quad\mbox{and}\quad
G_\mathbb{R}\cdot e\subset\lim_{\nu\to 0^+}G_\mathbb{R}\cdot(\nu x_e).
\]
\end{pro}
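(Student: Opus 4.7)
By Proposition \ref{basicprop}(a) both limit sets are closed and $G_\R$-stable, so it suffices to exhibit elements $x_h,x_e\in\fg_\R$ (one hyperbolic semisimple, one elliptic semisimple) such that $e$ itself lies in the corresponding limit set. We may assume $e\neq 0$, the case $e=0$ being trivial. I would apply the Jacobson--Morozov theorem to embed $e$ in a standard triple $\{h,e,f\}\subset\fg_\R$, and, after replacing $\{h,e,f\}$ by a $G_\R$-conjugate (an operation that preserves the orbit $G_\R\cdot e$), I would further assume, as recalled in Section \ref{preliminaries}, that this triple is a Cayley triple, i.e., $\theta(e)=-f$, $\theta(f)=-e$, and $\theta(h)=-h$. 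The two sought-for elements will then be built directly from $h$ and from the pair $(e,f)$.

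For the hyperbolic element I would take $x_h:=h$. Since $\ad(h)$ has integer eigenvalues on $\fg_\R$, it is hyperbolic semisimple. Using the relations $[e,h]=-2e$ and $\ad(e)^2(h)=0$, a direct $\mathfrak{sl}_2$-computation yields
\[
\Ad(\exp(te))(\nu h)=\nu h-2\nu t\,e\quad\text{for all }\nu,t\in\R.
\]
Specializing to $t_\nu:=-\frac{1}{2\nu}$ produces $\Ad(\exp(t_\nu e))(\nu h)=\nu h+e$, which tends to $e$ as $\nu\to 0^+$, proving $e\in\lim\limits_{\nu\to 0^+}G_\R\cdot(\nu h)$ directly from Definition \ref{deflim}.

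For the elliptic element I would take $x_e:=e-f$. The Cayley condition gives $\theta(e-f)=-\theta(f)+\theta(-e)\cdot(-1)=e-f$ (computing $\theta(e-f)=-f-(-e)=e-f$), so $e-f\in\fk_\R$; since $\fk_\R$ is the Lie algebra of the compact group $K_\R$, every element of $\fk_\R$ acts on $\fg$ with purely imaginary eigenvalues, so $x_e$ is indeed semisimple elliptic. To place $e$ in the associated limit I would invoke Proposition \ref{P3.5}: the equality $e-f=e+(-f)$ together with $[-f,f]=0$ shows that $x_e$ itself belongs to the Slodowy slice $e+\mathfrak{z}_{\fg_\R}(f)$, so $G_\R\cdot x_e$ trivially meets that slice, and Proposition \ref{P3.5} yields $e\in\lim\limits_{\nu\to 0^+}G_\R\cdot(\nu x_e)$.

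The hyperbolic case reduces to a single-line $\mathfrak{sl}_2$-computation and presents no real obstacle. The main step of the plan is the elliptic case, because producing a genuinely elliptic candidate is a nontrivial constraint: the key idea is to pass to a Cayley representative of the orbit $G_\R\cdot e$, at which point $e-f$ is visibly $\theta$-fixed and hence compact (therefore elliptic), and it simultaneously sits tautologically in the Slodowy slice of $e$, so that Proposition \ref{P3.5} does the topological work for free.
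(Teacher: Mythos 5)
Your proposal is correct and follows essentially the same route as the paper: both take $x_h=h$ with the flow $\mathrm{Ad}(\exp(-\tfrac{1}{2\nu}e))(\nu h)=\nu h+e$, and both pass to a Cayley triple to get the elliptic element $x_e=e-f\in\mathfrak{k}_\mathbb{R}$, concluding via $G_\mathbb{R}$-stability of the limit set. The only difference is cosmetic: where the paper runs the explicit one-parameter computation $\mathrm{Ad}(\exp(th))(\nu(e-f))=\nu(\e^{2t}e-\e^{-2t}f)$ with $t=-\tfrac{\ln\nu}{2}$, you invoke Proposition \ref{P3.5} with the tautological slice membership $e-f\in e+\mathfrak{z}_{\mathfrak{g}_\mathbb{R}}(f)$, which is legitimate (that proposition precedes this one and its proof is exactly that computation).
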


\begin{proof}
Let $\{h,e,f\}\subset\mathfrak{g}_\mathbb{R}$ be an $\mathfrak{sl}_2$-triple which contains $e$.
Since $\mathrm{ad}(h)$ has integer eigenvalues in $\mathfrak{g}_\mathbb{R}$, the element $x_h:=h$
is hyperbolic. We have
\[
\mathrm{Ad}(\exp({\textstyle -\frac{1}{2\nu}e}))(\nu x_h)=\nu x_h+e\longrightarrow e\quad\mbox{as $\nu\to 0$},
\]
hence $e$ belongs to the limit $\lim\limits_{\nu\to 0^+}G_\mathbb{R}\cdot(\nu x)$,
whence $G_\mathbb{R}\cdot e\subset \lim\limits_{\nu\to 0^+}G_\mathbb{R}\cdot(\nu x)$ (see
Proposition \ref{basicprop}\,{\rm (a)}).

Up to $G_\mathbb{R}$-conjugation, we may assume that $\{h,e,f\}$ is a Cayley triple. 
Then, the element $x_e:=e-f$ belongs to $\mathfrak{k}_\mathbb{R}$, so it is semisimple and elliptic, and we have
\[
\mathrm{Ad}(\exp(th))(\nu x_e)=\nu(\exp(2t)e-\exp(-2t)f)\quad\mbox{for all $t\in\mathbb{R}$},
\]
hence if we take $t=\frac{-\ln\nu }{2}$
we conclude that $e\in \lim\limits_{\nu\to 0^+}G_\mathbb{R}\cdot(\nu x_e)$,
and therefore $G_\mathbb{R}\cdot e\subset \lim\limits_{\nu\to 0^+}G_\mathbb{R}\cdot(\nu x_e)$.
\end{proof}

\section{Asymptotic cones}\label{cones}

In this section, we define and relate various (complex or real) asymptotic cones that can be attached
to a subset $X$ of a (complex or real) vector space $V$ (Section \ref{section-5.1}). 
In the case where $X$ is the adjoint orbit of a semisimple element in the complex Lie algebra $\mathfrak{g}$,
we recall a result due to Borho and Kraft, which describes the asymptotic cone as the closure
of the Richardson orbit (Theorem \ref{TBK}).
In the case where $X$ is an adjoint orbit in the real Lie algebra $\mathfrak{g}_\mathbb{R}$,
the limit of orbits of Definition \ref{deflim} can be described as an asymptotic cone, and useful information can be deduced
from the result of Borho and Kraft (Section \ref{section-5.3}).

\subsection{General definitions of asymptotic cones}\label{section-5.1}

In this section, $V$ denotes a finite-dimensional complex vector space and $V_\mathbb{R}$ is a real vector space
realized as a real form of $V$.

We consider the extension $V\oplus\mathbb{C}$ of $V$. Both projective spaces $\mathbb{P}(V)$ and 
$\mathbb{P}(V\oplus\mathbb{C})$ are equipped with the Zariski topology. Consider the maps 
\begin{eqnarray*}
&&\iota:V\to\mathbb{P}(V\oplus\mathbb{C}),\ v\mapsto [v\oplus 1],\\
&&\kappa:\mathbb{P}(V)\to \mathbb{P}(V\oplus\mathbb{C}),\ [v]\mapsto[v\oplus 0],\\
&&\pi:V\setminus\{0\}\to\mathbb{P}(V),\; v\mapsto[v].
\end{eqnarray*}

\begin{Def} \label{complexAC}
For every subset $X\subset V$,
the {\em projective asymptotic cone for $X$} is defined as
$$\AC^\mathrm{p}(X)=\overline{\iota(X)}\cap\kappa(\mathbb{P}(V))$$
and the (affine) {\em asymptotic cone} is then the following closed cone in $V$:
\begin{equation*}
\AC(X)=\{0\}\cup\pi^{-1}(\kappa^{-1}(\AC^\mathrm{p}(X))).
\end{equation*}
\end{Def}

We adapt this construction to the real setting in the following way.
We consider the sphere 
\begin{equation*}
\mathbb{S}(V_\mathbb{R}):=(V_\mathbb{R}\setminus\{0\})/\mathbb{R}_+^*,
\end{equation*}
the standard embedding 
$$\iota^+:V_\mathbb{R}\hookrightarrow \mathbb{S}(V_\mathbb{R}\oplus\mathbb{R}),\; v\mapsto [v\oplus 1],$$ 
and the surjection 
$$\pi^+:V_\mathbb{R}\setminus\{0\}\to\mathbb{S}(V_\mathbb{R}).$$

\begin{Def}\label{realAC}
For a subset $X\subset V_\mathbb{R}$, we define the real asymptotic cone to be
\[
\AC_\mathbb{R}(X):=\{0\}\cup(\pi^+)^{-1}(\mathbb{S}(V_\mathbb{R})\cap\overline{\iota^+(X)}).
\]
Here we consider the analytic topology on
$V_\mathbb{R}\oplus\mathbb{R}$ and
the quotient topology on
$\mathbb{S}(V_\mathbb{R}\oplus\mathbb{R})$.
\end{Def}

\begin{rem}
The construction of the cone $\AC_\mathbb{R}(X)$ can also be described in the following explicit way.
\begin{itemize}
\item Consider $V_\mathbb{R}$ and $V_\mathbb{R}\oplus \mathbb{R}$ endowed with Euclidean norms,
so that $\mathbb{S}(V_\mathbb{R})$ and $\mathbb{S}(V_\mathbb{R}\oplus\mathbb{R})$ coincide with the usual spheres.
\item Note that $V_\mathbb{R}$ and $V_\mathbb{R}\oplus 1:=\{v\oplus 1\mid v\in V_\mathbb{R}\}$ are two parallel hyperplanes in $V_\mathbb{R}\oplus\mathbb{R}$. Project the subset $X\oplus 1\subset V_\mathbb{R}\oplus 1$ onto the sphere $\mathbb{S}(V_\mathbb{R}\oplus\mathbb{R})$ and consider the closure of this projection.
\item Intersect this closure with the hyperplane $V_\mathbb{R}$, and take the (nonnegative) cone of $V_\mathbb{R}$ determined by this intersection.
\end{itemize}
Figure \ref{F2} illustrates the construction in the case where $X$ is a conic.
\begin{figure}[H]
\includegraphics{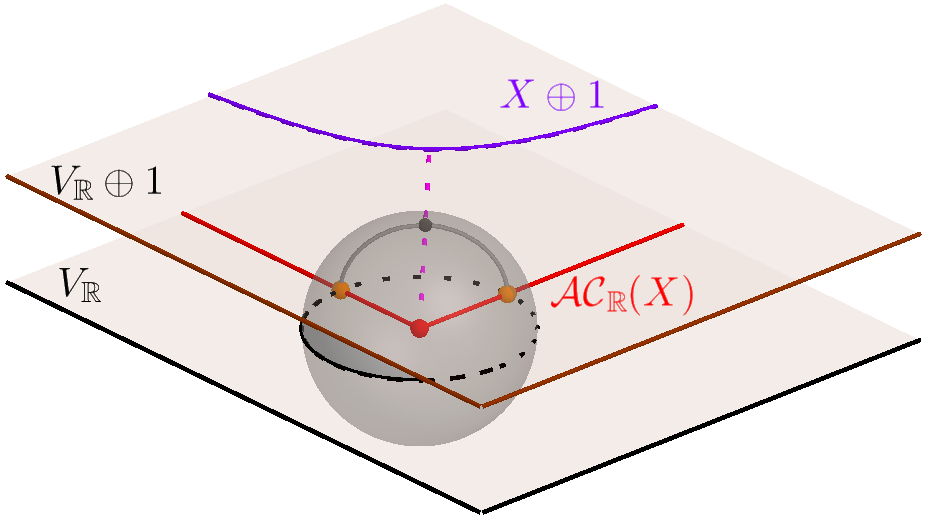}
\includegraphics{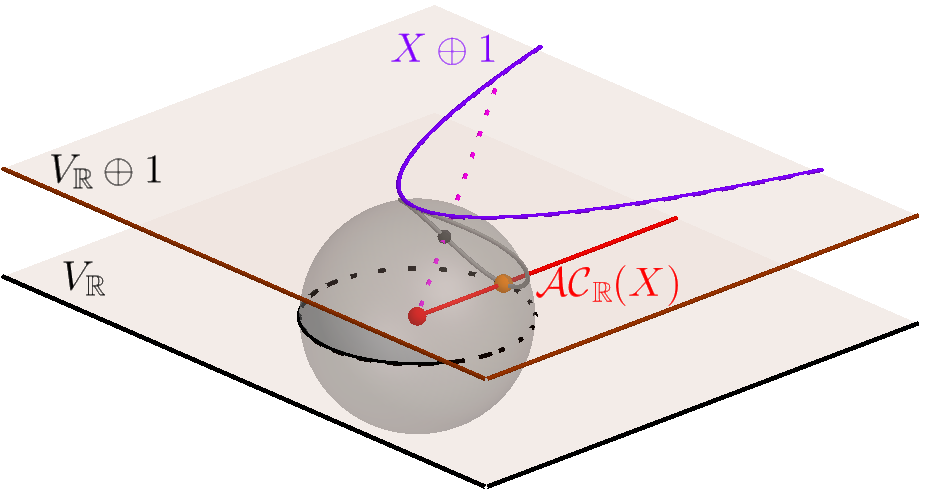}
\caption{The asymptotic cone of a hyperbola and of a parabola.}
\label{F2}
\end{figure}
\end{rem}

We point out a relation between the real and complex asymptotic cones, as well as
additional characterizations of the real cone. In particular, we shall see that $\AC_{\mathbb R}(X)$ coincides with the asymptotic cone $\ACC(X)$ introduced by Harris--He--\'Olafsson in \cite[Introduction]{HHO}:
\[
\ACC(X):=\{z\in V_{\mathbb R}\mid \Gamma\text{ an open cone containing $z$ }\Rightarrow \Gamma\cap X\text{ is unbounded}\}\cup\{0\}.
\]
\begin{pro}
\label{P-cones}
\begin{itemize}
\item[\rm (a)] For every subset $X\subset V_\mathbb{R}$, we have
\[
\AC_\mathbb{R}(X)\subset \AC(X).
\]
\item[\rm (b)] Let $X\subset V_\mathbb{R}$.
For every $z\in V_\mathbb{R}\setminus\{0\}$, the following conditions are equivalent:
\begin{itemize}
\item[\rm (i)] $z\in \AC_\mathbb{R}(X)$; 
\item[\rm (ii)] there are sequences $\{\nu_n\}_{n\geq 1}\subset\mathbb{R}_+^*$ converging to $0$
and $\{x_n\}_{n\geq 1}\subset X$ such that
$\displaystyle z=\lim_{n\to +\infty} \nu_n x_n$;
\item[\rm (iii)] for every open (positive) cone $\Gamma\subset V_\mathbb{R}$ which contains $z$, the intersection $\Gamma\cap X$ is unbounded.
\end{itemize}
In particular, $\AC_{\mathbb R}(X)$ and $\ACC(X)$ coincide:
\[
\ACC(X)=\AC_{\mathbb R}(X).
\]
\end{itemize}
\end{pro}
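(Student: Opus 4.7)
The core is the equivalence (i)$\Leftrightarrow$(ii)$\Leftrightarrow$(iii) in part (b); part (a) and the identification $\ACC(X)=\AC_\mathbb{R}(X)$ will follow as immediate consequences. My plan is to fix an auxiliary Euclidean norm $\|\cdot\|$ on $V_\mathbb{R}\oplus\mathbb{R}$ so that $\mathbb{S}(V_\mathbb{R}\oplus\mathbb{R})$ is realized as the ordinary unit sphere and the quotient map $\pi^+$ takes the concrete form $v\mapsto v/\|v\|$. The statement is independent of this choice, so this loses no generality.

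For (i)$\Leftrightarrow$(ii), the point $[z\oplus 0]$ corresponds to $(z/\|z\|,0)$ on the unit sphere, while $\iota^+(x)$ corresponds to $(x,1)/\sqrt{\|x\|^2+1}$. Hence $z\in\AC_\mathbb{R}(X)\setminus\{0\}$ is equivalent to the existence of $x_n\in X$ with
\[
\frac{(x_n,1)}{\sqrt{\|x_n\|^2+1}}\longrightarrow\frac{(z,0)}{\|z\|};
\]
convergence of the last coordinate to $0$ forces $\|x_n\|\to+\infty$, and setting $\nu_n:=\|z\|/\sqrt{\|x_n\|^2+1}$ produces a sequence $\nu_n\to 0^+$ with $\nu_n x_n\to z$, which is (ii). Conversely, given $\nu_n x_n\to z\neq 0$ with $\nu_n\to 0^+$, the vector $(\nu_n x_n,\nu_n)$ tends to $(z,0)\neq 0$ in $V_\mathbb{R}\oplus\mathbb{R}$; since $[x_n\oplus 1]=[\nu_n x_n\oplus\nu_n]$ in the sphere (positive scalar), projecting yields convergence to $[z\oplus 0]$, which is (i).

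For (ii)$\Leftrightarrow$(iii): assuming (ii), every open cone $\Gamma\ni z$ contains $\nu_n x_n$ for $n$ large (as a neighborhood of $z$), hence contains $x_n$ by positive scalar invariance; since $\|x_n\|=\|\nu_n x_n\|/\nu_n\to+\infty$, the intersection $\Gamma\cap X$ is unbounded. Conversely, take the decreasing family of open cones $\Gamma_n:=\{y\neq 0:\|y/\|y\|-z/\|z\|\|<1/n\}\cup\{0\}$; by (iii), each contains some $x_n\in X$ with $\|x_n\|\geq n$, and then $\nu_n:=\|z\|/\|x_n\|$ gives $\nu_n\to 0^+$ and $\nu_n x_n\to z$.

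Part (a) then comes for free: given $z\in\AC_\mathbb{R}(X)\setminus\{0\}$, the sequences from (ii) also converge in $V\oplus\mathbb{C}$, so $[x_n\oplus 1]\to[z\oplus 0]=\kappa([z])$ in $\mathbb{P}(V\oplus\mathbb{C})$, whence $\kappa([z])\in\AC^{\mathrm{p}}(X)$ and $z\in\AC(X)$. The equality $\ACC(X)=\AC_\mathbb{R}(X)$ is a direct rewriting of (i)$\Leftrightarrow$(iii), both cones containing the origin by convention. The only genuinely delicate point is the bookkeeping of scaling factors in the first equivalence: one must choose $\nu_n$ so that $\nu_n\to 0^+$ while $\nu_n x_n$ converges to a nonzero vector, and conversely one must extract from convergence on the sphere that $\|x_n\|\to+\infty$; this is the mechanism by which ``escape to infinity'' inside $V_\mathbb{R}$ is recorded as ``approach to the sphere at infinity'' in $\mathbb{S}(V_\mathbb{R}\oplus\mathbb{R})$.
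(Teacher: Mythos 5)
Your proof is correct and follows essentially the same route as the paper's: both arguments unwind the definition of $\AC_\mathbb{R}(X)$ as a closure in $\mathbb{S}(V_\mathbb{R}\oplus\mathbb{R})$ to obtain the scaling characterization (ii), and both prove (ii)$\Leftrightarrow$(iii) by generating cones from neighborhoods of $z$; your use of an explicit Euclidean norm and sequences, versus the paper's neighborhood bookkeeping on the quotient sphere, is a presentational difference only (and, as in the paper, your part (a) tacitly uses that a classical limit point lies in the Zariski closure of $\iota(X)$). One cosmetic slip: the set $\Gamma_n$ with $\{0\}$ adjoined is not open at the origin, so you should invoke (iii) for $\Gamma_n\setminus\{0\}$, which is an open positive cone containing $z$; the rest of the argument is unchanged.
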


We also point out the following simple criterion for the asymptotic cone to be nontrivial:

\begin{pro} \label{cone-nontriv}
For every subset $X\subset V_\mathbb{R}$, the cone $\AC_\mathbb{R}(X)$ is nontrivial
(i.e., $\not=\{0\}$) if and only if the set $X$ is unbounded.
\end{pro}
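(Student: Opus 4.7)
The plan is to use the characterization from Proposition \ref{P-cones}(b), specifically condition (ii), which says that a nonzero $z$ lies in $\AC_\mathbb{R}(X)$ if and only if there exist sequences $\{\nu_n\}\subset\mathbb{R}_+^*$ with $\nu_n\to 0$ and $\{x_n\}\subset X$ such that $z=\lim_{n\to+\infty}\nu_n x_n$. With this equivalence at hand, both directions reduce to an elementary argument about norms, so once the framework is in place there is very little to do.

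For the forward direction, fix a Euclidean norm $\|\cdot\|$ on $V_\mathbb{R}$ and suppose $X$ is unbounded. Then I can choose a sequence $\{x_n\}\subset X$ with $\|x_n\|\to+\infty$, and set $\nu_n=1/\|x_n\|$, which satisfies $\nu_n\to 0^+$. The vectors $\nu_n x_n$ lie on the unit sphere of $V_\mathbb{R}$, which is compact, so a subsequence converges to some $z$ with $\|z\|=1$, in particular $z\neq 0$. By condition (ii) of Proposition \ref{P-cones}(b), $z\in\AC_\mathbb{R}(X)$, so the cone is nontrivial.

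For the reverse direction, assume $\AC_\mathbb{R}(X)\neq\{0\}$ and pick $z\in\AC_\mathbb{R}(X)$ with $z\neq 0$. Again using condition (ii), write $z=\lim_{n\to+\infty}\nu_n x_n$ with $\nu_n\to 0^+$ and $x_n\in X$. Then $\|\nu_n x_n\|\to\|z\|>0$, so for $n$ large one has $\|x_n\|=\|\nu_n x_n\|/\nu_n\to+\infty$, proving that $X$ is unbounded.

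There is no real obstacle here: the whole content of the proposition is packaged inside Proposition \ref{P-cones}(b). The only point that deserves care is to make sure that we do not implicitly use compactness of the projective/spherical object in a way that requires $X$ to be closed, which we avoid by passing to a convergent subsequence on the unit sphere in the forward direction, and by directly reading off divergence of $\|x_n\|$ from convergence of $\nu_n x_n$ in the reverse direction.
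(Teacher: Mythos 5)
Your proof is correct and follows essentially the same route as the paper: the unboundedness direction is the same normalization-and-compactness argument (the paper checks membership of the limit point directly from the definition of $\AC_\mathbb{R}$ via $\iota^+$, while you route it through Proposition \ref{P-cones}(b)(ii), which is an equivalent packaging), and the converse direction is identical, reading off $\|x_n\|\to+\infty$ from $\nu_n x_n\to z\neq 0$ and $\nu_n\to 0$.
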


\begin{proof}[Proof of Proposition \ref{P-cones}]
(a) The inclusion $V_\mathbb{R}\subset V$ yields a continuous mapping
$\phi:\mathbb{S}(V_\mathbb{R}\oplus\mathbb{R})\to\mathbb{P}(V\oplus\mathbb{C})$ with
$\phi(\mathbb{S}(V_\mathbb{R}))\subset\mathbb{P}(V)$. Moreover, we have a commutative diagram
\[
\xymatrix{
V_\mathbb{R}\setminus\{0\} \ar@{^{(}->}[d] \ar[r]^{\pi^+} & \mathbb{S}(V_\mathbb{R}) \ar@{^{(}->}[r] \ar[d]^\phi & 
\mathbb{S}(V_\mathbb{R}\oplus\mathbb{R}) \ar[d]^\phi & \ar[l]_{\iota^+} V_\mathbb{R} \ar@{^{(}->}[d] \\
V\setminus\{0\} \ar[r]^{\pi} & \mathbb{P}(V) \ar@{^{(}->}[r] & 
\mathbb{P}(V\oplus\mathbb{C}) & \ar[l]_{\iota} V
}
\]
For a subset $X\subset V_\mathbb{R}$, we get 
\[(\pi^+)^{-1}(\mathbb{S}(V_\mathbb{R})\cap\overline{\iota^+(X)})\subset (\pi^+)^{-1}(\phi^{-1}(\mathbb{P}(V))\cap
\phi^{-1}(\overline{\iota(X)}))=(V_\mathbb{R}\setminus\{0\})\cap\pi^{-1}(\mathbb{P}(V)\cap\overline{\iota(X)}). \] 
This yields the desired inclusion.

(b) Let $\pi^+$ also denote the canonical map 
\begin{equation*}
(V_\mathbb{R}\oplus\mathbb{R})\setminus\{0\}\to\mathbb{S}(V_\mathbb{R}\oplus\mathbb{R}).
\end{equation*}
By virtue of the continuity of $\pi^+$, for every neighborhood $\mathcal{W}$ of $\pi^+(z)$ in $\mathbb{S}(V_\mathbb{R}\oplus\mathbb{R})$, there are $\epsilon>0$ and a neighborhood $\mathcal{V}$ of $z$ in $V_\mathbb{R}$ such that $\pi^+(\mathcal{V}\oplus(-\epsilon,\epsilon))\subset\mathcal{W}$. Conversely, 
if $\epsilon>0$ and $\mathcal{V}$ is a neighborhood of $z$ in $V_\mathbb{R}$, then $\pi^+(\mathcal{V}\oplus(-\epsilon,\epsilon))$ is a neighborhood of $\pi^+(z)$ in $\mathbb{S}(V_\mathbb{R}\oplus\mathbb{R})$.

Condition (ii) is equivalent to
\begin{eqnarray}
\label{equivalent-ii}
\forall\epsilon>0,\ \forall \mathcal{V}\subset V_\mathbb{R}\mbox{ neighborhood of $z$,}\ \exists \nu\in(0,\epsilon),\ \exists x\in X,\mbox{ such that }\nu x\in\mathcal{V}.
\end{eqnarray}
By definition of the cone $\mathcal{AC}_{\mathbb{R}}(X)$, we have
\begin{eqnarray*}
z \in \AC_{\mathbb{R}}(X) & \Longleftrightarrow & \pi^+(z)\in\overline{\iota^+(X)} \\
 & \Longleftrightarrow & \forall\epsilon>0,\ \forall\mathcal{V}\subset V_\mathbb{R}\mbox{ neighborhood of $z$, }\pi^+(\mathcal{V}\oplus(-\epsilon,\epsilon))\cap \iota^+(X)\not=\emptyset.
\end{eqnarray*}
The intersection $\pi^+(\mathcal{V}\oplus(-\epsilon,\epsilon))\cap \iota^+(X)$ is nonempty if and only if there is $x\in X$ such that $\iota^+(x)=[x\oplus 1]$ belongs to $\pi^+(\mathcal{V}\oplus(-\epsilon,\epsilon))$, that is, if and only if there are $x\in X$ and $\nu\in\mathbb{R}_+^*$ such that \[\nu(x\oplus 1)\in\mathcal{V}\oplus(-\epsilon,\epsilon).\]
This is equivalent to saying that there are $x\in X$ and $\nu\in(0,\epsilon)$ such that $\nu x \in\mathcal{V}$. This establishes the equivalence between (i) and (ii).

(ii)$\Rightarrow$(iii): 
Let $\{\nu_n\}_{n\geq 1}\subset\mathbb{R}_+^*$ and $\{x_n\}_{n\geq 1}\subset X$ be as in (ii). Note that, since the sequence $\{\nu_n x_n\}_{n\geq 1}\subset V_\mathbb{R}$ has a nonzero limit while $\{\nu_n\}_{n\geq 1}$ converges to $0$, the sequence $\{x_n\}_{n\geq 1}$ is necessarily unbounded.
Now, let $\Gamma\subset V_\mathbb{R}$ be an open cone which contains $z$. 
Since $\Gamma$ is open, there is $n_0\geq 1$ such that, for all $n\geq n_0$, we have $\nu_nx_n\in \Gamma$.
Since $\Gamma$ is a cone, we get $x_n\in\Gamma\cap X$ for all $n\geq n_0$.
Therefore, the intersection $\Gamma\cap X$ is unbounded.
Therefore, (ii) implies (iii).

(iii)$\Rightarrow$(ii):
Let $\epsilon>0$ and let $\mathcal{V}\subset V_\mathbb{R}$ be an open, bounded neighborhood of $z$.
Let us consider the open cone $\Gamma:=\{tv\mid t\in\mathbb{R}_+^*,\ v\in\mathcal{V}\}$.
Since $z$ belongs to that cone, condition (iii) implies that there is an unbounded sequence
$\{x_n\}_{n\geq 1}$ contained in $\Gamma\cap X$.
The definition of $\Gamma$ yields a sequence $\{\nu_n\}_{n\geq 1}\subset\mathbb{R}_+^*$
such that $\nu_nx_n\in\mathcal{V}$ for all $n\geq 1$.
Since $\mathcal{V}$ is bounded while $\{x_n\}_{n\geq 1}$ is unbounded, the sequence $\{\nu_n\}_{n\geq 1}$ has to converge to $0$, hence we can find $n\geq 1$ such that $\nu_n\in(0,\epsilon)$.
We have shown the condition stated in (\ref{equivalent-ii}), and this implies that (ii) holds whenever (iii) is satisfied.
The proof of the proposition is complete.
\end{proof}

\begin{proof}[Proof of Proposition \ref{cone-nontriv}]
Let $\|\cdot\|$ be any norm on the finite-dimensional space $V_\mathbb{R}$.

Assume first that there exists $z\in\AC_\mathbb{R}(X)$, $z\not=0$.
By Proposition \ref{P-cones}\,{\rm (b)}, there is a sequence $\{\nu_n\}_{n\geq 1}\subset\mathbb{R}_+^*$
converging to $0$ and a sequence $\{x_n\}_{n\geq 1}\subset X$ such that 
\[\lim_{n\to +\infty}\nu_n x_n=z,\quad \mbox{hence}\quad
\lim_{n\to+\infty}\nu _n \|x_n\|=\|z\|\not=0.\]
This implies that $\|x_n\|\to +\infty$ as $n\to +\infty$. Therefore, $X$ contains an unbounded sequence.

Next assume that $X$ contains a sequence $\{x_n\}_{n\geq 1}$ such that $\|x_n\|\to+\infty$ as $n\to +\infty$.
Up to considering a subsequence, we may assume that the bounded sequence $\{\frac{x_n}{\|x_n\|}\}_{n\geq 1}$ converges to some $y\in V_\mathbb{R}$ such that $\|y\|=1$.
This yields
\[
\iota^+(x_n)=[x_n\oplus 1]=\big[\frac{x_n}{\|x_n\|}\oplus\frac{1}{\|x_n\|}\big]\longrightarrow [y\oplus 0]\quad\mbox{as $n\to+\infty$,}
\]
hence $\mathbb{S}(V_\mathbb{R})\cap\overline{\iota^+(X)}$ is nonempty as it contains $[y\oplus 0]$.
This implies that the cone $\AC_\mathbb{R}(X)$ is nontrivial as it contains the half line $\{ty\mid t\geq 0\}$.
\end{proof}

\subsection{Asymptotic cones of adjoint orbits}\label{section-5.3}

In this section, we take $V=\mathfrak{g}$ and $V_\mathbb{R}=\mathfrak{g}_\mathbb{R}$.
The asymptotic cone of the orbit of a semisimple element $x\in\mathfrak{g}$
can be described in the following way.

\begin{thm}[Borho--Kraft \cite{Borho-Kraft}] 
\label{TBK}
The asymptotic cone $\AC(G\cdot x)$ of the semisimple orbit $G\cdot x$ coincides with the Zariski closure of the Richardson orbit $\mathcal{O}_{\mathrm{Rich}}(x)$ defined in (\ref{DefRich}):
$$\AC(G\cdot x)=\overline{\mathcal{O}_{\mathrm{Rich}}(x)}.$$
\end{thm}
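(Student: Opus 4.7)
The plan is to prove the two inclusions separately. As in the proof of Proposition \ref{P-cones}\,(b), an element $z\in\mathfrak{g}$ belongs to $\AC(G\cdot x)$ if and only if $z=0$ or there exist sequences $\{\lambda_n\}\subset\mathbb{C}^*$ with $\lambda_n\to 0$ and $\{g_n\}\subset G$ such that $z=\lim_n\lambda_n\,\mathrm{Ad}(g_n)(x)$. Write $\mathfrak{l}=\mathfrak{g}^x$, so that $\mathfrak{q}=\mathfrak{l}\oplus\mathfrak{u}$ is the parabolic used in the definition of $\mathcal{O}_{\mathrm{Rich}}(x)$, and let $U=\exp(\mathfrak{u})$.

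For $\overline{\mathcal{O}_{\mathrm{Rich}}(x)}\subset\AC(G\cdot x)$, I would first establish the identity $\mathrm{Ad}(U)(\lambda x)=\lambda x+\mathfrak{u}$ for every $\lambda\in\mathbb{C}^*$. The morphism $U\to\lambda x+\mathfrak{u}$, $u\mapsto\mathrm{Ad}(u)(\lambda x)$, is injective (any $u$ fixing $\lambda x$ lies in $Z_G(\lambda x)\cap U$, whose Lie algebra is $\mathfrak{l}\cap\mathfrak{u}=0$, forcing $u=e$), its differential at $e$ is $v\mapsto -\lambda\,\mathrm{ad}(x)(v)$, which is bijective on $\mathfrak{u}$ since $\mathrm{ad}(x)$ has no kernel in $\mathfrak{u}$, and the source and target have the same dimension $\dim\mathfrak{u}$; hence it is an isomorphism of affine varieties. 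For any $y\in\mathfrak{u}$, choosing $u_\lambda\in U$ with $\mathrm{Ad}(u_\lambda)(\lambda x)=\lambda x+y$ and letting $\lambda\to 0$ gives $\mathrm{Ad}(u_\lambda)(\lambda x)\to y$, so $y\in\AC(G\cdot x)$. Thus $\mathfrak{u}\subset\AC(G\cdot x)$, and using that $\AC(G\cdot x)$ is $G$-stable and closed, we obtain $\overline{G\cdot\mathfrak{u}}=\overline{\mathcal{O}_{\mathrm{Rich}}(x)}\subset\AC(G\cdot x)$.

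For the converse, I would argue by dimension and irreducibility. First, $\AC(G\cdot x)\subset\mathcal{N}(\mathfrak{g})$: for $z=\lim\lambda_n\,\mathrm{Ad}(g_n)(x)$ with $\lambda_n\to 0$, the characteristic polynomial of $\mathrm{ad}(z)$ equals $\lim\det(T-\lambda_n\,\mathrm{ad}(x))=T^{\dim\mathfrak{g}}$, so $z$ is nilpotent. Second, the projective closure $\overline{\iota(G\cdot x)}\subset\mathbb{P}(\mathfrak{g}\oplus\mathbb{C})$ is irreducible of dimension $d:=\dim G\cdot x$ and is not contained in the hyperplane at infinity $\kappa(\mathbb{P}(\mathfrak{g}))$ (as $\iota(G\cdot x)$ meets its complement), so Krull's principal ideal theorem implies that $\AC^{\mathrm{p}}(G\cdot x)$ is equidimensional of dimension $d-1$, whence $\AC(G\cdot x)$ is equidimensional of dimension $d=\dim\overline{\mathcal{O}_{\mathrm{Rich}}(x)}$ by (\ref{dim-Rich}). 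Combined with the first inclusion, this identifies $\overline{\mathcal{O}_{\mathrm{Rich}}(x)}$ as one irreducible component of $\AC(G\cdot x)$.

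The main obstacle is to exclude any further component. My preferred approach would introduce the irreducible variety $Z:=\overline{\mathbb{C}^*\cdot G\cdot x}$ of dimension $d+1$, together with the morphism $\chi|_Z:Z\to C$ induced by the adjoint quotient $\chi:\mathfrak{g}\to\mathfrak{g}//G$, where $C=\{\chi(\lambda x):\lambda\in\mathbb{C}\}$ is an irreducible affine curve. The generic fibers are the orbits $G\cdot(\lambda x)$ of dimension $d$, and the fiber over the origin is $Z\cap\mathcal{N}(\mathfrak{g})=\AC(G\cdot x)$ (the characteristic polynomial argument forces any nilpotent point of $Z$ to arise from a sequence with $\lambda_n\to 0$, while $\mathbb{C}^*\cdot G\cdot x$ consists of semisimple elements). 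Flatness of $\chi|_Z$ over the smooth curve $C$ then transports the irreducibility of the generic fiber to the special fiber, forcing $\AC(G\cdot x)$ to be supported on a single orbit closure, which by the first inclusion must be $\overline{\mathcal{O}_{\mathrm{Rich}}(x)}$. Carrying out this flat specialization step rigorously—as performed by Borho and Kraft in \cite{Borho-Kraft}—is the genuine technical heart of the theorem.
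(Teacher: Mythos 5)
The paper itself does not prove Theorem \ref{TBK} (it is quoted from \cite{Borho-Kraft}), so the only in-paper benchmark is the proof of its real analogue, Theorem \ref{T4}. Your first inclusion and the dimension bookkeeping are essentially sound (one small slip: ``injective morphism with bijective differential at $e$ between varieties of the same dimension'' does not imply surjectivity onto $\lambda x+\mathfrak{u}$; you only get an open dense $U$-orbit, which however suffices for the limit argument, or you can get the exact identity $\mathrm{Ad}(U)(\lambda x)=\lambda x+\mathfrak{u}$ from closedness of unipotent orbits, or by the triangular induction used in the paper's proof of (\ref{T4:claim})). The genuine gap is the final step. The principle ``flatness of $\chi|_Z$ over the curve $C$ transports irreducibility of the generic fiber to the special fiber'' is false: the flat family $xy=t$ over $\mathbb{C}$ has irreducible fibers for $t\neq 0$ and a reducible special fiber, and this example \emph{is} the statement that the asymptotic cone of an irreducible hyperbola is a union of two lines --- exactly the phenomenon you must rule out, so flatness alone cannot do it. There are also secondary problems with the setup: $C$ is a monomial-type curve $\lambda\mapsto(\lambda^{d_i}p_i(x))_i$, possibly singular precisely at the origin, the only point that matters; and the fiber of $\chi|_Z$ over a generic point is not obviously the single orbit $G\cdot(\lambda x)$ (a priori it may pick up boundary points of $Z$ whose semisimple part is conjugate to $\lambda x$). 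Since you explicitly defer this step to \cite{Borho-Kraft}, the hard inclusion $\AC(G\cdot x)\subset\overline{\mathcal{O}_{\mathrm{Rich}}(x)}$ is not actually proved in your proposal, and the mechanism you sketch for it would not work.

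The missing step can be done directly, by the same device the paper uses for Theorem \ref{T4}(b), and it makes the flatness and even the equidimensionality discussion unnecessary. The subspace $\mathbb{C}x+\mathfrak{u}$ is Zariski closed and $Q$-stable (elements of $Q=LU$ fix $x$ modulo $\mathfrak{u}$ and preserve $\mathfrak{u}$), so $G\cdot(\mathbb{C}x+\mathfrak{u})$ is closed in $\mathfrak{g}$: this is the complex analogue of Lemma \ref{L6.2-new}, using completeness of $G/Q$. This closed set contains $G\cdot(\lambda x)$ for every $\lambda$, hence contains $\overline{\mathbb{C}^{*}\cdot G\cdot x}\supset\AC(G\cdot x)$. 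Now take $z\in\AC(G\cdot x)$ and write $z=\mathrm{Ad}(g)(\lambda x+u)$ with $u\in\mathfrak{u}$; since $z$ is nilpotent (your characteristic polynomial argument) and, for $\lambda\neq 0$, the element $\lambda x+u$ lies in $\overline{\mathrm{Ad}(U)(\lambda x)}$ --- so every positive-degree invariant takes on it the same (not all zero) values as on the nonzero semisimple element $\lambda x$ --- we must have $\lambda=0$. Hence $z\in G\cdot\mathfrak{u}$, i.e.\ $\AC(G\cdot x)\subset G\cdot\mathfrak{u}=\overline{\mathcal{O}_{\mathrm{Rich}}(x)}$ by (\ref{Richardson-hyp}), which together with your first inclusion proves the theorem. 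I recommend replacing the flat-specialization paragraph by this argument (or by an honest reproduction of the Borho--Kraft proof), rather than leaving the ``technical heart'' as an appeal to the citation.
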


In the real setting, Proposition \ref{P-cones} yields the following result which relates the asymptotic cone with the limit of orbits.

\begin{thm}\label{posconethm}
\begin{itemize}
\item[(a)] For every $x\in\mathfrak{g}_\mathbb{R}$, we have
\[\lim_{\nu\to 0^+}G_\mathbb{R}\cdot(\nu x)=\AC_\mathbb{R}(G_\mathbb{R}\cdot x)
\subset \AC(G_\mathbb{R}\cdot x)
\subset \AC(G\cdot x).\]
\item[(b)] In particular, for every $x\in\mathfrak{g}_\mathbb{R}$ semisimple, we have
\[
\lim_{\nu\to 0^+}G_\mathbb{R}\cdot(\nu x)
\subset \overline{\mathcal{O}_{\mathrm{Rich}}(x)}\cap\mathfrak{g}_\mathbb{R}.\]
\end{itemize}
\end{thm}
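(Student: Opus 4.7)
The plan is to deduce part (a) by identifying the equality and each of the two inclusions as direct consequences of the structural results of this section, and then to obtain part (b) by combining part (a) with the Borho--Kraft theorem.

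For the equality $\lim_{\nu\to 0^+}G_\mathbb{R}\cdot(\nu x)=\AC_\mathbb{R}(G_\mathbb{R}\cdot x)$, I will simply match the defining conditions. By Definition \ref{deflim} (unpacked just below that definition), a nonzero element $z\in\mathfrak{g}_\mathbb{R}$ belongs to the limit exactly when there are sequences $\nu_k\to 0^+$ in $\mathbb{R}_+^*$ and $g_k\in G_\mathbb{R}$ with $z=\lim_{k\to+\infty}\nu_k\,\mathrm{Ad}(g_k)(x)$. Setting $x_k:=\mathrm{Ad}(g_k)(x)\in G_\mathbb{R}\cdot x$, this is precisely condition (ii) of Proposition \ref{P-cones}(b) for $X=G_\mathbb{R}\cdot x$, which by the equivalence (i)$\Leftrightarrow$(ii) there is the same as $z\in\AC_\mathbb{R}(G_\mathbb{R}\cdot x)$. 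The element $0$ lies in both sets by convention, so the equality holds.

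The first inclusion in (a) is Proposition \ref{P-cones}(a) applied to $X=G_\mathbb{R}\cdot x$. For the second inclusion $\AC(G_\mathbb{R}\cdot x)\subset\AC(G\cdot x)$, I invoke the tautological monotonicity of the complex asymptotic cone under set inclusion: from $G_\mathbb{R}\cdot x\subset G\cdot x$ we get $\iota(G_\mathbb{R}\cdot x)\subset\iota(G\cdot x)$, hence $\overline{\iota(G_\mathbb{R}\cdot x)}\subset\overline{\iota(G\cdot x)}$ in $\mathbb{P}(\mathfrak{g}\oplus\mathbb{C})$, and the prescription of Definition \ref{complexAC} (intersect with $\kappa(\mathbb{P}(\mathfrak{g}))$, then pull back by $\pi$, and adjoin $\{0\}$) clearly preserves inclusions.

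For part (b), part (a) yields $\lim_{\nu\to 0^+}G_\mathbb{R}\cdot(\nu x)\subset\AC(G\cdot x)$, and when $x$ is semisimple the Borho--Kraft theorem (Theorem \ref{TBK}) identifies $\AC(G\cdot x)$ with $\overline{\mathcal{O}_{\mathrm{Rich}}(x)}$. Since the limit is contained in $\mathfrak{g}_\mathbb{R}$ (in fact in $\mathcal{N}(\mathfrak{g}_\mathbb{R})$, by Proposition \ref{basicprop}(a)), intersecting yields the claim. The substantive content has already been packaged into Proposition \ref{P-cones} and the Borho--Kraft theorem, so the proof I am proposing is essentially bookkeeping; I do not foresee any genuine obstacle beyond carefully matching quantifiers between Definition \ref{deflim} and condition (ii) of Proposition \ref{P-cones}(b).
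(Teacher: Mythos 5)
Your proposal is correct and follows essentially the same route as the paper: the equality is Proposition \ref{P-cones}\,(b)\,(i)$\Leftrightarrow$(ii) applied to $X=G_\mathbb{R}\cdot x$, the first inclusion is Proposition \ref{P-cones}\,(a), the second inclusion is the monotonicity of $\AC$ under $G_\mathbb{R}\cdot x\subset G\cdot x$ (which the paper dismisses as obvious), and (b) combines (a) with Theorem \ref{TBK} together with the fact that the limit lies in $\mathfrak{g}_\mathbb{R}$. No gaps.
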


\begin{proof}
The equality in (a) is implied by Proposition \ref{P-cones}\,(b)\,(i)$\Leftrightarrow$(ii).
The first inclusion is implied by Proposition \ref{P-cones}\,(a). The second inclusion is obvious.
(b) follows from (a) and Theorem \ref{TBK}.
\end{proof}

\section{Nontriviality of the limit of orbits}\label{nontriviality}

Relying on the identification of the limit of orbits with an asymptotic cone 
(Theorem \ref{posconethm})
and on the criterion stated in Proposition \ref{cone-nontriv} to guarantee the nontriviality of an asymptotic cone, we are in position to state the following result
regarding the nontriviality of the limit of orbits.

\begin{thm} \label{theorem-nontriv}
Recall that the group $G_\mathbb{R}$ is connected simple linear and non-compact.
Then, for every nonzero $x\in\mathfrak{g}_\mathbb{R}$, we have $\lim\limits_{\nu\to 0^+}G_\mathbb{R}\cdot(\nu x)\not=\{0\}$.
\end{thm}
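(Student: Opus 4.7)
The plan is to chain together the two preparatory results of Section \ref{cones} to convert the statement into a purely geometric assertion. By Theorem \ref{posconethm}(a) one has $\lim_{\nu\to 0^+}G_\mathbb{R}\cdot(\nu x)=\AC_\mathbb{R}(G_\mathbb{R}\cdot x)$, and by Proposition \ref{cone-nontriv} the right-hand side is $\neq\{0\}$ precisely when the orbit $G_\mathbb{R}\cdot x$ is an unbounded subset of $\fg_\R$. So the whole theorem reduces to showing: for $G_\mathbb{R}$ simple non-compact and $x\in\fg_\R\setminus\{0\}$, the orbit $G_\mathbb{R}\cdot x$ is unbounded.

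To produce unboundedness I plan to exploit the Cartan decomposition $\fg_\R=\fk_\R\oplus\fp_\R$ from (\ref{cartandecomp}). The key step is: find $a\in\fp_\R$ with $[a,x]\neq 0$. Granting this, the endomorphism $\ad(a)$ is symmetric with respect to the positive definite form $B_\theta(Y,Z):=-B(Y,\theta Z)$ (with $B$ the Killing form), hence diagonalizable with real eigenvalues. Decomposing $x=\sum_\mu x_\mu$ in the eigenspaces of $\ad(a)$, the assumption $[a,x]\neq 0$ yields a nonzero $x_\mu$ with $\mu\neq 0$, and then
\[
\Ad(\exp(ta))(x)=\sum_\mu e^{t\mu}x_\mu\in G_\mathbb{R}\cdot x
\]
is unbounded as $|t|\to+\infty$ (the term with largest $|\mu|$ dominates for the appropriate sign of $t$). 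Combining with the reduction above, this finishes the proof.

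The main obstacle is therefore the centralizer claim
\[
Z:=\{y\in\fg_\R\mid [y,\fp_\R]=0\}=\{0\}.
\]
My plan for this is as follows. First, $Z$ is $\theta$-stable, hence $Z=(Z\cap\fk_\R)\oplus(Z\cap\fp_\R)$. The piece $Z\cap\fp_\R$ is $K_\mathbb{R}$-stable, so by the irreducibility of the isotropy representation $K_\mathbb{R}\curvearrowright\fp_\R$ (a classical consequence of the simplicity of $\fg_\R$) it equals either $\{0\}$ or $\fp_\R$; the second option would force $\fp_\R$ to be an abelian, necessarily proper (since $\fk_\R\neq 0$ always for a simple real Lie algebra of rank $\geq 1$), nonzero ideal of $\fg_\R$, contradicting simplicity. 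The piece $Z\cap\fk_\R$ can be checked directly, via the Jacobi identity, to be a $\fg_\R$-ideal (one uses $[\fp_\R,Z\cap\fk_\R]\subset[\fp_\R,Z]=0$ and a one-line computation for $[\fk_\R,Z\cap\fk_\R]$); it is contained in the proper subspace $\fk_\R\subsetneq\fg_\R$ (proper since $G_\mathbb{R}$ is non-compact, i.e., $\fp_\R\neq 0$), so by simplicity it is trivial. Hence $Z=\{0\}$, which gives the desired $a\in\fp_\R$ for every nonzero $x$. The only genuinely non-formal input is the $K_\mathbb{R}$-irreducibility of $\fp_\R$; all the remaining steps are book-keeping with $\theta$-stability and the Jacobi identity.
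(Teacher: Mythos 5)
Your proposal is correct and follows essentially the same route as the paper: both reduce the theorem, via Theorem \ref{posconethm} and Proposition \ref{cone-nontriv}, to showing that $G_\mathbb{R}\cdot x$ is unbounded, and both achieve this by producing an element $a\in\fp_\R$ with $[a,x]\neq 0$ and flowing along $\mathrm{Ad}(\exp(ta))$. The differences are only in the supporting details: the paper obtains the non-commuting element from the cited identity $\fk_\R=[\fp_\R,\fp_\R]$ and then runs a (in fact redundant) case split on the Jordan decomposition of $z\in\fp_\R$, whereas you prove triviality of the centralizer of $\fp_\R$ by hand ($\theta$-stability, Jacobi, irreducibility of the isotropy representation) and use the $B_\theta$-symmetry of $\ad(a)$ to see every element of $\fp_\R$ is hyperbolic, which removes the case analysis --- both variants are standard and valid.
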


\begin{proof}
Since $\mathrm{ad}(x)\not=0$, knowing that 
$\mathfrak{k}_\mathbb{R}=[\mathfrak{p}_\mathbb{R},\mathfrak{p}_\mathbb{R}]$
(because $\mathfrak{g}_\mathbb{R}$ is simple and non-compact; see \cite[VI.12.24]{Knapp}),
we can find $z\in\mathfrak{p}_\mathbb{R}$ such that $[x,z]\not=0$.
Let us write
\[
z=z_s+z_n,
\]
where $z_s$ is semisimple and hyperbolic, $z_n$ is nilpotent, $z_s,z_n\in\mathfrak{p}_\mathbb{R}$, and $[z_s,z_n]=0$.

If $[x,z_n]\not=0$, then
\[
\mathrm{Ad}(\exp(tz_n))(x)=x+t[z_n,x]+\sum_{k\geq 2}\frac{t^k}{k!}(\mathrm{ad}\,z)^k(x)
\]
determines a nonconstant polynomial curve contained in the orbit $G_\mathbb{R}\cdot x$,
which implies that $G_\mathbb{R}\cdot x$ is unbounded.

If $[x,z_n]=0$, we may assume that $z_n=0$. 
Then $z=z_s$ is semisimple and hyperbolic, and
the Lie algebra $\mathfrak{g}_\mathbb{R}$ decomposes as a sum of
eigenspaces
\[
\mathfrak{g}_\mathbb{R}=\bigoplus_{k=1}^\ell (\mathfrak{g}_\mathbb{R})_k
\quad\mbox{where}\quad
(\mathfrak{g}_\mathbb{R})_k:=\{y\in\mathfrak{g}_\mathbb{R}\mid [z,y]=\mu_k y\}
\]
for a sequence $\{\mu_1,\ldots,\mu_\ell\}$ of real numbers.
In particular we can write
\[
x=x_1+\ldots+x_\ell\quad\mbox{with }\ x_k\in(\mathfrak{g}_\mathbb{R})_k.
\]
Moreover, since $[z,x]\not=0$, there must be $k_0\in\{1,\ldots,\ell\}$
such that $x_{k_0}\not=0$ and $\mu_{k_0}\not=0$.
Then
\[
\mathrm{Ad}(\exp(tz))(x)=\sum_{k=1}^\ell \exp(t\mu_k)x_k\quad\mbox{(for $t\in\mathbb{R}$)}
\]
determines an unbounded curve contained in the orbit $G_\mathbb{R}\cdot x$.
In each case, we conclude that the orbit $G_\mathbb{R}\cdot x$ is unbounded.
The claim now follows from Proposition \ref{cone-nontriv} and Theorem \ref{posconethm}.
\end{proof}
\begin{rem}\label{remcpt}
(a) In the case where the Lie algebra $\mathfrak{g}_\mathbb{R}$ is compact, its nilpotent cone
is reduced to $\{0\}$, hence every limit of orbits is trivial.

(b) Theorem \ref{theorem-nontriv} extends to the case where $\mathfrak{g}_\mathbb{R}$ is semisimple,
with decomposition as a sum of simple ideals $\mathfrak{g}_\mathbb{R}=\mathfrak{g}_\mathbb{R}^1\oplus\ldots\oplus\mathfrak{g}_\mathbb{R}^\ell$,
not all of them being compact,
and $x=x^1+\ldots+x^\ell$ has at least one nonzero summand $x^i$ in a non-compact simple factor $\mathfrak{g}_\mathbb{R}^i$ of $\mathfrak{g}_\mathbb{R}$.
\end{rem}

Note that one can bound from above the dimension of the limit of orbits as\begin{equation}
\label{C1-new}
\dim \lim_{\nu\to 0^+}G_\mathbb{R}\cdot(\nu x)\leq \dim G_\mathbb{R}\cdot x.
\end{equation}
Moreover, the equality holds if and only if the Richardson orbit $\mathcal{O}_{\mathrm{Rich}}(x)$ intersects the limit $\lim\limits_{\nu\to 0^+}G_\mathbb{R}\cdot(\nu x)$.
This follows from (\ref{dim-Rich}), Theorem \ref{posconethm}, and the fact that $\dim_\mathbb{C}G\cdot x=\dim_\mathbb{R}G_\mathbb{R}\cdot x$.

It is however more difficult to bound from below the dimension of the limit.
Take for example $G_\mathbb{R}=\mathrm{SU}(n-1,1)$ with $n\geq 4$.
On one hand, if $x\in\mathfrak{su}(n-1,1)$ is regular, then $\dim G_\mathbb{R}\cdot x=n^2-n$.
On the other hand, every nilpotent element of $\mathfrak{su}(n-1,1)$ is of nilpotency order $\leq 3$
(see \cite[Theorem 9.3.3]{CM}) and its nilpotent orbit has dimension $\leq 4n-6$ (see \cite[Corollary 6.1.4]{CM}). 
Thus, in this case, the inequality in (\ref{C1-new}) is strict.


Even in cases in which every nilpotent orbit of the complexified Lie algebra $\mathfrak{g}$ admits real forms, equality does not necessarily hold in (\ref{C1-new}). 
Take for instance $G_\mathbb{R}=\mathrm{SU}(2,1)$ and 
$$x=\begin{pmatrix}-2i & 0 & 0 \\ 0 & -i & 0 \\ 0 & 0 & 3i\end{pmatrix}\in\mathfrak{su}(2,1).$$
The element $x$ is regular, semisimple (and elliptic), but
the limit $\lim\limits_{\nu\to 0^+}G_\mathbb{R}\cdot (\nu x)$ 
does not contain the sole principal nilpotent orbit of $\mathfrak{su}(2,1)$
as it only consists of elements of nilpotency order $\leq 2$; this follows from Section \ref{section-8.3} below.

\section{Limit of hyperbolic orbits}\label{hyperbolic}

In this section, we consider the limit of orbits associated with a nonzero hyperbolic semisimple element
$x\in\mathfrak{g}_\mathbb{R}$. 
This means that $\mathrm{ad}(x)$ has real eigenvalues on $\mathfrak{g}_\mathbb{R}$, 
thus also on $\mathfrak{g}$, and it determines a grading of the Lie algebra $\mathfrak{g}$
as in (\ref{grading-hyp}). Note that the eigenspace $\mathfrak{g}_\mu$ of the grading
is given by $\mathfrak{g}_\mu=(\mathfrak{g}_\mathbb{R})_\mu+i(\mathfrak{g}_\mathbb{R})_\mu$,
where $(\mathfrak{g}_\mathbb{R})_\mu$ stands for the eigenspace corresponding to the same eigenvalue
in $\mathfrak{g}_\mathbb{R}$.
Let
\[
\mathfrak{u}(x):=\bigoplus_{\mu>0}\mathfrak{g}_\mu\text{ and }\mathfrak{u}_\mathbb{R}(x):=\bigoplus_{\mu>0}(\mathfrak{g}_\mathbb{R})_\mu
\]
denote the nilradical of the parabolic subalgebra defined by $x$ and the corresponding real form.
By (\ref{Richardson-hyp}), the Richardson nilpotent orbit $\mathcal{O}_{\mathrm{Rich}}(x)$ corresponding to $x$
is such that
\[
\overline{\mathcal{O}_{\mathrm{Rich}}(x)}=G\cdot \mathfrak{u}(x).
\]
The following result provides a characterization of the limit of the orbits $G_\mathbb{R}\cdot(\nu x)$. The proof relies on a standard argument in Lie theory involving polarization of nilpotent orbits (see, e.g., \cite{Borho-Kraft}, \cite{Rossmann}).

\begin{thm}
\label{T4}
Let $x\in\mathfrak{g}_\mathbb{R}$ be a hyperbolic semisimple element.

{\rm (a)} The subset $G_\R\cdot\fu_\R(x)\subset\fg_\R$ is closed and equidimensional. It is of the form
\[
G_\R\cdot\fu_\R(x)=\overline{\mathcal{O}_1}\cup\ldots\cup\overline{\mathcal{O}_k}
\]
where $\mathcal{O}_1,\ldots,\mathcal{O}_k\subset\fg_\R$ are nilpotent $G_\R$-orbits which are all of the same dimension.

{\rm (b)} We have
\[
\lim_{\nu\to 0^+}G_\mathbb{R}\cdot(\nu x)=G_\mathbb{R}\cdot\mathfrak{u}_\mathbb{R}(x)=\overline{\mathcal{O}_1}\cup\ldots\cup\overline{\mathcal{O}_k}\subset \overline{\mathcal{O}_{\mathrm{Rich}}(x)}\cap\mathfrak{g}_\mathbb{R}.
\]
\end{thm}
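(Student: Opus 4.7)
My plan is to organize both parts around two structural ingredients: the compactness of the real partial flag variety $G_\R/Q_\R$, where $Q_\R$ is the real parabolic with Lie algebra $\fq_\R=\fg_\R^x\oplus\fu_\R(x)$; and a ``unipotent transitivity'' statement for the affine slice $\nu x+\fu_\R(x)$. After replacing $x$ by a $G_\R$-conjugate, one may assume that $x$ lies in a maximal $\R$-split Cartan subspace $\fa_\R\subset\fp_\R$. Choosing a positive restricted root system compatible with $x$ (so that $\alpha(x)>0$ implies $\alpha$ positive) puts a minimal parabolic $P_\R^{\min}$ inside $Q_\R$, and the Iwasawa decomposition then gives $G_\R=K_\R Q_\R$; in particular $G_\R/Q_\R$ is compact.

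For part (a), the evaluation map
\[
\Psi:G_\R\times_{Q_\R}\fu_\R(x)\to\fg_\R,\quad [g,u]\mapsto\Ad(g)\,u,
\]
is proper since its source is a vector bundle over the compact base $G_\R/Q_\R$, hence its image $G_\R\cdot\fu_\R(x)$ is closed in $\fg_\R$. To get the equidimensional decomposition, set $\fu_\R(x)^\circ:=\fu_\R(x)\cap\mathcal{O}_{\mathrm{Rich}}(x)$. The complement $\fu(x)\setminus\mathcal{O}_{\mathrm{Rich}}(x)$ is a proper Zariski closed subvariety of $\fu(x)$ defined over $\R$, so $\fu_\R(x)^\circ$ is Zariski open and dense in $\fu_\R(x)$. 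For $u\in\fu_\R(x)^\circ$, the identity $\dim_\R Z_{G_\R}(u)=\dim_\C Z_G(u)$ (valid because $\fg_\R^u$ is the real form of $\fg^u$) combined with (\ref{dim-Rich}) yields $\dim_\R G_\R\cdot u=2\dim_\R\fu_\R(x)$. Since $\fu_\R(x)^\circ$ is a semialgebraic open subset of the affine space $\fu_\R(x)$, it has finitely many connected components, each an open $Q_\R$-orbit, and these produce finitely many $G_\R$-orbits $\mathcal{O}_1,\ldots,\mathcal{O}_k$, all of the same real dimension $2\dim_\R\fu_\R(x)$. Any $u'\in\fu_\R(x)\setminus\fu_\R(x)^\circ$ is a limit of points of $\fu_\R(x)^\circ$ lying in a fixed component (by a pigeonhole argument), so $G_\R\cdot u'\subset\bigcup_i\overline{\mathcal{O}_i}$ and hence $G_\R\cdot\fu_\R(x)=\bigcup_i\overline{\mathcal{O}_i}$.

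For part (b), the inclusion $G_\R\cdot\fu_\R(x)\subset\lim_{\nu\to 0^+}G_\R\cdot(\nu x)$ rests on unipotent transitivity: since $\ad(x)$ is invertible on $\fu_\R(x)$, the orbit $\Ad(U_\R)(\nu x)$ has tangent space $[\fu_\R(x),\nu x]=\fu_\R(x)$ at $\nu x$, so it is open in the affine slice $\nu x+\fu_\R(x)$; it is also closed by Kostant--Rosenlicht applied to the unipotent group $U_\R=\exp\fu_\R(x)$, and hence equals the entire slice. Therefore, for every $u\in\fu_\R(x)$ and every $\nu>0$ one has $\nu x+u\in G_\R\cdot(\nu x)$; letting $\nu\to 0^+$ places $u$ in the limit. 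For the reverse inclusion, take $z=\lim_k\nu_k\Ad(g_k)(x)$ with $\nu_k\to 0^+$ and $g_k\in G_\R$. Decomposing $g_k=k_kq_k$ via $G_\R=K_\R Q_\R$ and using that $\Ad(q_k)(x)\in x+\fu_\R(x)$ (because $Q_\R$ preserves $x$ modulo $\fu_\R(x)$), one writes $\nu_k\Ad(g_k)(x)=\Ad(k_k)(\nu_k x+\nu_k w_k)$ with $w_k\in\fu_\R(x)$. Extracting a subsequence with $k_k\to k_\infty\in K_\R$ (by compactness) and noting $\nu_k x\to 0$, we find $\Ad(k_\infty^{-1})z=\lim_k\nu_k w_k\in\fu_\R(x)$, so $z\in K_\R\cdot\fu_\R(x)\subset G_\R\cdot\fu_\R(x)$. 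The concluding inclusion in $\overline{\mathcal{O}_{\mathrm{Rich}}(x)}\cap\fg_\R$ is Theorem \ref{posconethm}(b).

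The main obstacle is the equidimensionality in (a): a priori the various $G_\R$-orbits appearing in $G_\R\cdot\fu_\R(x)$ could have different dimensions, and one must genuinely exploit both that $\mathcal{O}_{\mathrm{Rich}}(x)$ meets $\fu_\R(x)$ in a Zariski open dense subset and the dimension coincidence $\dim_\R G_\R\cdot u=\dim_\C G\cdot u$ for $u\in\fu_\R(x)^\circ$ to force all the top-dimensional $G_\R$-orbits to be of the same size $2\dim_\R\fu_\R(x)$; the closedness of $G_\R\cdot\fu_\R(x)$ itself, by contrast, is a fairly direct consequence of the compactness of the real flag variety.
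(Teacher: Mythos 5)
Your argument is correct in substance, but it takes a genuinely different route from the paper's at the two key points, and it is worth recording what each buys. For the equidimensionality in (a), the paper never leaves $\fg_\R$: it sets $d=\max\{\dim G_\R\cdot z\mid z\in\fu_\R(x)\}$, observes that $\dim G_\R\cdot z=\rank(\ad z)$ is lower semicontinuous, and gets openness and density of the top-dimensional locus by a perturbation along a line; it never identifies $d$. You instead invoke Richardson's theorem over $\C$ together with the Zariski density of the real form $\fu_\R(x)$ in $\fu(x)$ and the equality $\dim_\R G_\R\cdot u=\dim_\C G\cdot u$, which identifies the generic locus as $\fu_\R(x)\cap\mathcal{O}_{\mathrm{Rich}}(x)$ and gives the sharper conclusions $d=2\dim_\R\fu_\R(x)$ and that the orbits $\mathcal{O}_1,\ldots,\mathcal{O}_k$ are real forms of $\mathcal{O}_{\mathrm{Rich}}(x)$ --- a refinement Theorem \ref{T4} does not state and which the paper only approaches under extra hypotheses (Proposition \ref{PropReal}); your density argument in particular shows that at least one real form of $\mathcal{O}_{\mathrm{Rich}}(x)$ always meets $\fu_\R(x)$, though of course not that all of them do. For (b), the forward inclusion is the same statement ($\nu x+\fu_\R(x)\subset G_\R\cdot(\nu x)$) proved differently: the paper runs an explicit graded induction correcting one eigencomponent at a time, while you argue that every $U_\R$-orbit in the affine slice is open and conclude by connectedness (your appeal to Kostant--Rosenlicht over $\R$ is heavier than needed; openness of all orbits in the connected slice already suffices). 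For the reverse inclusion, the paper uses Lemma \ref{L6.2-new} plus the fact that $\nu x+z'$ with $\nu>0$ is never nilpotent, whereas your $G_\R=K_\R Q_\R$ decomposition with sequential compactness of $K_\R$ lands the limit point directly in $\Ad(k_\infty)\fu_\R(x)$ and avoids the nilpotency step entirely; both rest on the same structural input.

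Two justifications are stated too glibly and should be filled in. Properness of $\Psi:G_\R\times_{Q_\R}\fu_\R(x)\to\fg_\R$ does not follow merely from the base $G_\R/Q_\R$ being compact; one must argue (exactly as in the paper's Lemma \ref{L6.2-new}) that if $\Ad(g_j)u_j$ stays bounded and $g_jQ_\R$ converges, then, after adjusting representatives, $u_j=\Ad(g_j^{-1})(\Ad(g_j)u_j)$ is bounded in the closed subspace $\fu_\R(x)$. Likewise, the parenthetical ``$Q_\R$ preserves $x$ modulo $\fu_\R(x)$'' is true but needs the Langlands decomposition of real parabolic subgroups (the Levi factor of $Q_\R$ centralizes the split center of $\fg_\R^x$, hence centralizes $x$); this is the same unproved structural fact the paper uses when it writes $P_\R=U_\R Z_{G_\R}(x)$, so cite it. Finally, the claim that each connected component of $\fu_\R(x)^\circ$ is a single open $Q_\R$-orbit requires the tangent-space computation $[\fq_\R,u]=\fu_\R(x)$ (which does follow from Richardson over $\C$ and the real-form dimension count), but you do not actually need it: finiteness of the set of nilpotent $G_\R$-orbits already makes your pigeonhole argument work.
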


\begin{proof}
(a) The fact that $G_\R\cdot\fu_\R(x)$ is closed follows from Lemma \ref{L6.2-new} below. Let
\[d=\max\{\dim G_\R\cdot z \mid z\in\fu_\R(x)\}.\]
We claim:
\begin{equation}
\label{6.4-new}
\mbox{the subset $Z_d:=\{z\in\fu_\R(x) \mid \dim G_\R\cdot z=d\}$ is open and dense in $\fu_\R(x)$.}
\end{equation}
Indeed, we first note that
\[
\dim G_\R\cdot z=\dim \fg_\R-\dim\{y\in\fg_\R \mid [z,y]=0\}=\rank(\ad(z))
\]
hence we have $\rank(\ad(z))\leq d$ for all $z\in\fu_\R(x)$ and
\[Z_d=\{z\in\fu_\R(x) \mid \rank(\ad(z))\geq d\}.\]
This implies that $Z_d$ is open in $\fu_\R(x)$. Moreover, if we fix $z_0\in Z_d$ (by assumption, $Z_d$ is nonempty) and take any $z\in \fu_\R(x)$, then we have $\rank(\ad(z+\epsilon z_0))\geq d$ whenever $\epsilon>0$ is small enough, hence $Z_d$ is dense in $\fu_\R(x)$. The claim made in (\ref{6.4-new}) is justified.

We infer that $G_\R\cdot Z_d$ is dense in $G_\R\cdot\fu_\R(x)$. By definition, $G_\R\cdot Z_d$ is $G_\R$-stable, nilpotent, hence it is a union of finitely many nilpotent orbits:
\[G_\R\cdot Z_d=\mathcal{O}_1\cup\ldots\cup\mathcal{O}_k,\qquad\mbox{whence}\quad G_\R\cdot\fu_\R(x)=\overline{G_\R\cdot Z_d}=\overline{\mathcal{O}_1}\cup\ldots\cup\overline{\mathcal{O}_k}.\]
Moreover, since $\mathcal{O}_j\subset Z_d$, we have $\dim\mathcal{O}_j=d$ for all $j\in\{1,\ldots,k\}$. The proof of part (a) is complete.

(b) The last inclusion is clear.
First we show that
\begin{equation}
\label{T4:inclusion-new}
\lim_{\nu\to 0^+}G_\mathbb{R}\cdot(\nu x)\subset G_\R\cdot \mathfrak{u}_\mathbb{R}(x).
\end{equation}
Let $P_\R\subset G_\R$ be the parabolic subgroup of Lie algebra $\fp_\R=\bigoplus\limits_{\mu\geq 0}(\fg_\R)_\mu$. Hence $\fu_\R(x)$ is the nilradical of $\fp_\R$.
First we note that
\[\lim_{\nu\to 0^+}G_\R\cdot(\nu x)=\bigcap_{\epsilon>0}\overline{\bigcup_{\nu\in(0,\epsilon)}G_\R\cdot(\nu x)}\subset \overline{\bigcup_{\nu\in(0,1)}G_\R\cdot(\nu x)}=\overline{G_\R\cdot \{\nu x \mid 0<\nu<1\}}.\]
Since the subset $\overline{P_\R\cdot\{\nu x \mid 0<\nu<1\}}\subset\fg_\R$ is closed and $P_\R$-stable, its image by $G_\R$ is closed (see Lemma \ref{L6.2-new}).
This yields
\[\lim_{\nu\to0^+}G_\R\cdot(\nu x)\subset G_\R\cdot\overline{P_\R\cdot\{\nu x \mid 0<\nu<1\}}.\]
Moreover, we have $P_\R=U_\R Z_{G_\R}(x)$, where $U_\R=\exp(\fu_\R(x))$ and $Z_{G_\R}(x)$ is the stabilizer of $x$ in $G_\R$. This implies that
\[P_\R\cdot\{\nu x \mid 0<\nu<1\}= U_\R\cdot\{\nu x \mid 0<\nu<1\}.\]
Every $u\in U_\R$ can be written as $u=\exp(z)$ with $z\in \fu_\R(x)$, and we have
\[\mathrm{Ad}(u)(x)=\sum_{k\geq 0}\frac{1}{k!}\mathrm{ad}(z)^k(x)=x+z'\quad\mbox{with}\quad z'\in\fu_\R(x)\]
since $\fu_\R(x)$ is both $\mathrm{ad}(x)$-stable and $\mathrm{ad}(\fu_\R(x))$-stable. Whence
\[P_\R\cdot\{\nu x \mid 0<\nu<1\}\subset \{\nu x+z' \mid 0<\nu<1,\ z'\in \fu_\R(x)\}.\]
Altogether, we get the inclusion
\[\lim_{\nu\to0^+}G_\R\cdot(\nu x)\subset G_\R\cdot\{\nu x+z' \mid 0\leq\nu\leq 1,\ z'\in\fu_\R(x)\}.\]
We have also that $\lim\limits_{\nu\to0^+}G_\R\cdot(\nu x)$ is contained in the nilpotent cone $\mathcal{N}(\fg_\R)$ (see Proposition \ref{basicprop}), and an element of the form $\nu x+z'$ can be nilpotent only if $\nu=0$. Therefore, (\ref{T4:inclusion-new}) is established.

It remains to show the inclusion
\begin{equation}
\label{T4:inclusion}
\mathfrak{u}_\mathbb{R}(x)\subset \lim_{\nu\to 0^+}G_\mathbb{R}\cdot(\nu x).
\end{equation}

We claim that
\begin{equation}
\label{T4:claim}
\{x+z\mid z\in\mathfrak{u}_\mathbb{R}(x)\}\subset G_\mathbb{R}\cdot x.
\end{equation}
Once we have shown (\ref{T4:claim}), we can deduce that, for all
$z\in\mathfrak{u}_\mathbb{R}(x)$, all $\nu>0$, the element $x+\frac{1}{\nu}z$ belongs to $G_\mathbb{R}\cdot x$,
so that $\nu x+z$ belongs to $G_\mathbb{R}\cdot(\nu x)$. This yields $z\in\lim\limits_{\nu\to 0^+}G_\mathbb{R}\cdot(\nu x)$. Since $z\in\mathfrak{u}_\mathbb{R}(x)$ is arbitrary, this establishes (\ref{T4:inclusion}). Therefore, it remains to show (\ref{T4:claim}).

For proving (\ref{T4:claim}), let $z\in\mathfrak{u}_\mathbb{R}(x)$. 
Let $(0<)\mu_1<\ldots<\mu_N$ be the list of all positive eigenvalues of $\mathrm{ad}(x)$.
We write
$z=z_1+z_2+\ldots+z_{N}$ 
with $z_j\in(\mathfrak{g}_\mathbb{R})_{\mu_j}$.
We construct an element $g\in G_\mathbb{R}$ such that $\mathrm{Ad}(g)(x)=x+z$.
Arguing by induction, we show that, for all $k=0,\ldots,N$, there is $g^{(k)}\in G_\mathbb{R}$
such that $\mathrm{Ad}(g^{(k)})(x)=x+z^{(k)}_1+\ldots+z^{(k)}_{N}$
with $z^{(k)}_{j}\in(\fg_\R)_{\mu_j}$ for all $j$ and $z^{(k)}_{j}=z_{j}$ whenever $j\leq k$.
If $k=0$, then $g^{(0)}=\mathrm{id}$ fulfills the required property.
Assume the construction done until rank $k<N$.
Let $y=z_{k+1}-z^{(k)}_{k+1}\in(\fg_\R)_{\mu_{k+1}}$.
Thus $[x,y]=\mu_{k+1}y$ and $[y,z_{j}^{(k)}]\in\bigoplus\limits_{\ell>k+1}(\fg_\R)_{\mu_\ell}$ for all $j\in\{1,\ldots,N\}$. 
Letting $t=\frac{-1}{\mu_{k+1}}$,
this implies that 
\begin{eqnarray*}
\mathrm{Ad}(\exp(ty)g^{(k)})(x) & = & \sum_{\ell\geq 0}\frac{t^\ell}{\ell!}\mathrm{ad}(y)^\ell( x+z^{(k)}_1+\ldots+z^{(k)}_{N}) \\
 & = & x+z^{(k)}_1+\ldots+z^{(k)}_{N}-t[x,y]+z' \\
 & = & x+z_1+\ldots+z_{k}+z_{k+1}+z''
\end{eqnarray*}
for some $z',z''\in\bigoplus\limits_{\ell>k+1}(\fg_\R)_{\mu_\ell}$.
This establishes the property at the rank $k+1$, and the proof of the theorem is complete.
\end{proof}

The above proof uses the following well-known fact. We give a proof for the sake of completeness.

\begin{lem}
\label{L6.2-new}
Let $P_\R\subset G_\R$ be a parabolic subgroup.
If $M\subset \fg_\R$ is a closed and $P_\R$-stable subset, then $G_\R\cdot M$ is closed.
\end{lem}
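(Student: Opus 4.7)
The plan is to exploit the compactness of the flag manifold $G_\R/P_\R$. Since $P_\R$ is a parabolic subgroup of a real reductive Lie group, it contains a minimal parabolic subgroup and hence $G_\R/P_\R$ is compact (Iwasawa decomposition implies $G_\R = K_\R P_\R$, so the quotient is the image of the compact group $K_\R$).

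Concretely, I would take a sequence $\{y_n\}_{n\geq 1}\subset G_\R\cdot M$ converging to some $y\in\fg_\R$, and write $y_n=\mathrm{Ad}(g_n)(m_n)$ with $g_n\in G_\R$ and $m_n\in M$. Passing to the quotient $G_\R/P_\R$ (which is compact), I can extract a subsequence such that the cosets $g_nP_\R$ converge to some $gP_\R$. Using a local continuous section of the principal $P_\R$-bundle $G_\R\to G_\R/P_\R$ near $gP_\R$, I can find, for $n$ large, a decomposition $g_n=h_n p_n$ with $h_n\in G_\R$, $p_n\in P_\R$, and $h_n\to g$. Then
\[
\mathrm{Ad}(p_n)(m_n)=\mathrm{Ad}(h_n^{-1})(y_n)\xrightarrow[n\to+\infty]{}\mathrm{Ad}(g^{-1})(y).
\]
Since $M$ is $P_\R$-stable, the sequence $\{\mathrm{Ad}(p_n)(m_n)\}$ lies in $M$, and since $M$ is closed, its limit $m:=\mathrm{Ad}(g^{-1})(y)$ belongs to $M$. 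Therefore $y=\mathrm{Ad}(g)(m)\in G_\R\cdot M$, which shows that $G_\R\cdot M$ is closed.

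The only potential obstacle is the extraction step, namely the use of the local continuous section of $G_\R\to G_\R/P_\R$, but this is standard since $P_\R$ is a closed subgroup of a Lie group and the projection is a locally trivial principal $P_\R$-bundle. An equivalent and perhaps cleaner way to phrase the same argument is to factor the map $G_\R\times M\to \fg_\R$, $(g,m)\mapsto \mathrm{Ad}(g)(m)$, through the associated bundle $G_\R\times_{P_\R}M\to\fg_\R$: the projection $G_\R\times_{P_\R}M\to G_\R/P_\R$ is a (closed) fiber bundle with fiber $M$, so that any closed subset of $G_\R\times_{P_\R}M$ which is proper over $G_\R/P_\R$ has closed image in $\fg_\R$; compactness of $G_\R/P_\R$ then yields the result.
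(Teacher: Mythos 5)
Your proof is correct and follows essentially the same route as the paper's: both extract a convergent subsequence of cosets in the compact flag manifold $G_\R/P_\R$, adjust the group elements by elements of $P_\R$ (you via an explicit local section of $G_\R\to G_\R/P_\R$, the paper by choosing $p_\ell\in P_\R$ with $g_{k_\ell}p_\ell$ convergent), and then use $P_\R$-stability and closedness of $M$ to conclude. The associated-bundle reformulation you mention is just a repackaging of the same argument, so there is nothing substantively different to compare.
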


\begin{proof}
Let
$\{\mathrm{Ad}(g_k)(m_k)\}_{k\geq 1}$ be a sequence of elements of $G_\R\cdot M$, converging to some limit $m_0$. Since $G_\R/P_\R$ is compact, the sequence $\{g_kP_\R\}_{k\geq 1}$ has a convergent subsequence $\{g_{k_\ell}P_\R\}_{\ell\geq 1}$ with limit $g_0P_\R\in G_\R/P_\R$. Hence there is a sequence $\{p_\ell\}_{\ell\geq 1}\subset P_\R$ such that $g_0=\lim\limits_{\ell\to+\infty} g_{k_\ell}p_\ell$. 
Let $m'_\ell=\mathrm{Ad}(p_\ell^{-1})(m_{k_\ell})\in M$. Then
\[m'_\ell=\mathrm{Ad}(p_\ell^{-1})(m_{k_\ell})=\mathrm{Ad}(p_\ell^{-1}g_{k_\ell}^{-1})(\mathrm{Ad}(g_{k_\ell})(m_{k_\ell}))\stackrel{\ell\to+\infty}{\longrightarrow} \mathrm{Ad}(g_0^{-1})(m_0).\]
Since $M$ is closed, we deduce that $m_0\in\mathrm{Ad}(g_0)(M)$, hence $m_0\in G_\R\cdot M$.
\end{proof}



In the case of $\mathrm{SL}_n(\mathbb{R})$, we obtain the following refined description of the limit. Note that the inclusion in Theorem \ref{T4} becomes an equality in this case.

\begin{cor}
\label{corollary-SLn}
Assume that $G_\mathbb{R}=\mathrm{SL}_n(\mathbb{R})$.
Then, for all hyperbolic semisimple element $x\in\mathfrak{g}_\mathbb{R}$, we have
\[
\lim_{\nu\to 0^+}G_\mathbb{R}\cdot(\nu x)
=(G\cdot \mathfrak{u}(x))\cap\mathfrak{g}_\mathbb{R}=\overline{\mathcal{O}_{\mathrm{Rich}}(x)}\cap\mathfrak{g}_\mathbb{R}.
\]
\end{cor}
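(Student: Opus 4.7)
Plan.

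The inclusion $G_\mathbb{R}\cdot\mathfrak{u}_\mathbb{R}(x)\subseteq \overline{\mathcal{O}_{\mathrm{Rich}}(x)}\cap\mathfrak{g}_\mathbb{R}$ is already provided by Theorem \ref{T4}\,(b) together with the identification $\overline{\mathcal{O}_{\mathrm{Rich}}(x)}=G\cdot\mathfrak{u}(x)$ from (\ref{Richardson-hyp}), so the plan reduces to establishing the reverse inclusion $\overline{\mathcal{O}_\mathrm{Rich}(x)}\cap\mathfrak{g}_\mathbb{R}\subseteq G_\mathbb{R}\cdot\mathfrak{u}_\mathbb{R}(x)$. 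First, after $G_\mathbb{R}$-conjugation, I would normalize $x$ to the form $x=\mathrm{diag}(\lambda_1I_{n_1},\ldots,\lambda_pI_{n_p})$ with real eigenvalues $\lambda_1>\ldots>\lambda_p$ and $n_1+\cdots+n_p=n$; then $\mathfrak{u}_\mathbb{R}(x)$ consists of the strictly block upper triangular real $n\times n$ matrices of block sizes $(n_1,\ldots,n_p)$, and the Jordan type of $\mathcal{O}_\mathrm{Rich}(x)$ is the partition $\lambda^*$ conjugate to $\lambda=(n_1,\ldots,n_p)$ sorted weakly decreasingly.

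The main step is then the following. Given $y\in\overline{\mathcal{O}_\mathrm{Rich}(x)}\cap\mathfrak{g}_\mathbb{R}$, the matrix $y$ is real nilpotent with Jordan type $\mu$ satisfying $\mu\leq\lambda^*$ in the dominance order, and I would first produce some $z\in\mathfrak{u}_\mathbb{R}(x)$ of Jordan type exactly $\mu$ by a \emph{staircase} construction: realize the parts $\mu_1\geq\ldots\geq\mu_k$ of $\mu$ as $k$ disjoint Jordan chains in $\mathbb{R}^n$, each chain of length $\mu_j$ consisting of $\mu_j$ basis vectors spread across $\mu_j$ diagonal blocks of strictly decreasing index, and sum the corresponding elementary shift matrices. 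The dominance condition $\mu\leq\lambda^*$ is precisely what ensures (by a Hall-type argument) that such disjoint chains can be laid out within the $(n_1,\ldots,n_p)$ block pattern.

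Once $z\in\mathfrak{u}_\mathbb{R}(x)$ has been constructed, the real Jordan normal form theorem provides some $g\in\mathrm{GL}_n(\mathbb{R})$ with $gzg^{-1}=y$; to promote $g$ to an element of $G_\mathbb{R}=\mathrm{SL}_n(\mathbb{R})$ --- the only genuinely real subtlety, since $\mathrm{GL}_n(\mathbb{R})$-orbits of nilpotent matrices can split under $\mathrm{SL}_n(\mathbb{R})$ --- I would set $d:=\det g$, $D:=\mathrm{diag}(d^{-1},1,\ldots,1)$, $g':=gD$, and $z':=D^{-1}zD$. Since $D$ is diagonal, hence block-diagonal with respect to the block pattern, it normalizes $\mathfrak{u}_\mathbb{R}(x)$; thus $z'\in\mathfrak{u}_\mathbb{R}(x)$, $\det g'=1$, and $g'z'(g')^{-1}=gzg^{-1}=y$, yielding $y\in G_\mathbb{R}\cdot\mathfrak{u}_\mathbb{R}(x)$. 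The main obstacle is the combinatorial staircase step: one must verify carefully that the dominance inequality $\mu\leq\lambda^*$ is exactly what allows the $k$ chains to fit inside the block pattern. This is the field-independent combinatorial heart of Richardson's theorem in type A, and the fact that $\mathfrak{g}_\mathbb{R}=\mathfrak{sl}_n(\mathbb{R})$ sidesteps the signature issues encountered for other classical real forms lets the real case run parallel to the complex one.
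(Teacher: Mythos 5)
Your proposal is correct, but it takes a genuinely different route from the paper. Both arguments start from Theorem \ref{T4}, which gives $\lim_{\nu\to 0^+}G_\mathbb{R}\cdot(\nu x)=G_\mathbb{R}\cdot\mathfrak{u}_\mathbb{R}(x)\subset\overline{\mathcal{O}_{\mathrm{Rich}}(x)}\cap\mathfrak{g}_\mathbb{R}$, so only the reverse inclusion is at stake. The paper proves it indirectly: it quotes the existence of a representative of $\mathcal{O}_{\mathrm{Rich}}(x)$ inside $\mathfrak{u}_\mathbb{R}(x)$ (the explicit construction in \cite[\S7.2]{CM}), uses conjugation by the determinant $-1$ diagonal matrix $\sigma$ (which normalizes $\mathfrak{u}_\mathbb{R}(x)$) to reach both $\mathrm{SL}_n(\mathbb{R})$-real forms of the Richardson orbit, and then invokes Djokovi\'c's closure theorem \cite[Theorem 3]{Djokovic} to conclude that every real form of every orbit in $\overline{\mathcal{O}_{\mathrm{Rich}}(x)}$ lies in the closure of a real form of $\mathcal{O}_{\mathrm{Rich}}(x)$, hence in the closed set $G_\mathbb{R}\cdot\mathfrak{u}_\mathbb{R}(x)$. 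You instead prove the inclusion directly, orbit by orbit: for each partition $\mu\leq\lambda^*$ you build a real representative of type $\mu$ inside the strictly block upper triangular space $\mathfrak{u}_\mathbb{R}(x)$ by disjoint Jordan chains, and then transport an arbitrary real $y$ of type $\mu$ onto it by real Jordan form, fixing the determinant by conjugating $z$ with a diagonal matrix that normalizes $\mathfrak{u}_\mathbb{R}(x)$. This buys you independence from Djokovi\'c's closure results (and even from the closedness statement in Theorem \ref{T4}\,(a)), and your determinant trick handles in one stroke the splitting of $\mathrm{GL}_n(\mathbb{R})$-orbits into two $\mathrm{SL}_n(\mathbb{R})$-orbits, which the paper treats via the $\sigma$-involution; the price is the combinatorial step you flag but do not carry out. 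That step is genuine but standard: the existence of the chain layout with row lengths $\mu_1\geq\ldots\geq\mu_k$ and at most one vector per block of sizes $n_1,\ldots,n_p$ is exactly the Gale--Ryser theorem (existence of a $0$--$1$ matrix with row sums $\mu$ and column sums $(n_1,\ldots,n_p)$, which holds if and only if $\mu\leq\lambda^*$), so your ``Hall-type'' appeal is accurate and the plan closes; to make it a complete proof you should either cite Gale--Ryser or write out the greedy filling argument.
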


\begin{proof}
Up to conjugation, we may assume that $x\in\slnR$ is a diagonal matrix.

Recall that for every nilpotent $G$-orbit $\mathcal{O}$ of $\mathfrak{g}=\slnC$,
the intersection $\mathcal{O}\cap\slnR$ is nonempty and consists of exactly
one $\mathrm{GL}_n(\mathbb{R})$-orbit, which splits into at most two $\mathrm{SL}_n(\mathbb{R})$-orbits.
In particular, letting
\[
\sigma=\begin{pmatrix}
1 & 0 & \cdots & 0 \\ 0 & \ddots & \ddots & \vdots \\
\vdots & \ddots & 1 & 0 \\
0 & \cdots & 0 & -1
\end{pmatrix},
\]
we get that $\mathrm{Ad}(\sigma)$ induces an involution on the set of nilpotent $\mathrm{SL}_n(\mathbb{R})$-orbits of $\slnR$,
which switches the two real forms of $\mathcal{O}$ whenever $\mathcal{O}$ has two real forms,
and which stabilizes every orbit which is the sole real form of a complex nilpotent orbit.

It is well known that the Richardson orbit $\mathcal{O}_{\mathrm{Rich}}(x)$ has a representative in $\mathfrak{u}_\mathbb{R}(x)$;
see, for instance, the explicit construction made in \cite[\S7.2]{CM}.
Since $\mathfrak{u}_\mathbb{R}(x)$ is stable by $\mathrm{Ad}(\sigma)$, we deduce that each real form of 
$\mathcal{O}_{\mathrm{Rich}}(x)$ has a representative in $\mathfrak{u}_\mathbb{R}(x)$.
Moreover, if $\mathcal{O}$ is a nilpotent orbit of $\mathfrak{sl}_n(\mathbb{C})$ contained in the closure
of $\mathcal{O}_{\mathrm{Rich}}(x)$, then each real form of $\mathcal{O}$ is contained in the closure of
a real form of $\mathcal{O}_{\mathrm{Rich}}(x)$; this follows from \cite[Theorem 3]{Djokovic}.
The claimed equality ensues.
\end{proof}

\section{A mapping on the set of nilpotent orbits}\label{mapping}

Recall that, by Jacobson--Morozov theorem, every nonzero nilpotent element $e\in\mathfrak{g}_\mathbb{R}$ is 
part of an $\mathfrak{sl}_2$-triple $\{h,e,f\}\subset\mathfrak{g}_\mathbb{R}$. Moreover, two such triples
$\{h,e,f\}$ and $\{h',e,f'\}$ that contain $e$ are conjugate under $G_\mathbb{R}$.
This justifies the following definition.

\begin{Def}
\label{D5.1}
Let $e\in\mathfrak{g}_\mathbb{R}$ be a nonzero nilpotent element
and let $\mathcal{O}=G_\mathbb{R}\cdot e$ be its nilpotent orbit.
Given an $\mathfrak{sl}_2$-triple $\{h,e,f\}$ which contains $e$, we let
\[\mathcal{L}(e)=\lim_{\nu\to 0^+}G_\mathbb{R}\cdot(\nu h).\]
The set $\mathcal{L}(e)$ is a closed subset of the nilpotent cone $\mathcal{N}(\mathfrak{g}_\mathbb{R})$,
which is independent of the choice of $h$. Since $\mathcal{L}(e)$ is in fact independent of the choice of $e\in\mathcal{O}$, we set 
\begin{equation*}
\mathcal{L}(\mathcal{O}):=\mathcal{L}(e).
\end{equation*}
We also define $\mathcal{L}(\{0\})=\mathcal{L}(0)=\{0\}$.
\end{Def}

Note that $\mathcal{L}(\mathcal{O})$ always contains $\overline{\mathcal{O}}$
(see the proof of Proposition \ref{P-existence}).
Hence the definition of the mapping $\mathcal{L}$ is motivated by the fact that it provides us with a limit of semisimple orbits
which contains a specified nilpotent orbit $\mathcal{O}$. Note however that this mapping does not separate the orbit $\mathcal{O}=G_\mathbb{R}\cdot e$
from the other real forms of the complex orbit $G\cdot e$:

\begin{pro}
\label{mappingL-realforms}
For every nilpotent element $e\in\mathfrak{g}_\mathbb{R}$, we have
$(G\cdot e)\cap\mathfrak{g}_\mathbb{R}\subset\mathcal{L}(e)$.
\end{pro}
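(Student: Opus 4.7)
The plan is to reduce the inclusion $(G\cdot e)\cap\fg_\R\subset\mathcal{L}(e)$ to the identity $\mathcal{L}(e)=\mathcal{L}(e')$ for every real form $e'$ of the complex orbit $G\cdot e$, and to obtain this identity by showing that the two corresponding neutral elements are actually $G_\R$-conjugate (not merely $G$-conjugate).

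First I would fix $e'\in(G\cdot e)\cap\fg_\R$, with $e'\neq 0$ (the case $e'=0$ being trivial), and use Jacobson--Morozov to pick a standard triple $\{h',e',f'\}\subset\fg_\R$. The explicit one-parameter curve
\[
\Ad\bigl(\exp\bigl(-\tfrac{1}{2\nu}e'\bigr)\bigr)(\nu h')=\nu h'+e'
\]
used in the proof of Proposition \ref{P-existence} converges to $e'$ as $\nu\to 0^+$, so $e'\in\mathcal{L}(e')=\lim_{\nu\to 0^+}G_\R\cdot(\nu h')$. It therefore suffices to prove $\mathcal{L}(e)=\mathcal{L}(e')$, for which it is enough to show $G_\R\cdot h=G_\R\cdot h'$.

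Next I would establish that $h$ and $h'$ are at least $G$-conjugate. Since $e,e'$ lie in the same complex orbit, pick $g\in G$ with $\Ad(g)e=e'$; then $\Ad(g)\{h,e,f\}$ is an $\mathfrak{sl}_2$-triple in $\fg$ with nilpositive element $e'$. By the standard uniqueness (up to $Z_G(e')^\circ$-conjugation) of the neutral element of an $\mathfrak{sl}_2$-triple containing a prescribed nilpositive (\cite[\S3.4]{CM}), there is $z\in Z_G(e')^\circ$ such that $\Ad(zg)h=h'$. Moreover $h$ and $h'$ are hyperbolic semisimple in $\fg_\R$, since their adjoint actions have integer eigenvalues.

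The main obstacle is the third step, where I would invoke the classical fact that two hyperbolic semisimple elements of $\fg_\R$ which are $G$-conjugate in $\fg$ are necessarily $G_\R$-conjugate. The argument: conjugate $h,h'$ into $\fp_\R$ via $G_\R$ (using that hyperbolic orbits have representatives in $\fp_\R$, \cite[Proposition 2.10]{Vogan}), then into a fixed maximal abelian subspace $\fa_\R\subset\fp_\R$ via $K_\R$; finally, use that two elements of $\fa_\R$ lying in the same $G$-orbit must be conjugate under the restricted Weyl group $W_\R=N_{K_\R}(\fa_\R)/Z_{K_\R}(\fa_\R)$, which is realized inside $G_\R$. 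This ingredient is nontrivial and external to the previous sections, which is why I regard it as the delicate point of the argument.

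Granting this, $h$ and $h'$ are $G_\R$-conjugate, so $G_\R\cdot(\nu h)=G_\R\cdot(\nu h')$ for every $\nu>0$; taking limits gives $\mathcal{L}(e)=\mathcal{L}(e')\ni e'$, which is the desired inclusion.
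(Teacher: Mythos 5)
Your proposal is correct and follows essentially the same route as the paper: both reduce the statement to the $G_\mathbb{R}$-conjugacy of the neutral elements $h,h'$, obtained by combining the $G$-conjugacy of the two $\mathfrak{sl}_2$-triples with the fact that hyperbolic semisimple elements of $\fg_\R$ which are $G$-conjugate are already $G_\R$-conjugate, and then conclude via $G_\R\cdot(\nu h)=G_\R\cdot(\nu h')$ and $e'\in\mathcal{L}(e')$. The paper handles the delicate step you flag simply by citing \cite[\S6]{Knapp} and \cite[Theorem 2.1]{Rothschild}, which is precisely the restricted-Weyl-group statement you sketch, so your argument matches the published proof.
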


\begin{proof}
Let $e'\in(G\cdot e)\cap\mathfrak{g}_\mathbb{R}$, and
let $\{h,e,f\}$ and $\{h',e',f'\}$ be $\mathfrak{sl}_2$-triples in $\mathfrak{g}_\mathbb{R}$ containing $e$ and $e'$.
Since $G\cdot e=G\cdot e'$, these $\mathfrak{sl}_2$-triples are $G$-conjugate, hence so are in particular the
elements $h$ and $h'$.
These elements being semisimple and hyperbolic, they are in fact $G_\mathbb{R}$-conjugate; see, e.g., \cite[\S6]{Knapp} or \cite[Theorem 2.1]{Rothschild}.
This yields the equality $G_\mathbb{R}\cdot(\nu h)=G_\mathbb{R}\cdot(\nu h')$ for all $\nu>0$,
hence $\mathcal{L}(e)=\mathcal{L}(e')$, and therefore $e'\in\mathcal{L}(e)$.
\end{proof}

\begin{ex}
In Example \ref{E1} we have seen that the nilpotent cone $\mathcal{N}(\slR{2})$ consists
of three nilpotent orbits: $\{0\}$, $\mathfrak{O}^+=\mathrm{SL}_2(\mathbb{R})\cdot e$, and $\mathfrak{O}^-=\mathrm{SL}_2(\mathbb{R})\cdot f$, where 
\begin{equation*}
e=\begin{pmatrix} 0 & 1 \\ 0 & 0 \end{pmatrix}\;\;\text{ and }\;\;f=\begin{pmatrix} 0 & 0 \\ 1 & 0 \end{pmatrix}.
\end{equation*}
In this case, we have 
\begin{equation*}
\mathcal{L}(e)=\mathcal{L}(f)=\mathcal{N}(\mathfrak{sl}_2(\mathbb{R})).
\end{equation*}
\end{ex}

In Definition \ref{D5.1}, the semisimple element $h$ is in particular hyperbolic. Hence Theorem \ref{T4} gives information
on the nilpotent set $\mathcal{L}(e)$. As we show in the next subsection, this information is more precise in the case where the nilpotent element $e$ is even.

\subsection{The mapping $\mathcal{L}$ on even nilpotent orbits}

Recall from Section \ref{preliminaries} that an $\mathfrak{sl}_2$-triple $\{h,e,f\}$ is said to be even 
if the eigenvalues of $\mathrm{ad}(h)$ are all even integers; in this case, we also say that $e$ is an even nilpotent element 
and that $G_\mathbb{R}\cdot e$ is an even nilpotent orbit.

\begin{thm}
\label{T-P3}
Let $e\in\mathfrak{g}_\mathbb{R}$ be an even nilpotent element.
Then, we have
\[
\mathcal{L}(e)=\overline{(G\cdot e)\cap\mathfrak{g}_\mathbb{R}}.
\]
In other words, if $\mathcal{O}_1,\ldots,\mathcal{O}_r$ are the real forms of $G\cdot e$, then
\[
\mathcal{L}(e)=\overline{\mathcal{O}_1}\cup\ldots\cup\overline{\mathcal{O}_r}.
\]
\end{thm}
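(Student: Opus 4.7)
The plan is to apply Theorem \ref{T4} to the hyperbolic semisimple element $h$ of a chosen $\mathfrak{sl}_2$-triple $\{h,e,f\}\subset\mathfrak{g}_\mathbb{R}$, and to exploit the evenness of $e$ through the identity $\mathcal{O}_{\mathrm{Rich}}(h)=G\cdot e$ recorded in Section \ref{preliminaries}. Theorem \ref{T4}\,(b) then yields directly
\[
\mathcal{L}(e)=G_\mathbb{R}\cdot\mathfrak{u}_\mathbb{R}(h)=\overline{\mathcal{O}_1}\cup\cdots\cup\overline{\mathcal{O}_k}\subset\overline{G\cdot e}\cap\mathfrak{g}_\mathbb{R},
\]
where the nilpotent orbits $\mathcal{O}_1,\ldots,\mathcal{O}_k$ share a common dimension $d$. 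The theorem reduces to identifying this collection with the set of real forms of $G\cdot e$.

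One inclusion is free: by Proposition \ref{mappingL-realforms}, $(G\cdot e)\cap\mathfrak{g}_\mathbb{R}\subset\mathcal{L}(e)$, and since $\mathcal{L}(e)$ is closed, passing to closures gives $\overline{(G\cdot e)\cap\mathfrak{g}_\mathbb{R}}\subset\mathcal{L}(e)$. For the reverse inclusion I would first identify $d$ via a dimension squeeze. Since $[h,e]=2e$, we have $e\in\mathfrak{u}_\mathbb{R}(h)$, so $d\geq\dim_\mathbb{R} G_\mathbb{R}\cdot e$; and the standard fact that $\mathfrak{g}^e=\mathfrak{g}_\mathbb{R}^e\otimes\mathbb{C}$ gives $\dim_\mathbb{R} G_\mathbb{R}\cdot e=\dim_\mathbb{C} G\cdot e$. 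Conversely, because $\overline{G\cdot e}$ is a complex algebraic variety stable under the complex conjugation defining $\mathfrak{g}_\mathbb{R}$, its real points satisfy $\dim_\mathbb{R}\bigl(\overline{G\cdot e}\cap\mathfrak{g}_\mathbb{R}\bigr)\leq\dim_\mathbb{C} G\cdot e$, and the inclusion $\mathcal{O}_j\subset\overline{G\cdot e}\cap\mathfrak{g}_\mathbb{R}$ forces $d\leq\dim_\mathbb{C} G\cdot e$. Hence $d=\dim_\mathbb{C} G\cdot e$.

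With this equality in hand, no $\mathcal{O}_j$ can lie in the boundary $\overline{G\cdot e}\setminus G\cdot e$, which is a union of complex $G$-orbits of strictly smaller dimension (the same complexification estimate applied to these smaller orbits would give $\dim\mathcal{O}_j<d$). Therefore every $\mathcal{O}_j$ is a real form of $G\cdot e$, proving $\mathcal{L}(e)\subset\overline{(G\cdot e)\cap\mathfrak{g}_\mathbb{R}}$. Finally, given any real form $\mathcal{O}'$ of $G\cdot e$, Proposition \ref{mappingL-realforms} places it inside $\mathcal{L}(e)=\bigcup_j\overline{\mathcal{O}_j}$; since $\mathcal{O}'$ is a single $G_\mathbb{R}$-orbit of dimension $d$, it cannot meet any $\overline{\mathcal{O}_j}\setminus\mathcal{O}_j$ (smaller dimension), so a point of $\mathcal{O}'$ lies in some $\mathcal{O}_j$ and disjointness of orbits forces $\mathcal{O}'=\mathcal{O}_j$. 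The only genuine technical point is the dimension bound $\dim_\mathbb{R}\bigl(\overline{G\cdot e}\cap\mathfrak{g}_\mathbb{R}\bigr)\leq\dim_\mathbb{C} G\cdot e$, which pins down $d$; everything else is bookkeeping with Theorem \ref{T4}, Proposition \ref{mappingL-realforms}, and the characterization of evenness as $\mathcal{O}_{\mathrm{Rich}}(h)=G\cdot e$.
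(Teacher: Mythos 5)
Your proposal is correct and follows essentially the same route as the paper: apply Theorem \ref{T4} to the neutral element $h$, use evenness via $\mathcal{O}_{\mathrm{Rich}}(h)=G\cdot e$ (so $G_\mathbb{R}\cdot\mathfrak{u}_\mathbb{R}(h)\subset\overline{G\cdot e}\cap\mathfrak{g}_\mathbb{R}$), get one inclusion from Proposition \ref{mappingL-realforms}, and conclude with the equidimensionality from Theorem \ref{T4}\,(a). Your dimension squeeze $d=\dim_\mathbb{C}G\cdot e$ simply makes explicit the step the paper compresses into ``we infer that $\mathcal{O}'_1,\ldots,\mathcal{O}'_k$ have to be the real forms of $G\cdot e$.''
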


\begin{proof}
The inclusion $\supset$ follows from Proposition \ref{mappingL-realforms}.
For the second inclusion, let $\{h,e,f\}\subset\mathfrak{g}_\mathbb{R}$ be an even $\mathfrak{sl}_2$-triple
containing $e$.
We have
\[\fg=\bigoplus_{j\in\Z}\fg(2j)\supset\fg_\R=\bigoplus_{j\in\Z}\fg_\R(2j)\quad\mbox{where}\quad\fg(2j)=\{z\in\fg \mid [h,z]=2jz\}\supset\fg_\R(2j)=\{z\in\fg_\R \mid [h,z]=2jz\}.\]
As in Section \ref{hyperbolic}, we consider
\[\fu_\R(h)=\bigoplus_{j>0}\fg_\R(2j)\quad\mbox{and}\quad \fu(h)=\bigoplus_{j>0}\fg(2j).\]
The fact that $e$ is even implies that $G\cdot\fu(h)=\overline{G\cdot e}$.
The semisimple element $h$ is in particular hyperbolic,
and Theorem \ref{T4}, applied with $x=h$, yields the equalities
\[\mathcal{L}(e)=G_\R\cdot\fu_\R(h)=\overline{\mathcal{O}'_1}\cup\ldots\cup\overline{\mathcal{O}'_k},\]
where $\mathcal{O}'_1,\ldots,\mathcal{O}'_k\subset\fg_\R$ are nilpotent $G_\R$-orbits of the same dimension. We thus obtain:
\[\overline{(G\cdot e)\cap\fg_\R}\subset \mathcal{L}(e)=G_\R\cdot\fu_\R(h)\subset(G\cdot \fu(h))\cap\fg_\R=\overline{G\cdot e}\cap\fg_\R,\]
and we infer that $\mathcal{O}'_1,\ldots,\mathcal{O}'_k$ have to be the real forms of $G\cdot e$. The claimed equality $\mathcal{L}(e)=\overline{(G\cdot e)\cap\fg_\R}$ ensues.
\end{proof}

\begin{rem}
{\rm (a)}
In general $\mathcal{L}(e)=\overline{(G\cdot e)\cap\mathfrak{g}_\mathbb{R}}$ can be strictly contained in 
$\overline{G\cdot e}\cap\mathfrak{g}_\mathbb{R}$. 
Take for instance $G_\mathbb{R}=\mathrm{SU}(2,2)$. The nilpotent $\mathrm{SL}_4(\mathbb{C})$-orbit
$\mathcal{O}_\mathbf{d}$ parametrized by the partition $\mathbf{d}=(3,1)$ has two real forms, parametrized by the signed Young
diagrams (see \cite[Theorem 9.3.3]{CM} or Section \ref{section-8.3} below):
\[\young(+-+,-)\;\;\text{ and}\;\;\young(-+-,+).
\]
Moreover, $\mathcal{O}_\mathbf{d}$ is an even orbit. 
The nilpotent $\mathrm{SU}(2,2)$-orbits parametrized by the signed Young diagrams
\[
\young(+-,+-)\;\;\text{ and }\;\;\young(-+,-+)
\]
are both contained in $\overline{\mathrm{SL}_4(\mathbb{C})\cdot e}\cap\mathfrak{su}(2,2)$, but not in 
$\overline{(\mathrm{SL}_4(\mathbb{C})\cdot e)\cap\mathfrak{su}(2,2)}$; see, e.g., \cite{Djokovic}.

In the case of $G_\R=\mathrm{SL}_n(\R)$, however, the equality $\overline{(G\cdot e)\cap\fg_\R}=\overline{G\cdot e}\cap\fg_\R$ is true for all nilpotent element $e$, in view of the parametrization of real forms and the description of their closures given in \cite{Djokovic}; see also Theorem \ref{noneventhm2} below.

{\rm (b)}
Let us consider a symmetric subgroup $K\subset G$ compatible with the real form $G_\mathbb{R}$, and the corresponding decomposition $\mathfrak{g}=\mathfrak{k}\oplus\mathfrak{p}$. Nishiyama describes the (complex) asymptotic cone 
$\AC(K\cdot (i(e'-f')))$
where $\{h',e',f'\}$ is a normal triple with $e',f'$ even nilpotent \cite{Nishiyama}.
Thus Theorem \ref{T-P3} above appears to be a real counterpart of \cite[Theorem 0.2]{Nishiyama}. Specifically, in \cite{Nishiyama}, it is shown that the above asymptotic cone coincides with the closure of $(G\cdot e)\cap\mathfrak{p}$, and is in general
strictly contained in $\overline{G\cdot e}\cap\mathfrak{p}$. The equidimensionality of the complex cone (which is the counterpart of Theorem \ref{T4}\,{\rm (a)}) is used as a key ingredient in the proof of \cite[Theorem 0.2]{Nishiyama}. 
\end{rem}

\subsection{The mapping $\mathcal{L}$ on arbitrary nilpotent orbits}
Theorem \ref{T4} also gives information on $\mathcal{L}(e)$ in the case where $e$ is not even.
The key fact in the proof of Theorem \ref{T-P3} is that all the real forms of $G\cdot e$ have a representative in the space $\fu_\R(h)$ and, when $e$ is even, $G\cdot e=\mathcal{O}_{\mathrm{Rich}}(h)$ is in addition the largest nilpotent orbit that intersects $\fu_\R(h)$.

If $e\in\fg_\R$ is an arbitrary nilpotent element, then Theorem \ref{T4} and Proposition \ref{mappingL-realforms} imply that
\begin{equation}
\label{7.1-new}
\overline{(G\cdot e)\cap\fg_\R}\subset\mathcal{L}(e)=G_\R\cdot\fu_\R(h)\subset\overline{\mathcal{O}_{\mathrm{Rich}}(h)}\cap\fg_\R,
\end{equation}
where $\mathcal{L}(e)$ is equidimensional.
Here the difficulty is that, if $e$ is not even, then $G\cdot e$ does not coincide with the Richardson orbit $\mathcal{O}_{\mathrm{Rich}}(h)$, and we cannot guarantee that $\mathcal{O}_{\mathrm{Rich}}(h)$ has real forms or that these real forms have representatives in $\fu_\R(h)$.
We can however formulate the following statement.

\begin{pro}\label{PropReal}
Let $e\in\fg_\R$ be an arbitrary nilpotent element. Assume that the nilpotent $G$-orbit $\mathcal{O}_{\mathrm{Rich}}(h)$ has real forms which all intersect $\fu_\R(h)$. Then,
\[\mathcal{L}(e)=\overline{\mathcal{O}_{\mathrm{Rich}}(h)\cap\fg_\R}.\]
\end{pro}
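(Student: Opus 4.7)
The plan is to invoke Theorem \ref{T4} with $x=h$ (a hyperbolic semisimple element of $\fg_\R$ by the very construction of an $\fs\fl_2$-triple) and then identify the maximal-dimensional nilpotent $G_\R$-orbits appearing in $\mathcal{L}(e)=G_\R\cdot\fu_\R(h)$ with the real forms $\mathcal{O}_1,\ldots,\mathcal{O}_r$ of $\mathcal{O}_{\mathrm{Rich}}(h)$ provided by the hypothesis. More precisely, Theorem \ref{T4} gives the decomposition
\[
\mathcal{L}(e)=G_\R\cdot\fu_\R(h)=\overline{\mathcal{O}_1'}\cup\cdots\cup\overline{\mathcal{O}_k'}\subset\overline{\mathcal{O}_{\mathrm{Rich}}(h)}\cap\fg_\R,
\]
where the $\mathcal{O}_i'$ are nilpotent $G_\R$-orbits, all of the common dimension $d:=\max\{\dim G_\R\cdot z\mid z\in\fu_\R(h)\}$, and the plan is to show $\{\mathcal{O}_1',\ldots,\mathcal{O}_k'\}=\{\mathcal{O}_1,\ldots,\mathcal{O}_r\}$.

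The key numerical step is to check that $d=\dim_\C\mathcal{O}_{\mathrm{Rich}}(h)$. For any $z\in\fg_\R$, the complexification identity $\fz_\fg(z)=\fz_{\fg_\R}(z)\oplus i\fz_{\fg_\R}(z)$ yields $\dim_\R G_\R\cdot z=\dim_\C G\cdot z$. Fix a real form $\mathcal{O}_j$ of $\mathcal{O}_{\mathrm{Rich}}(h)$, which by hypothesis meets $\fu_\R(h)$ at some point $z_j$. Then $\dim\mathcal{O}_j=\dim G_\R\cdot z_j\leq d$, while simultaneously $\dim\mathcal{O}_j=\dim_\C\mathcal{O}_{\mathrm{Rich}}(h)$. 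On the other hand, from $\mathcal{L}(e)\subset\overline{\mathcal{O}_{\mathrm{Rich}}(h)}$ we get $d\leq\dim_\C\mathcal{O}_{\mathrm{Rich}}(h)$. Combining these inequalities forces
\[
d=\dim\mathcal{O}_j=\dim_\C\mathcal{O}_{\mathrm{Rich}}(h)\quad\text{for every real form $\mathcal{O}_j$.}
\]

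With this equality in hand, the matching of families is straightforward. Each $\mathcal{O}_j$ is a $G_\R$-orbit of dimension $d$ contained in $\mathcal{L}(e)=\bigcup_i\overline{\mathcal{O}_i'}$; since every orbit strictly contained in $\overline{\mathcal{O}_i'}\setminus\mathcal{O}_i'$ has dimension $<d$, the only orbits of dimension $d$ in $\mathcal{L}(e)$ are the $\mathcal{O}_i'$'s themselves, so $\mathcal{O}_j=\mathcal{O}_{i(j)}'$ for some index $i(j)$. Conversely, each $\mathcal{O}_i'\subset\overline{\mathcal{O}_{\mathrm{Rich}}(h)}\cap\fg_\R$ has complex dimension $d=\dim_\C\mathcal{O}_{\mathrm{Rich}}(h)$, which prevents it from lying in the boundary $\overline{\mathcal{O}_{\mathrm{Rich}}(h)}\setminus\mathcal{O}_{\mathrm{Rich}}(h)$; hence $\mathcal{O}_i'\subset\mathcal{O}_{\mathrm{Rich}}(h)\cap\fg_\R$, so $\mathcal{O}_i'$ is one of the real forms. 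Thus $\{\mathcal{O}_1',\ldots,\mathcal{O}_k'\}=\{\mathcal{O}_1,\ldots,\mathcal{O}_r\}$, and since $\mathcal{O}_{\mathrm{Rich}}(h)\cap\fg_\R=\mathcal{O}_1\cup\cdots\cup\mathcal{O}_r$ by definition of real forms, we conclude
\[
\mathcal{L}(e)=\overline{\mathcal{O}_1}\cup\cdots\cup\overline{\mathcal{O}_r}=\overline{\mathcal{O}_{\mathrm{Rich}}(h)\cap\fg_\R}.
\]

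The only delicate point is the dimension bookkeeping in the middle paragraph: one must use both the general identity $\dim_\R G_\R\cdot z=\dim_\C G\cdot z$ for $z\in\fg_\R$ and the equidimensionality statement of Theorem \ref{T4}\,(a) in the same breath, together with the hypothesis that \emph{every} real form of $\mathcal{O}_{\mathrm{Rich}}(h)$ meets $\fu_\R(h)$, to secure $d=\dim_\C\mathcal{O}_{\mathrm{Rich}}(h)$; once this is done, the inclusion/equality of the two finite families of orbits is essentially automatic.
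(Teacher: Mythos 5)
Your proof is correct and follows essentially the same route as the paper: apply Theorem \ref{T4} with $x=h$, use the hypothesis that every real form of $\mathcal{O}_{\mathrm{Rich}}(h)$ meets $\fu_\R(h)$, and exploit the equidimensionality of $G_\R\cdot\fu_\R(h)$ together with $\dim_\R G_\R\cdot z=\dim_\C G\cdot z$ to identify the orbits in the limit with the real forms. You merely make explicit the dimension bookkeeping that the paper compresses into ``arguing as in the proof of Theorem \ref{T-P3}.''
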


\begin{proof}
The assumption made in the proposition combined with (\ref{7.1-new}) implies that
\[\overline{\mathcal{O}_{\mathrm{Rich}}(h)\cap\fg_\R}\subset\mathcal{L}(e)=G_\R\cdot\fu_\R(h)\subset \overline{\mathcal{O}_{\mathrm{Rich}}(h)}\cap\fg_\R.\]
Then, by arguing as in the proof of Theorem \ref{T-P3}, we can deduce the claimed equality by invoking the equidimensionality of $G_\R\cdot\fu_\R(h)$ (Theorem \ref{T4}).
\end{proof}

The condition in the proposition is fulfilled if $e$ is even (by Proposition \ref{mappingL-realforms}), in which case we simply retrieve the conclusion of Theorem \ref{T-P3}. The condition in the proposition is fulfilled for all $e$ in the case where $G_\R=\SLnR$,
as illustrated in the next subsection.

\subsection{Complete description of the mapping $\mathcal{L}$ in the case of $\mathrm{SL}_n(\mathbb{R})$}

In the case where $G_{\mathbb R}=\mathrm{SL}_n({\mathbb R})$, we can describe explicitly the mapping $\mathcal{L}$. Recall from Section \ref{preliminaries} that the nilpotent orbits $\mathcal{O}_\Lambda$ of $\mathfrak{g}=\mathfrak{sl}_n(\mathbb{C})$ are parametrized by partitions $\Lambda=(\Lambda_1,\ldots,\Lambda_m)$. Let $\Lambda^0$ (resp., $\Lambda^1$) be the subsequence of even (resp., odd) parts of $\Lambda$.
The nilpotent orbit $\mathcal{O}_\Lambda$ is even if and only if $\Lambda^0=\emptyset$ or $\Lambda^1=\emptyset$. We also recall that a (complex) nilpotent orbit $\mathcal{O}_\Lambda$ such that $\Lambda^1\not=\emptyset$ has only one real form $\mathcal{O}_\Lambda^\mathbb{R}$.

\begin{thm}
\label{noneventhm2}
Let $e,h,f\in\mathfrak{sl}_n(\mathbb{R})$ form a standard triple. 
Let $\mathcal{O}_\Lambda=\mathrm{SL}_n(\mathbb{C})\cdot e$.
\begin{itemize}
\item[\rm (a)] 
Assume that $e$ is even. Then $\mathcal{L}(e)=\overline{\mathcal{O}_\Lambda\cap\mathfrak{sl}_n(\R)}=\overline{\mathcal{O}_\Lambda}\cap\mathfrak{sl}_n(\mathbb{R})$.
\item[\rm (b)]
Assume that $e$ is not even, i.e., $\Lambda^0\not=\emptyset$ and $\Lambda^1\not=\emptyset$. Let $\Lambda^0+\Lambda^1:=(\mu^0_k+\mu^1_k)_{k=1}^\ell$ be the partition of $n$ obtained by summing the sequences $\Lambda^0=(\mu^0_k)_{k=1}^\ell$ and $\Lambda^1=(\mu^1_k)_{k=1}^\ell$ term by term (adding $0$'s if necessary, we may assume that both sequences have the same length $\ell$). Then one has:
\[
\mathcal{L}(\mathcal{O}_\Lambda^\mathbb{R})=\overline{\mathcal{O}_{\Lambda^0+\Lambda^1}^\mathbb{R}}.\]
\end{itemize}
\end{thm}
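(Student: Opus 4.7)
The plan is to deduce both parts from results already established in the excerpt: Theorem~\ref{T-P3} handles part~(a), while for part~(b) I would combine Corollary~\ref{corollary-SLn} (applied to the hyperbolic element $h$) with a combinatorial identification of $\mathcal{O}_{\mathrm{Rich}}(h)$. Both parts then rely on the $\SLnR$-specific fact from Djoković (noted in the remark following Theorem~\ref{T-P3}) that $\overline{(G\cdot e')\cap\slnR}=\overline{G\cdot e'}\cap\slnR$ for every nilpotent $e'\in\slnR$.

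Part~(a) is almost immediate: Theorem~\ref{T-P3} yields the first equality $\mathcal{L}(e)=\overline{\mathcal{O}_\Lambda\cap\slnR}$ since $e$ is even, and the second equality $\overline{\mathcal{O}_\Lambda\cap\slnR}=\overline{\mathcal{O}_\Lambda}\cap\slnR$ is the aforementioned Djoković fact.

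For part~(b) the first step is to invoke Corollary~\ref{corollary-SLn} applied to $h$, which gives $\mathcal{L}(e)=\overline{\mathcal{O}_{\mathrm{Rich}}(h)}\cap\slnR$. It remains to identify $\mathcal{O}_{\mathrm{Rich}}(h)$ combinatorially. By $\mathfrak{sl}_2$-representation theory, each Jordan block of $e$ of size $\Lambda_j$ is a $\Lambda_j$-dimensional irreducible $\mathfrak{sl}_2$-module on which $h$ acts with eigenvalues $\Lambda_j-1,\Lambda_j-3,\ldots,1-\Lambda_j$, which are odd when $\Lambda_j$ is even and even when $\Lambda_j$ is odd. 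Letting $n_j$ denote the multiplicity of the eigenvalue $j\in\mathbb{Z}$ of $h$ on $\mathbb{C}^n$, the centralizer $\mathfrak{g}^h$ is the Levi subalgebra of block type $(n_j)_{j\in\mathbb{Z}}$, so $\mathcal{O}_{\mathrm{Rich}}(h)$ has Jordan type equal to the conjugate of the partition obtained by arranging the $n_j$ in decreasing order. An elementary count, splitting cases according to the parity of $k$ (and accounting for the contribution of the $0$-eigenvalue when $k$ is even), shows that the $k$-th column height $\#\{j\in\mathbb{Z}:n_j\geq k\}$ equals $\mu^0_k+\mu^1_k$, with missing parts read as $0$. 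Hence $\mathcal{O}_{\mathrm{Rich}}(h)=\mathcal{O}_{\Lambda^0+\Lambda^1}$.

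To conclude, observe that since $\Lambda^0$ and $\Lambda^1$ are both nonempty, the first part $\mu^0_1+\mu^1_1$ is a sum of a positive even and a positive odd integer, hence odd, so $\Lambda^0+\Lambda^1$ has an odd part. Therefore $\mathcal{O}_{\Lambda^0+\Lambda^1}$ admits a unique real form $\mathcal{O}_{\Lambda^0+\Lambda^1}^\R$, and applying the Djoković fact to any real representative $e'$ yields $\overline{\mathcal{O}_{\Lambda^0+\Lambda^1}}\cap\slnR=\overline{\mathcal{O}_{\Lambda^0+\Lambda^1}^\R}$, which combined with the displayed formula gives the theorem. The main obstacle I foresee is the combinatorial identification of $\mathcal{O}_{\mathrm{Rich}}(h)$: the count itself is elementary, but it requires careful and uniform treatment of the parity of $k$, as well as of the padding convention when $\Lambda^0$ and $\Lambda^1$ have different lengths.
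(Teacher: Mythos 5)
Your proposal is correct and takes essentially the same route as the paper: part (a) from Theorem~\ref{T-P3} (the paper also invokes Corollary~\ref{corollary-SLn} for the second equality, but the Djokovi\'c fact you cite works equally well), and part (b) by applying Corollary~\ref{corollary-SLn} to $h$ and identifying $\mathcal{O}_{\mathrm{Rich}}(h)=\mathcal{O}_{\Lambda^0+\Lambda^1}$ as the transpose of the $\mathrm{ad}(h)$-eigenvalue multiplicities, which is exactly the computation the paper carries out via \cite[Theorem 7.2.3]{CM} with an explicit block-diagonal $(h,e)$. One small correction to your sketch: the elementary count splits according to the parity of the eigenvalue $d$ (even eigenvalues come from the odd parts $\Lambda^1$ and contribute $\mu^1_k$, odd eigenvalues from the even parts $\Lambda^0$ contribute $\mu^0_k$), not the parity of the column index $k$; with that reading the column height is indeed $\mu^0_k+\mu^1_k$, as you claim.
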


\begin{proof}
Part {\rm (a)} follows from Corollary \ref{corollary-SLn} and Theorem \ref{T-P3}. Let us show part {\rm (b)}.
We construct a semisimple element $h\in\slnR$ which belongs to an
$\mathfrak{sl}_2$-triple together with an element of
$\mathcal{O}_\Lambda^\mathbb{R}$. 
For an integer $d\geq 1$, let $D(d)$ denote the diagonal matrix of size $d$ whose coefficients
are $d-1,d-3,\ldots,-(d-1)$, and let $J(d)$ denote the Jordan matrix of size $d$ whose coefficients are $1$ just above the diagonal and $0$ elsewhere. We define $h,e$ as the blockwise diagonal matrices
\[
h=\begin{pmatrix}
D(\Lambda_1) \\
 & \ddots \\
 & & D(\Lambda_m)
\end{pmatrix}\;\;\;\;\text{ and }\;\;\;\;
e=\begin{pmatrix}
J(\Lambda_1) \\
 & \ddots \\
 & & J(\Lambda_m)
\end{pmatrix}.
\]
Then $e$ is an element in $\mathcal{O}_\Lambda^\mathbb{R}$ and we have $[h,e]=2e$.

We claim that $\mathcal{O}_{\mathrm{Rich}}(h)=\mathcal{O}_{\Lambda^0+\Lambda^1}$.
Once we have shown this equality, we deduce in particular that $\mathcal{O}_{\mathrm{Rich}}(h)$ has only one real
form (because the first part $\mu_1^0+\mu_1^1$ of the partition $\Lambda^0+\Lambda^1$ is odd),
and the result claimed in Theorem \ref{noneventhm2}\,{\rm (b)} finally follows from Corollary \ref{corollary-SLn}.

Recall that the Richardson nilpotent orbit $\mathcal{O}_{\mathrm{Rich}}(h)$ can be computed as follows.
For $d\in\mathbb{Z}$, let $N_d$ be the number of coefficients of $h$ which are equal to $d$.
We obtain a partition $\mu=(\mu_1,\mu_2,\ldots)\vdash n$ by letting
\[\mu_j:=|\{d\in\mathbb{Z}\mid N_d\geq j\}|\quad\mbox{for all $j\geq 1$}.\]
Then, by \cite[Theorem 7.2.3]{CM}, we have
$\mathcal{O}_{\mathrm{Rich}}(h)=\mathcal{O}_\mu$.
Hence, we have to check that $\mu=\Lambda^0+\Lambda^1$.

For $d$ even, we have $N_d=|\{j\,\mid\ |d|\leq \mu^1_j-1\}|$. 
For $d$ odd, we have $N_d=|\{j\,\mid\ |d|\leq \mu^0_j-1\}|$.
Hence, for $d$ even we have $N_d\geq j$ if and only if $|d|\leq \mu^1_j-1$,
whereas for $d$ odd we have $N_d\geq j$ if and only if $|d|\leq \mu^0_j-1$.
This yields
\[\mu_j=|\{d\mbox{ even}\,\mid\ |d|\leq\mu_j^1-1\}|+|\{d\mbox{ odd}\,\mid\ |d|\leq\mu_j^0-1\}|
=\mu_j^1+\mu^0_j\quad\mbox{for all $j$},\]
whence $\mu=\Lambda^0+\Lambda^1$.
\end{proof}

\begin{ex}
{\rm (a)} If $\Lambda=(8,7,6,4,4,3,2,1)$, then we get $\Lambda^0=(8,6,4,4,2)$,
$\Lambda^1=(7,3,1)$ and $\Lambda^0+\Lambda^1=(15,9,5,4,2)$.
Hence, in $\mathfrak{sl}_{35}(\mathbb{R})$, Theorem \ref{noneventhm2} yields
$\mathcal{L}(\mathcal{O}_{(8,7,6,4,4,3,2,1)}^\mathbb{R})=\overline{\mathcal{O}_{(15,9,5,4,2)}^\mathbb{R}}$.

{\rm (b)}
It is worth noting that the map $\mathcal{L}$ is not monotone nor injective. Indeed, in the Lie algebra $\mathfrak{g}_\mathbb{R}=\mathfrak{sl}_5(\mathbb{R})$, we have the following inclusions of orbit closures:
\[\overline{\mathcal{O}_{(2,1,1,1)}^\mathbb{R}}\subset \overline{\mathcal{O}_{(2,2,1)}^\mathbb{R}}\subset \overline{\mathcal{O}_{(3,1,1)}^\mathbb{R}}\subset
\overline{\mathcal{O}_{(5)}^\mathbb{R}}\]
while we have
\[
\mathcal{L}(\mathcal{O}_{(2,1,1,1)}^\mathbb{R})=
\mathcal{L}(\mathcal{O}_{(3,1,1)}^\mathbb{R})=\overline{\mathcal{O}_{(3,1,1)}^\mathbb{R}}\subset
\mathcal{L}(\mathcal{O}_{(2,2,1)}^\mathbb{R})=\overline{\mathcal{O}_{(3,2)}^\mathbb{R}}\subset
\mathcal{L}(\mathcal{O}_{(5)}^\mathbb{R})=\overline{\mathcal{O}_{(5)}^\mathbb{R}}.\]
\end{ex}

\section{Approximation of nilpotent orbits by elliptic orbits}\label{elliptic}

In this section, we consider the limit of orbits associated to a nonzero elliptic semisimple element $x\in\fg_\R$. 
Throughout this section, we assume that we are in the equal-rank situation: 
\begin{equation}
\label{ranks}
\rank(\fg_\R)=\rank(\fk_\R).
\end{equation}

Since $x$ is elliptic, $ix$ has real eigenvalues on $\fg$.
Then we can consider the decomposition 
$\fg=\bigoplus\limits_{\mu\in\R}\fg_\mu$ associated to $ix$ as in (\ref{grading-hyp}), and this gives rise to the parabolic subalgebra $\fq:=\bigoplus\limits_{\mu\geq 0}\fg_\mu$,
corresponding to a $\theta$-stable parabolic subgroup $Q\subset G$, and to the nilpotent radical
\begin{equation}
\label{uix}
\fu:=\fu(ix)=\bigoplus_{\mu>0}\fg_\mu.
\end{equation}
Recall the Cartan decomposition $\fg=\fk\oplus\fp$ of (\ref{cartandecomp}). Then 
\[
\mbox{$K\cdot(\fu\cap\fp)$ is a closed, irreducible, $K$-stable subvariety of $\fp$}
\]
(it is closed since $\fu\cap\fp$ is $K\cap Q$-stable, and $K\cap Q$ is a parabolic subgroup of $K$; it is irreducible since $K$ is connected). It follows that
\begin{equation}
\label{uniqueorbit}
\mbox{there is a unique nilpotent $K$-orbit $\mathcal{O}(ix)\subset\fp$ such that $K\cdot(\fu\cap\fp)=\overline{\mathcal{O}(ix)}$.}
\end{equation}
The limit of orbits of $x$ can now be characterized as the closure of the nilpotent $G_\R$-orbit of $\fg_\R$ corresponding to $\mathcal{O}(ix)$ through the Kostant--Sekiguchi bijection (see (\ref{KS})):
\begin{equation}
\label{formula-elliptic}
\lim_{\nu\to 0^+}G_\R\cdot(\nu x)=\overline{\KS^{-1}(\mathcal{O}(ix))}.
\end{equation}
This result can be obtained by combining several results from Geometric Representation Theory, that relate the limit of orbits to the asymptotic support and the associated variety of $(\fg,K)$-modules defined by parabolic induction: \cite[Proposition 3.7]{BV}, \cite[Proposition 3.4]{BV2}, and \cite[Proposition 5.4]{Trapa}. To our knowledge, there is no direct proof of the result stated in (\ref{formula-elliptic}), that relies on purely topological arguments.

In this section, we use the formula stated in (\ref{formula-elliptic}) for studying approximations of certain nilpotent orbits by elliptic semisimple orbits. In Section \ref{section-8.1}, we consider even nilpotent orbits.
In Section \ref{section-8.2}, we study approximation of minimal nilpotent orbits. In Section \ref{section-8.3}, we focus on the case where $G_\R=\mathrm{SU}(p,q)$.

\subsection{Approximation of even nilpotent orbits}

\label{section-8.1}

We give an ``elliptic analogue'' of the mapping $e\mapsto \mathcal{L}(e)$ constructed in Section \ref{mapping}.

\begin{Def}
Let $e\in\mathfrak{g}_\mathbb{R}$ be a nonzero nilpotent element
and let $\mathcal{O}=G_\mathbb{R}\cdot e$ be its nilpotent orbit.
Given an $\mathfrak{sl}_2$-triple $\{h,e,f\}$ which contains $e$, we let
\[\mathcal{L}'(e)=\lim_{\nu\to 0^+}G_\mathbb{R}\cdot(\nu (e-f)).\]
The set $\mathcal{L'}(e)$ is a closed subset of the nilpotent cone $\mathcal{N}(\mathfrak{g}_\mathbb{R})$. It is independent of the choice of the triple $\{h,e,f\}$, and in fact it is independent of $e$ up to its nilpotent orbit. We can therefore set 
\begin{equation*}
\mathcal{L'}(\mathcal{O}):=\mathcal{L'}(e).
\end{equation*}
We also define $\mathcal{L'}(\{0\})=\mathcal{L'}(0)=\{0\}$.
\end{Def}

As for $\mathcal{L}(\mathcal{O})$ in Section \ref{mapping}, the set $\mathcal{L}'(\mathcal{O})$ always contains $\overline{\mathcal{O}}$
(see the proof of Proposition \ref{P-existence}).
Hence the mapping $\mathcal{L}'$ also provides us with a limit of orbits
which contains the specified orbit $\mathcal{O}$. Unlike $\mathcal{L}(\mathcal{O})$ in general, the set $\mathcal{L}'(\mathcal{O})$ is irreducible: this follows from (\ref{formula-elliptic}), taking into account that the element $e-f$ is elliptic. In the case of even nilpotent elements, the following result (which is the ``elliptic analogue'' of Theorem \ref{T-P3}) implies that any even nilpotent orbit closure can be obtained as a limit of elliptic orbits.

\begin{thm}
\label{T-elliptic-even}
Assume that condition (\ref{ranks}) holds and that the nilpotent element $e\in\fg_\R$ is even. Then,
\[\mathcal{L}'(e)=\overline{G_\R\cdot e}.\]
\end{thm}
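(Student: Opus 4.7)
The plan is to apply formula (\ref{formula-elliptic}) and identify the resulting $K$-orbit with the Kostant--Sekiguchi image of $G_\R\cdot e$. Up to $G_\R$-conjugation, I may assume $\{h,e,f\}$ is a Cayley triple; its Cayley transform $\{h',e',f'\}$ is then a normal $\mathfrak{sl}_2$-triple with $h' = i(e-f) \in \fk$ and $e',f' \in \fp$. Setting $x = e-f \in \fk_\R$, the element $x$ is elliptic semisimple with $ix = h'$, so formula (\ref{formula-elliptic}) gives
\[
\mathcal{L}'(e) = \lim_{\nu\to 0^+} G_\R\cdot(\nu x) = \overline{\KS^{-1}(\mathcal{O}(h'))},
\]
where $\fu = \fu(h')$ and $\mathcal{O}(h') \subset \fp$ is the dense $K$-orbit in $K\cdot(\fu\cap\fp)$. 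Since $e' \in \fg_2(h')\cap\fp \subset \fu\cap\fp$, the inclusion $K\cdot e' \subset \overline{\mathcal{O}(h')}$ is automatic.

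The core step will be to show $K\cdot e' = \mathcal{O}(h')$ by a dimension count. On one hand, the Kostant--Sekiguchi correspondence matches $\dim_\C K\cdot e'$ with $\tfrac{1}{2}\dim_\R G_\R\cdot e$; combined with the identity $\dim_\R G_\R\cdot e = \dim_\C G\cdot e$ (real and complex centralizers of $e$ having matching dimensions) and Richardson's formula $\dim_\C G\cdot e = 2\dim_\C\fu$ (valid since $e$ is even and $h'$ is $G$-conjugate to $h$, the two standard triples being $G$-conjugate via the Cayley transform), this yields $\dim K\cdot e' = \dim\fu$. On the other hand, the parabolic $\fq$ attached to $h'$ is $\theta$-stable because $h' \in \fk$, so the Killing duality between $\fu$ and the opposite nilradical $\overline{\fu}$ restricts to non-degenerate pairings on the $\fk$-parts and on the $\fp$-parts, forcing $\dim(\fu\cap\fk) = \dim(\overline{\fu}\cap\fk)$ and hence $\dim K - \dim(K\cap Q) = \dim(\fu\cap\fk)$. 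The action map $K\times(\fu\cap\fp)\to\fp$ factors through $K\times^{K\cap Q}(\fu\cap\fp)$, of dimension $\dim(\fu\cap\fk)+\dim(\fu\cap\fp) = \dim\fu$, so
\[
\dim K\cdot e' \leq \dim\mathcal{O}(h') = \dim K\cdot(\fu\cap\fp) \leq \dim\fu = \dim K\cdot e'.
\]
Every inequality is an equality, and the irreducibility of $\overline{\mathcal{O}(h')}$ then forces $\overline{K\cdot e'} = \overline{\mathcal{O}(h')}$; since both orbits are open in this common closure, $K\cdot e' = \mathcal{O}(h')$.

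Applying $\KS^{-1}$ gives $\KS^{-1}(\mathcal{O}(h')) = G_\R\cdot e$, whence $\mathcal{L}'(e) = \overline{G_\R\cdot e}$. The main obstacle will be the clean bookkeeping of the two dimension formulas: the Kostant--Sekiguchi dimension identity (which follows from the construction of the correspondence via the normal triple $\{h',e',f'\}$) and the $\theta$-stable parabolic identity $\dim K/(K\cap Q) = \dim(\fu\cap\fk)$ (which requires the Killing-form symmetry between $\fu\cap\fk$ and $\overline{\fu}\cap\fk$) both deserve careful citation. An alternative route would invoke directly a Richardson-type density theorem in the symmetric-pair setting (in the spirit of Kostant--Rallis), asserting that the $K$-orbit of the nilpositive element of an even $\theta$-stable $\mathfrak{sl}_2$-triple contained in $\fu\cap\fp$ is already dense in $K\cdot(\fu\cap\fp)$; this would bypass the dimension count but shifts the burden to an external reference.
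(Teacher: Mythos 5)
Your proof is correct, and its skeleton coincides with the paper's: pass to a Cayley triple, apply formula (\ref{formula-elliptic}) to the elliptic element $x=e-f$ (legitimate under (\ref{ranks})), and reduce everything to showing that the dense $K$-orbit $\mathcal{O}(i(e-f))$ in $K\cdot(\fu\cap\fp)$ is exactly $K\cdot e'=\KS(G_\R\cdot e)$. Where you diverge is in how that identification is carried out. The paper gets $e'\in\overline{\mathcal{O}(i(e-f))}$ by combining $e\in\mathcal{L}'(e)$ with the fact that $\KS$ preserves closure relations, and then squeezes $\overline{K\cdot e'}\subset\overline{\mathcal{O}(i(e-f))}\subset\overline{G\cdot e'}$, the outer inclusion coming from evenness via $G\cdot\fu=\overline{\mathcal{O}_{\mathrm{Rich}}(h')}=\overline{G\cdot e'}$. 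You instead observe directly that $e'\in\fg_2(h')\cap\fp\subset\fu\cap\fp$, and close the argument by a dimension count: $\dim_{\C} K\cdot e'=\tfrac12\dim_\R G_\R\cdot e=\tfrac12\dim_\C G\cdot e=\dim\fu$ (this is where evenness enters for you, through $G\cdot e=\mathcal{O}_{\mathrm{Rich}}(h')$ and (\ref{dim-Rich})), while $\dim K\cdot(\fu\cap\fp)\leq\dim K/(K\cap Q)+\dim(\fu\cap\fp)=\dim\fu$ via the $\theta$-stability of $\fq$, the Killing pairing of $\fu\cap\fk$ with the opposite $\fk$-part, and the $K\cap Q$-stability of $\fu\cap\fp$; equality of dimensions plus irreducibility then forces $K\cdot e'=\mathcal{O}(i(e-f))$, exactly as you say. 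Both routes are sound. Yours trades the paper's appeal to the closure-order preservation of $\KS$ (a genuinely nontrivial theorem) for the softer dimension identity $\dim_\C K\cdot e'=\tfrac12\dim_\R G_\R\cdot e$ (Kostant--Rallis/Sekiguchi), at the cost of the explicit fibration and duality bookkeeping, which is elementary but, as you note, deserves a citation or a written-out verification; the paper's argument avoids all dimension computations but leans on deeper properties of the correspondence.
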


\begin{proof}
Let $\{h,e,f\}\subset\fg_\R$ be an $\mathfrak{sl}_2$-triple containing $e$. Up to dealing with $G_\R$-conjugates, we may assume that $\{h,e,f\}$ is a Cayley triple, and we denote by $\{h',e',f'\}\subset\fg$ its Cayley transform.
In particular, we have $h'=i(e-f)$, and $K\cdot e'$ is the image of $G_\R\cdot e$ by the Kostant--Sekiguchi correspondence (see (\ref{KS})).

Let $\fg=\bigoplus\limits_{j\in\Z}\fg_j$ be the decomposition of $\fg$ into eigenspaces for $\mathrm{ad}(h')$ and let $\fu=\bigoplus\limits_{j>0}\fg_j$. Since the triple $\{h,e,f\}$ is even, then so is $\{h',e',f'\}$, and we have
\[
\mathcal{O}_{\mathrm{Rich}}(h')=G\cdot e'=G\cdot e\quad\mbox{and}\quad
G\cdot \fu=\overline{\mathcal{O}_{\mathrm{Rich}}(h')}=\overline{G\cdot e'}=\overline{G\cdot e}.
\]
Let $\mathcal{O}\subset\fp$ be the unique nilpotent $K$-orbit which is dense in $K\cdot(\fu\cap \fp)$ (see (\ref{uniqueorbit})). By (\ref{formula-elliptic}), we have
\[
\mathcal{L}'(e)=\overline{\KS^{-1}(\mathcal{O})}.
\]

We claim that $\mathcal{O}=K\cdot e'$. Once we show this, we deduce that $\mathcal{L}'(e)=\overline{\KS^{-1}(K\cdot e')}=\overline{G_\R\cdot e}$, which will complete the proof of the theorem. For this,
we have 
\[\overline{\mathcal{O}}=K\cdot(\fu\cap\fp)\subset G\cdot\fu=\overline{G\cdot e'}.\]
On the other hand, since $e\in\mathcal{L}'(e)$, and since the Kostant--Sekiguchi correspondence preserves the closure relations, we must have $e'\in\overline{\mathcal{O}}$, hence 
\[
\overline{K\cdot e'}\subset\overline{\mathcal{O}}\subset \overline{G\cdot e'}.
\]
This implies that $\mathcal{O}$ is contained in $G\cdot e'$, and in fact $\mathcal{O}=K\cdot e'$. The proof of the theorem is now complete.
\end{proof}

\subsection{Approximation of minimal nilpotent orbits}

\label{section-8.2}

\label{section-minimal}

We still assume equal-rank situation (\ref{ranks}), so that the result stated in (\ref{formula-elliptic}) applies.
The assumption also implies that we can take a Cartan subalgebra of $\fg$ of the form $\mathfrak{h}=\mathfrak{h}_\mathbb{R}\oplus i\mathfrak{h}_\mathbb{R}\subset\mathfrak{k}$, where $\mathfrak{h}_\mathbb{R}\subset\mathfrak{k}_\mathbb{R}$.
Then the root system $\Phi=\Phi(\mathfrak{g},\mathfrak{h})$ decomposes as
\[
\Phi=\Phi_\mathrm{c}\sqcup\Phi_\mathrm{nc}
\]
so that we have the root space decompositions
\[
\mathfrak{k}=\mathfrak{h}\oplus\bigoplus_{\alpha\in\Phi_\mathrm{c}}\mathfrak{g}_\alpha
\qquad\mbox{and}\qquad
\mathfrak{p}=\bigoplus_{\alpha\in\Phi_\mathrm{nc}}\mathfrak{g}_\alpha.
\]
Moreover, since each root has pure imaginary values on $\mathfrak{h}_\mathbb{R}$, we have
$\overline{\mathfrak{g}_\alpha}=\mathfrak{g}_{-\alpha}$ for all root $\alpha$.
We consider a set of positive roots $\Phi^+$ and the corresponding set of simple roots
$\Delta$, which decomposes as $\Delta=\Delta_\mathrm{c}\sqcup\Delta_\mathrm{nc}$ in the same way as $\Phi$.

We focus on the case where $\mathfrak{g}$ is classical.
Moreover, we assume that the minimal nilpotent orbit $\mathcal{O}_{\mathrm{min}}\subset\mathfrak{g}$
has real forms, or equivalently that the intersection $\mathcal{O}_{\mathrm{min}}\cap\mathfrak{p}$ is nonempty.
Every root vector $e\in\mathfrak{g}_\alpha$ associated to a long root $\alpha$
is an element of $\mathcal{O}_{\mathrm{min}}$, 
and $\mathcal{O}_\mathrm{min}\cap\mathfrak{p}$ is nonempty if and only if
$\Phi_{\mathrm{nc}}$ contains a long root.
In this case, $\mathcal{O}_{\mathrm{min}}\cap\mathfrak{p}$ is the union of one or two $K$-orbits
(equivalently, $\mathcal{O}_\mathrm{min}$ has one or two real forms)
depending on whether the symmetric pair $(G,K)$ is non-Hermitian or Hermitian; see \cite{Okuda}.
The following table lists the cases that we are left to consider.
In each case, we indicate the number of real forms in $\mathcal{O}_\mathrm{min}$.
See \cite[\S VII.9]{Knapp} and \cite{Okuda} for more details.

\begin{center}
\begin{tabular}{|c|c|c|c|c|c|c|}
\hline 
type & $\mathfrak{g}$ & $\mathfrak{g}_\mathbb{R}$ & $\mathfrak{k}$ & $|\mathcal{O}_\mathrm{min}\cap\mathfrak{g}_\mathbb{R}/G_\mathbb{R}|$ 
\\ 
\hline
AIII & $\mathfrak{sl}_n(\mathbb{C})$, $n\geq 2$ & $\mathfrak{su}(p,q)$, $1\leq p\leq q$ & $\mathfrak{sl}_p(\mathbb{C})\times\mathfrak{sl}_q(\mathbb{C})\oplus\mathbb{C}$ &  2 
\\
BI & $\mathfrak{so}_{2n+1}(\mathbb{C})$, $n\geq 2$ & $\mathfrak{so}(2p,2q+1)$, $p,q\geq 1$ & 
$\mathfrak{so}_{2p}(\mathbb{C})\times\mathfrak{so}_{2q+1}(\mathbb{C})$ & $2$ if $p=1$, $1$ if $p\geq 2$ \\
CI & $\mathfrak{sp}_{2n}(\mathbb{C})$, $n\geq 2$ & $\mathfrak{sp}_{2n}(\mathbb{R})$ & $\mathfrak{gl}_n(\mathbb{C})$ &  $2$ 
\\
DI & $\mathfrak{so}_{2n}(\mathbb{C})$, $n\geq 4$ & $\mathfrak{so}(2p,2q)$, $1\leq p\leq q$ & $\mathfrak{so}_{2p}(\mathbb{C})\times\mathfrak{so}_{2q}(\mathbb{C})$ &  2 if $p=1$, 1 if $p\geq 2$ 
\\
DIII & $\mathfrak{so}_{2n}(\mathbb{C})$, $n\geq 4$ & $\mathfrak{so}_{2n}^*$ & $\mathfrak{gl}_n(\mathbb{C})$ &  2 
\\
\hline
\end{tabular}
\end{center}

\noindent
In each case, we point out that the set $\Delta_{\mathrm{nc}}$ consists of a single element $\alpha_0$,
which is always a long root. This is shown in the following table.

\begin{center}
\begin{tabular}{|c|c|c|c|c|c|c|c|c|}
\hline 
type & $\Phi^+$ & $\Phi^+\cap\Phi_{\mathrm{nc}}$ & $\alpha_0$ \\ 
\hline
AIII & $\{\varepsilon_i-\varepsilon_j\}_{1\leq i<j\leq n}$ & $\{\varepsilon_i-\varepsilon_j\}_{i\leq p<j}$ & $\varepsilon_p-\varepsilon_{p+1}$
\\
BI & $\{\varepsilon_i\pm\varepsilon_j\}_{1\leq i<j\leq n}\cup\{\varepsilon_i\}_{1\leq i\leq n}$ &
$\{\varepsilon_i\pm\varepsilon_j\}_{i\leq p<j}\cup\{\varepsilon_i\}_{i\leq p}$
 & $\varepsilon_p-\varepsilon_{p+1}$ 
\\
CI & $\{\varepsilon_i\pm\varepsilon_j\}_{1\leq i<j\leq n}\cup\{2\varepsilon_i\}_{1\leq i\leq n}$ & $\{\varepsilon_i+\varepsilon_j\}_{i\leq j}$ & $2\varepsilon_n$ 
\\
DI & $\{\varepsilon_i\pm\varepsilon_j\}_{1\leq i<j\leq n}$ & $\{\varepsilon_i\pm\varepsilon_j\}_{i\leq p<j}$ & $\varepsilon_p-\varepsilon_{p+1}$ 
\\
DIII & $\{\varepsilon_i\pm\varepsilon_j\}_{1\leq i<j\leq n}$ & $\{\varepsilon_i+\varepsilon_j\}_{i<j}$ & $\varepsilon_{n-1}+\varepsilon_n$ \\
\hline
\end{tabular}
\end{center}
We then focus on the real form
\[
\mathcal{O}_\mathrm{min}^+:=\mathrm{KS}^{-1}(K\cdot e_0)\subset\mathcal{O}_{\mathrm{min}}\quad\mbox{where}\quad e_0\in\mathfrak{g}_{\alpha_0}\setminus\{0\}.
\]

In each case, we consider the usual numbering of the simple roots, so that the first root
is $\alpha_1=\varepsilon_1-\varepsilon_2$.
Let $h_1:=\varpi_1^\vee\in\mathfrak{h}$ be the first fundamental coweight, characterized by
\[
\alpha_1(\varpi_1^\vee)=1\qquad\mbox{and}\qquad \alpha(\varpi_1^\vee)=0\ \mbox{ for all $\alpha\in\Delta\setminus\{\alpha_1\}$}.
\]
Since $\alpha(\varpi_1^\vee)$ is a real number for all root $\alpha$, we must have 
$\varpi_1^\vee\in i\mathfrak{h}_\mathbb{R}$, and therefore $x_1:=-i\varpi_1^\vee$ belongs to
$\mathfrak{h}_\mathbb{R}$ and is elliptic.

The following statement describes the limit of orbits $\lim\limits_{\nu\to 0^+}G_\mathbb{R}\cdot(\nu x_1)$
and shows in particular that it is close to the real nilpotent orbit $\mathcal{O}_{\mathrm{min}}^+$.
In the special case of type DI, we retrieve a result stated in \cite[Theorem 4.3]{KO}.
The statement uses the parametrization of complex nilpotent orbits of classical simple Lie algebras
by admissible partitions $\mathbf{d}=(d_1,\ldots,d_k)$, which is recalled in Section \ref{preliminaries}.
Note also that, in types A and C,
the minimal nilpotent orbit is parametrized by the partition $(2,1,\ldots,1)$,
while in types B and D, it is parametrized by $(2,2,1,\ldots,1)$.

\begin{thm}
\label{T-minimal}
With the above notation,
the limit of orbits $\lim\limits_{\nu\to 0^+}G_\mathbb{R}\cdot(\nu x_1)$ always contains the real nilpotent orbit 
$\mathcal{O}_\mathrm{min}^+$. Moreover:
\begin{itemize}
\item[\rm (a)] In types AIII and DIII, the limit $\lim\limits_{\nu\to 0^+}G_\mathbb{R}\cdot(\nu x_1)$ is
the closure of $\mathcal{O}_{\mathrm{min}}^+$.
\item[\rm (b)] In types BI, CI, and DI, we have
\[
\lim_{\nu\to 0^+}G_\mathbb{R}\cdot(\nu x_1)=\{0\}\cup\mathcal{O}_\mathrm{min}^+\cup\mathcal{O}_0,
\]
where $\mathcal{O}_0$ is a real form of the complex nilpotent orbit $\mathcal{O}_{\mathbf{d}}\subset\mathfrak{g}$
corresponding to the partition
\[
\mathbf{d}=\left\{
\begin{array}{ll}
(3,1,\ldots,1) & \mbox{in types BI and DI}, \\{}
(2,2,1,\ldots,1) & \mbox{in type CI}.
\end{array}
\right.
\]
\end{itemize}
\end{thm}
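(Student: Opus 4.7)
The plan is to invoke formula (\ref{formula-elliptic}), which identifies the limit with $\overline{\KS^{-1}(\mathcal{O}(ix_1))}$, where $\mathcal{O}(ix_1)\subset\mathfrak{p}$ is the unique nilpotent $K$-orbit dense in $K\cdot(\mathfrak{u}\cap\mathfrak{p})$ and $\mathfrak{u}=\mathfrak{u}(ix_1)=\mathfrak{u}(\varpi_1^\vee)$. Since $\mathrm{ad}(\varpi_1^\vee)$ acts on each root space $\mathfrak{g}_\alpha$ by the coefficient of $\alpha_1=\varepsilon_1-\varepsilon_2$ in $\alpha$, the nilradical $\mathfrak{u}$ is the sum of positive root spaces whose roots have positive $\alpha_1$-coefficient. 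Consulting the root data tabulated before the theorem, $\mathfrak{u}\cap\mathfrak{p}$ is spanned in each type by the root vectors for the non-compact positive roots involving $\varepsilon_1$: namely $\varepsilon_1\pm\varepsilon_j$ in the appropriate range of $j$ (constrained by $p$), together with $\varepsilon_1$ in type BI and $2\varepsilon_1$ in type CI. The inclusion $\mathcal{O}_\mathrm{min}^+\subset\lim$ then follows because $\mathfrak{u}\cap\mathfrak{p}$ is a nonzero subspace of the nilpotent cone in $\mathfrak{p}$; it is contained in $\mathfrak{p}^+$ in the Hermitian cases, and in every case the smallest nonzero nilpotent $K$-orbit of $\mathfrak{p}^+$ (resp.\ of $\mathfrak{p}$ in the non-Hermitian cases) is exactly $\KS(\mathcal{O}_\mathrm{min}^+)$, hence it is contained in $K\cdot(\mathfrak{u}\cap\mathfrak{p})$ by the closure ordering on nilpotent $K$-orbits.

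For part (a), I would verify that the entire subspace $\mathfrak{u}\cap\mathfrak{p}$ already lies in $\overline{K\cdot e_0}$. Under the identification $\mathfrak{p}^+\cong\mathrm{Mat}_{p,q}(\mathbb{C})$ in type AIII, with $K=\mathrm{GL}_p\times\mathrm{GL}_q$ acting by $(A,B)\cdot M=AMB^{-1}$, the subspace $\mathfrak{u}\cap\mathfrak{p}$ consists precisely of matrices supported in the first row, which all have rank $\leq 1$ and hence belong to $\overline{K\cdot e_0}$. In type DIII, under $\mathfrak{p}^+\cong\Lambda^2\mathbb{C}^n$ with $K=\mathrm{GL}_n$ acting by congruence, $\mathfrak{u}\cap\mathfrak{p}$ consists of antisymmetric matrices supported in the first row and column, hence of matrix rank $\leq 2$; since the minimal $K$-orbit in $\Lambda^2\mathbb{C}^n$ consists exactly of rank-$2$ antisymmetric matrices, again $\mathfrak{u}\cap\mathfrak{p}\subset\overline{K\cdot e_0}$. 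In both cases one obtains $K\cdot(\mathfrak{u}\cap\mathfrak{p})=\overline{K\cdot e_0}$, and applying $\KS^{-1}$ yields $\lim=\overline{\mathcal{O}_\mathrm{min}^+}$.

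For part (b), a generic element $X\in\mathfrak{u}\cap\mathfrak{p}$ sits strictly beyond the minimal nilpotent. In type CI, a generic symmetric matrix in $\mathfrak{u}\cap\mathfrak{p}$ has the block shape $\begin{pmatrix}a & v^t\\ v & 0\end{pmatrix}$ with $v\in\mathbb{C}^{n-1}$; this matrix is of rank $2$, and under the $\mathrm{GL}_n$-congruence action on $\mathrm{Sym}_n(\mathbb{C})$ it lies in the orbit corresponding, inside $\mathfrak{sp}_{2n}(\mathbb{C})$, to the partition $(2,2,1^{2n-4})$. In types BI and DI, a direct computation in the defining $\mathfrak{so}$-realization shows that the nonzero entries of $X$ appear only in a single row and in a single transverse column, so that $X^2$ has at most one nonzero entry; a generic choice gives $X^3=0$ and $\mathrm{rank}(X^2)=1$, yielding the partition $(3,1^{2n-2})$. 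Thus the dense $K$-orbit $\mathcal{O}(ix_1)$ is a real form $\KS(\mathcal{O}_0)$ of the complex orbit $\mathcal{O}_{\mathbf{d}}$, and the closure partial order on nilpotent $K$-orbits shows that $K\cdot(\mathfrak{u}\cap\mathfrak{p})$ is stratified precisely by $\{0\}$, the minimal $K$-orbit, and $\mathcal{O}(ix_1)$. Applying $\KS^{-1}$ and using that the Kostant--Sekiguchi correspondence preserves closure relations yields $\lim=\{0\}\cup\mathcal{O}_\mathrm{min}^+\cup\mathcal{O}_0$.

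The main obstacle is the identification of the specific real form $\mathcal{O}_0$ of $\mathcal{O}_{\mathbf{d}}$ (equivalently, of its signed Young diagram), which requires tracking the real structure through the Cayley transform and computing invariants such as the signature induced by the defining bilinear form on $\mathrm{Im}(X^2)$; a companion step is to rule out any intermediate $K$-orbit in $K\cdot(\mathfrak{u}\cap\mathfrak{p})$, which amounts to checking the closure partial order on signed Young diagrams in each classical family.
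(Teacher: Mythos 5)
Your skeleton is the same as the paper's: invoke (\ref{formula-elliptic}), compute $\fu\cap\fp$ from the root data, determine the Jordan type of a generic element, and identify the dense $K$-orbit. Your Jordan-type computations (rank $\leq 1$ in AIII; rank $2$, order $2$ in CI and DIII; rank $2$, order $3$ in BI/DI) agree with the paper's, and your device of staying inside $\fp^+$ in the Hermitian cases --- where the $K$-orbits are linearly ordered by rank, so that the whole set $K\cdot(\fu\cap\fp)$ is forced to be $\overline{K\cdot e_0}$ in types AIII and DIII, and its boundary in CI is forced to be $\{0\}\cup K\cdot e_0$ --- is a legitimate and arguably cleaner alternative to the paper's mechanism for pinning down the real form $\mathcal{O}_{\mathrm{min}}^+$, which instead conjugates $e_0\in\mathfrak{g}_{\alpha_0}$ into $\fu\cap\fp$ explicitly (by hand when $\alpha_1'=\alpha_0$, by a Weyl group element of $K$ in CI, and by one-parameter subgroups attached to a compact root otherwise).

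There are, however, genuine gaps where your argument stops short of the statement. First, in the non-Hermitian cases BI and DI with $p\geq 2$ there is no $\fp^+$, and your containment of $\mathcal{O}_{\mathrm{min}}^+$ rests on the unproved assertion that $\KS(\mathcal{O}_{\mathrm{min}}^+)$ is \emph{the} smallest nonzero nilpotent $K$-orbit of $\fp$; uniqueness of a smallest orbit is not a general fact (it already fails in the Hermitian cases, where there are two), and proving it here is essentially the Ohta/Djokovi\'c closure information you are trying to avoid. A much cheaper fix is available: $\varepsilon_1-\varepsilon_{p+1}$ is a long root in $\Psi_1$, so its root vector lies in $\fu\cap\fp\cap\mathcal{O}_{\mathrm{min}}$, and since $\mathcal{O}_{\mathrm{min}}$ has a unique real form when $p\geq 2$, this already gives $\KS(\mathcal{O}_{\mathrm{min}}^+)\subset K\cdot(\fu\cap\fp)$; this is in effect what the paper's conjugation argument achieves uniformly. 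Second, for part (b) the theorem requires knowing that the boundary of $\mathcal{O}_0$ consists of $\{0\}$ and $\mathcal{O}_{\mathrm{min}}^+$ \emph{only} --- in particular, in the Hermitian subcases CI and BI/DI with $p=1$, that the other real form of $\mathcal{O}_{\mathrm{min}}$ does not occur. You explicitly defer this (``rule out any intermediate $K$-orbit'', ``identify the specific real form''), i.e.\ you leave open exactly the step the paper settles by citing Djokovi\'c and Ohta together with its independently proved containment (\ref{contains-Ominplus}). Note that identifying the precise signed Young diagram of $\mathcal{O}_0$ is \emph{not} needed for the statement (it only asserts $\mathcal{O}_0$ is some real form of $\mathcal{O}_{\mathbf d}$); what is needed, and is missing from your write-up in types BI/DI, is the boundary determination. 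A self-contained route exists: the complex boundary of $\mathcal{O}_{\mathbf d}$ contains only $\mathcal{O}_{\mathrm{min}}$ and $\{0\}$, distinct $K$-orbits inside one complex orbit have equal dimension and so cannot lie in each other's closures, hence the boundary of $\mathcal{O}(ix_1)$ lies in $(\mathcal{O}_{\mathrm{min}}\cap\fp)\cup\{0\}$, and then one concludes with the containment and, in the Hermitian subcases, with your $\fp^+$ argument --- but this chain of reasoning has to be supplied, not merely announced.
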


\begin{proof}
{}From (\ref{formula-elliptic}), we know that the limit $\lim\limits_{\nu\to 0^+}G_\mathbb{R}\cdot(\nu x_1)$
has a dense nilpotent $G_\mathbb{R}$-orbit $\mathcal{O}_0$; moreover, $\mathcal{O}_0$ is characterized by the fact that the corresponding $K$-orbit $\mathrm{KS}(\mathcal{O}_0)$ intersects the space $\mathfrak{u}\cap\mathfrak{p}$ along a dense open subset, where $\fu=\fu(\varpi_1^\vee)$ is given by (\ref{uix}). 

We have
\[
\mathfrak{u}\cap\mathfrak{p}=\bigoplus_{\beta\in\Psi_1}\mathfrak{g}_\beta
\]
where $\Psi_1:=\{\beta\in\Phi^+\cap \Phi_{\mathrm{nc}} \mid \beta(\varpi_1^\vee)>0\}$.
In the different cases, the set $\Psi_1$ can be described as follows.
\begin{center}
\begin{tabular}{|c|c|c|c|c|c|c|c|c|}
\hline
type & AIII & BI & CI & DI & DIII \\ \hline
$\Psi_1$ & $\{\varepsilon_1-\varepsilon_j\}_{j>p}$ & $\{\varepsilon_1\pm\varepsilon_j\}_{j>p}\cup\{\varepsilon_1\}$ & $\{\varepsilon_1+\varepsilon_j\}_{j\geq 1}$ & 
$\{\varepsilon_1\pm\varepsilon_j\}_{j>p}$ & $\{\varepsilon_1+\varepsilon_j\}_{j> 1}$ \\ \hline
\end{tabular}
\end{center}
Let
\[
\alpha'_1:=\left\{
\begin{array}{ll}
\varepsilon_1-\varepsilon_{p+1} & \mbox{in types AIII, BI, and DI,} \\
2\varepsilon_1 & \mbox{in type CI,} \\
\varepsilon_1+\varepsilon_n & \mbox{in type DIII.}
\end{array}
\right.
\]
In each case, $\alpha'_1$ is a long root which belongs to $\Psi_1$.
\begin{itemize}
\item In types AIII, BI, and DI, for $p=1$, we have $\alpha'_1=\alpha_0$.
\item In type CI, the product of transpositions $w=(1,n)(n+1,2n)$ is a Weyl group element which has a representant $g_w$ in $K$, and such that $w(\alpha_0)=\alpha'_1$. Whence $\mathrm{Ad}(g_w)(\fg_{\alpha_0})=\fg_{\alpha'_1}\subset\fu\cap\fp$.
\item In the other situations, $\alpha':=\alpha'_1-\alpha_0$ is a compact positive root such that
$\alpha'_1+k\alpha',\alpha_0-k\alpha'\notin\Phi\cup\{0\}$ for all $k\geq 1$.
Denoting by $\{u_{\pm\alpha'}(t)\}_{t\in\mathbb{C}}\subset K$ the one-parameter subgroup attached to $\pm\alpha'$, for every $e_0\in\mathfrak{g}_{\alpha_0}\setminus\{0\}$, 
we can find $t,s\in\mathbb{C}$ such that $\mathrm{Ad}(u_{-\alpha'}(t)u_{\alpha'}(s))(e_0)\in\mathfrak{g}_{\alpha'_1}\subset\mathfrak{u}\cap\mathfrak{p}$.
\end{itemize}
In all the cases, this implies that
\begin{equation}
\label{contains-Ominplus}
\mathcal{O}_\mathrm{min}^+\subset\lim_{\nu\to 0^+}G_\mathbb{R}\cdot(\nu x_1).
\end{equation}

In each type, the description of the set $\Psi_1$ allows us to determine the Jordan normal form of the elements
of the space $\mathfrak{u}\cap\mathfrak{p}$, viewed as matrices.
For each root $\beta$, let $E_\beta$ be a nonzero element of the root space $\mathfrak{g}_\beta$.
\begin{itemize}
\item 
In type AIII, any element of $\mathfrak{u}\cap\mathfrak{p}$ is a matrix of rank $\leq 1$.
The element $z_0:=E_{\varepsilon_1-\varepsilon_{p+1}}$ is of rank $1$.
\item 
In types CI and DIII, any element of $\mathfrak{u}\cap\mathfrak{p}$ is a matrix of rank $\leq 2$
and nilpotency order $\leq 2$.
The element $z_0:=E_{\varepsilon_1+\varepsilon_n}$ is of rank $2$ and nilpotency order $2$.
\item
In types BI and DI, any element of $\mathfrak{u}\cap\mathfrak{p}$ is a matrix of rank $\leq 2$
and nilpotency order $\leq 3$.
The element $z_0:=E_{\varepsilon_1-\varepsilon_{p+1}}+E_{\varepsilon_1+\varepsilon_{p+1}}$
is a matrix of rank $2$ and nilpotency order $3$.
\end{itemize}
In each case, we obtain that the $G$-orbit, and so the $K$-orbit, of the considered element $z_0$ 
intersects the space $\mathfrak{u}\cap\mathfrak{p}$ along a dense open subset.
In view of (\ref{formula-elliptic}), this yields
\[
\lim_{\nu\to 0^+}G_\mathbb{R}\cdot(\nu x_1)=\overline{\mathrm{KS}^{-1}(K\cdot z_0)}.
\]
In types AIII and DIII, the element $z_0$ actually belongs to $\mathcal{O}_\mathrm{min}$,
hence we must have $\mathcal{O}_\mathrm{min}^+=\mathrm{KS}^{-1}(K\cdot z_0)$.
In types BI, CI, and DI, the Jordan normal form of $z_0$ (viewed as a matrix) coincides
with the partition $\mathbf{d}$ indicated in the theorem. Moreover,
it follows from \cite{Djokovic,Ohta} that the closure of $\mathcal{O}_0:=\mathrm{KS}^{-1}(K\cdot z_0)$
is the union of $\mathcal{O}_0$, $\{0\}$, and only one real form of $\mathcal{O}_\mathrm{min}$,
which is precisely $\mathcal{O}_\mathrm{min}^+$ (see (\ref{contains-Ominplus})).
The proof  is complete.
\end{proof}

\subsection{Limit of elliptic orbits in the case of $G_\mathbb{R}=\mathrm{SU}(p,q)$} \label{section-8.3}

In this section, we assume that $G_\mathbb{R}=\mathrm{SU}(p,q)$
and $\mathfrak{g}_\mathbb{R}=\mathfrak{su}(p,q)$, with $p,q\geq 1$, $p+q=n$.
In this case, the complexification $\mathfrak{g}=\mathfrak{sl}_n(\mathbb{C})$
has  Cartan decomposition $\mathfrak{sl}_n(\mathbb{C})=\mathfrak{k}\oplus\mathfrak{p}$ with
$$
\mathfrak{k}=\left\{\begin{pmatrix} a & 0 \\ 0 & b \end{pmatrix} \mid a\in\mathfrak{gl}_p(\mathbb{C}),\ b\in\mathfrak{gl}_q(\mathbb{C}),\ \mathrm{Tr}(a)+\mathrm{Tr}(b)=0\right\}  \mbox{ \ and \ }  \mathfrak{p}=\left\{\begin{pmatrix} 0 & c \\ d & 0 \end{pmatrix} \mid c\in\mathrm{M}_{p,q}(\mathbb{C}),\ d\in\mathrm{M}_{q,p}(\mathbb{C})\right\}.
$$
Each elliptic orbit has a representative $x$ in $\mathfrak{k}_\mathbb{R}=\mathfrak{k}\cap\mathfrak{g}_\mathbb{R}$,
which is diagonalizable with pure imaginary eigenvalues.
The spectra of the diagonal blocks $ia,ib$ of $ix$ can be written as a pair of
nonincreasing sequences of real numbers
\begin{equation}
\label{Lambda-elliptic}
\Lambda=\big(
(\lambda_1\geq \lambda_2\geq \ldots\geq \lambda_p),(\mu_1\geq \mu_2\geq \ldots\geq \mu_q)
\big)\in\mathbb{R}^p\times\mathbb{R}^q\quad\mbox{such that }\sum_{k=1}^p\lambda_k+\sum_{\ell=1}^q\mu_\ell=0.
\end{equation}
Conversely, if $\Lambda$ is a pair of sequences as in (\ref{Lambda-elliptic}), 
then the diagonal matrix
\begin{equation}
\label{x-elliptic}
x:=\mathrm{diag}(-i\lambda_1,\ldots,-i\lambda_p,-i\mu_1,\ldots,-i\mu_q)
\end{equation}
is an elliptic element of $\mathfrak{g}_\mathbb{R}$ corresponding to $\Lambda$.

Let us recall from \cite[\S1.4]{Ohta} that
the nilpotent $K$-orbits of $\mathfrak{p}$ can be parametrized by the set of signed Young diagrams
with signature $(p,q)$:

A signed Young diagram $\sigma$ is obtained by filling in the boxes of a Young diagram
with signs $+$ and $-$ in such a way that the signs $+$ and $-$ alternate in each row (but not necessarily in each column). Moreover, two signed Young diagrams are considered equal if they coincide
up to permutation of the rows. The signature of $\sigma$ is the pair $(n_+,n_-)$, 
where $n_{\pm}$ indicates the number of $\pm$'s in $\sigma$. For example,
the set of signed Young diagrams of signature $(3,2)$ consists of the following list:

{\footnotesize
\[
\young(+-+-+)\,,\ \young(+-+-,+)\,,\ \young(-+-+,+)\,,\ \young(+-+,+-)\,,\ \young(+-+,-+)\,,\ \young(+-+,+,-)\,,\ \young(-+-,+,+)\,,\ \young(+-,+-,+)\,,\ \young(+-,-+,+)\,,\ \young(-+,-+,+)\,,\ \young(+-,+,+,-)\,,\ \young(-+,+,+,-)\,,\ \young(+,+,+,-,-)\,.
\]}

Let $V^+=\mathbb{C}^p\times\{0\}^q$ and $V^-=\{0\}^p\times \mathbb{C}^q$, so that 
$\mathfrak{p}$ can be viewed as the space of endomorphisms $z$ of $V=\mathbb{C}^n$
such that $z(V^+)\subset V^-$ and $z(V^-)\subset V^+$.
Given a signed Young diagram $\sigma$ with signature $(p,q)$,
let $\mathcal{O}_\sigma$ denote the set of nilpotent endomorphisms $z\in\mathfrak{p}$
which have a Jordan basis $(\varepsilon_c)$ parametrized by the boxes $c$ of $\sigma$, in such a way
that
\begin{itemize}
\item[\rm (C1)] the vector $\varepsilon_c$ belongs to $V^+$ if the box $c$ contains a sign $+$
and to $V^-$ if $c$ contains a $-$;
\item[\rm (C2)] if $c$ lies in the first column of $\sigma$, then $z(\varepsilon_c)=0$; otherwise,
$z(\varepsilon_c)=\varepsilon_{c'}$ where $c'$ is the box on the left of $c$.
\end{itemize}
The so-obtained subset $\mathcal{O}_\sigma$ is a $K$-orbit of $\mathfrak{p}$, and every
orbit is of this form for a unique signed Young diagram of signature $(p,q)$.

Next, we define a procedure to associate a signed Young diagram of signature $(p,q)$
to each $\Lambda$ as in (\ref{Lambda-elliptic}):

\begin{notation}
\label{N-sigmaLambda}
{\rm (a)}
If $\sigma$ is a signed Young diagram of signature $(p,q)$, then let $L_{r,s}*\sigma$ denote the signed Young diagram of signature $(p+r,q+s)$
obtained by adding a new box with sign $+$ on the left of each one of the first $r$ rows of $\sigma$ which start
with a $-$, and a new box with sign $-$ on the left of each one of the first $s$ rows of $\sigma$ which start with a $+$ (understanding that these new boxes can be inserted in empty rows if 
$\sigma$ has less than $r$ rows starting with a $-$ or less than $s$ rows starting with a $+$).
For example,
\[L_{2,1}*\mbox{\small $\young(+-+,-+,+,+)$}=\mbox{\small $\young(-+-+,+-+,+,+,+)$}.\]

{\rm (b)}
Whenever $\Lambda=\big((\lambda_1\geq\ldots\geq\lambda_p),(\mu_1\geq\ldots\geq\mu_q)\big)$ is a pair of sequences of real numbers, we define a signed Young diagram $\sigma_\Lambda$ of signature $(p,q)$ by induction:
\begin{itemize}
\item If $p=q=0$ ($\Lambda$ is an empty sequence), then let $\sigma_\Lambda$ be the empty signed Young diagram;
\item If $p>0$ or $q>0$, then set $\sigma_\Lambda=L_{r,s}*\sigma_{\Lambda'}$
where $(r,s)$ is such that
$\lambda_1=\ldots=\lambda_r=\mu_1=\ldots=\mu_s$ are the coefficients of $\Lambda$ equal to
$\max\Lambda$ (the maximal value in $\Lambda$) and $\Lambda':=((\lambda_{r+1},\ldots,\lambda_p),(\mu_{s+1},\ldots,\mu_q))$ is the pair of subsequences formed by the coefficients $<\max\Lambda$.
\end{itemize}
\end{notation}

\begin{ex}
For
\[
\Lambda=\big((6,3,2,2,-1,-3,-4),(3,2,-3,-3,-4)\big)
\]
we get
\begin{eqnarray*}
\sigma_\Lambda & = & L_{1,0}*L_{1,1}*L_{2,1}*L_{1,0}*L_{1,2}*L_{1,1}*\emptyset \ = \
 L_{1,0}*L_{1,1}*L_{2,1}*L_{1,0}*L_{1,2}*\young(+,-) \\
 & = & L_{1,0}*L_{1,1}*L_{2,1}*L_{1,0}*\mbox{\small $\young(-+,+-,-)$} \ = \
 L_{1,0}*L_{1,1}*L_{2,1}*\mbox{\small $\young(+-+,+-,-)$} \\
 & = & L_{1,0}*L_{1,1}*\mbox{\small $\young(-+-+,+-,+-,+)$} \ = \
 L_{1,0}*\mbox{\small $\young(+-+-+,-+-,+-,+)$} \ = \
\mbox{\small $ \young(+-+-+,+-+-,+-,+)$}.
\end{eqnarray*}
\end{ex}

\begin{thm}
\label{T-SUpq}
Assume that $G_\mathbb{R}=\mathrm{SU}(p,q)$ with $p,q\geq 1$.
\begin{itemize}
\item[\rm (a)] Let $x\in\mathfrak{g}_\mathbb{R}$ be the elliptic semisimple element
corresponding to a pair of sequences
$\Lambda=((\lambda_1\geq \ldots\geq \lambda_p),(\mu_1\geq\ldots\geq\mu_q))\in\mathbb{R}^p\times\mathbb{R}^q$ as in (\ref{Lambda-elliptic})--(\ref{x-elliptic}). Let $\fu=\fu(ix)$ be as in (\ref{uix}). Then, the unique nilpotent $K$-orbit which intersects
the space $\mathfrak{u}\cap\mathfrak{p}$ along a dense open subset is the orbit $\mathcal{O}_{\sigma_\Lambda}$ associated to the signed Young diagram $\sigma_\Lambda$.
Thus
\[
\lim_{\nu\to 0^+} G_\mathbb{R}\cdot(\nu x)=\overline{\mathrm{KS}^{-1}(\mathcal{O}_{\sigma_\Lambda})}
\]
(see (\ref{formula-elliptic})).
\item[\rm (b)] For every nilpotent $G_\mathbb{R}$-orbit $\mathcal{O}\subset\mathcal{N}(\mathfrak{g}_\mathbb{R})$, there is an elliptic orbit $G_\mathbb{R}\cdot x$ such that
$\lim\limits_{\nu\to 0^+} G_\mathbb{R}\cdot(\nu x)=\overline{\mathcal{O}}$.
\end{itemize}
\end{thm}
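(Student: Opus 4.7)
My plan for Theorem \ref{T-SUpq} is to prove (a) by an explicit inductive construction of a representative of $\mathcal{O}(ix)$ mirroring the recursive definition of $\sigma_\Lambda$, and to prove (b) by a combinatorial reduction to (a). I proceed by induction on the number of distinct values in $\Lambda$.

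Let $M = \max\Lambda$ with multiplicity $r$ in the first sequence and $s$ in the second, and split $V^\pm = W^\pm \oplus U^\pm$ where $W^+$ (resp.\ $W^-$) is spanned by the basis vectors of $V^+$ (resp.\ $V^-$) with $ix$-eigenvalue $M$. Analyzing the weight constraints defining $\fu\cap\fp$, I will check that every $z\in\fu\cap\fp$ satisfies $z|_{W^+\oplus W^-}=0$, that the restriction $\tilde z := z|_{U^+\oplus U^-}$ lies in the analogous subspace $\tilde\fu\cap\tilde\fp$ attached to $\Lambda'=((\lambda_{r+1},\ldots,\lambda_p),(\mu_{s+1},\ldots,\mu_q))$, and that $z$ may include arbitrary ``extension blocks'' $B\colon U^-\to W^+$ and $F\colon U^+\to W^-$. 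The inductive hypothesis gives an element $\tilde z_0\in\tilde\fu\cap\tilde\fp$ realizing $\sigma_{\Lambda'}$ through a basis $(\varepsilon_c)$ indexed by the boxes of $\sigma_{\Lambda'}$. To build $z_0=\tilde z_0+B+F$ realizing $\sigma_\Lambda$, I will pick a basis of $U^-\cap\ker\tilde z_0$, whose elements correspond to the leftmost boxes of the rows of $\sigma_{\Lambda'}$ starting with $-$, and define $B$ on this basis by mapping the first $\min(r,b_-)$ of them bijectively onto distinct basis vectors of $W^+$ and the rest to zero (where $b_-$ denotes the number of rows of $\sigma_{\Lambda'}$ starting with $-$); when $r>b_-$, the remaining basis vectors of $W^+$ stay in $\ker z_0\setminus\mathrm{im}\,z_0$, producing new singleton $+$ rows. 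The construction of $F$ is analogous. A direct verification that $z_0$ satisfies conditions (C1)--(C2) for $L_{r,s}*\sigma_{\Lambda'}=\sigma_\Lambda$ then shows $z_0\in\mathcal{O}_{\sigma_\Lambda}\cap(\fu\cap\fp)$, yielding the inclusion $\mathcal{O}_{\sigma_\Lambda}\subset\overline{\mathcal{O}(ix)}$.

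The converse inclusion, that the dense orbit $\mathcal{O}(ix)$ is no larger than $\mathcal{O}_{\sigma_\Lambda}$, is the main obstacle. I plan to establish it by showing inductively that, for every $z\in\fu\cap\fp$ and every $k\geq 1$, the rank of $z^k$ is bounded above by the number of boxes of $\sigma_\Lambda$ lying strictly to the right of the $k$-th column, with equality on an open dense subset. The key computation will express $z^k$ via the decomposition $z=\tilde z+B+F$ as $\tilde z^k+(B+F)\tilde z^{k-1}$ and use the inductive rank estimate for $\tilde z$ together with the generic injectivity of $B|_{U^-\cap\ker\tilde z^k}$ and $F|_{U^+\cap\ker\tilde z^k}$. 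Combined with the classification of nilpotent $K$-orbits in $\fp$ by signed Young diagrams, this will force $\mathcal{O}(ix)=\mathcal{O}_{\sigma_\Lambda}$, and the formula (\ref{formula-elliptic}) completes (a).

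For (b), I will argue that every signed Young diagram $\sigma$ of signature $(p,q)$ arises as $\sigma_\Lambda$ for some $\Lambda$. Setting $\sigma^{(0)}:=\sigma$, I recursively define $\sigma^{(i)}$ to be the diagram obtained from $\sigma^{(i-1)}$ by erasing its leftmost column, until $\sigma^{(k)}=\emptyset$. Writing $(r_i,s_i)$ for the signature of the leftmost column of $\sigma^{(i-1)}$, a direct check confirms $\sigma^{(i-1)}=L_{r_i,s_i}*\sigma^{(i)}$: rows of $\sigma^{(i-1)}$ of length $\geq 2$ pair up with rows of $\sigma^{(i)}$ via the addition on the left of a box of opposite sign (which matches the alternation condition in each row), while the length-$1$ rows of $\sigma^{(i-1)}$ match the new rows created by $L_{r_i,s_i}$ when $\sigma^{(i)}$ lacks enough rows of the required starting sign. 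Finally, any strictly decreasing sequence $v_1>\cdots>v_k$ satisfying the single linear condition $\sum_i v_i(r_i+s_i)=0$ (which admits solutions for $k\geq 2$, and forces $v_1=0$ for $k=1$) yields a valid $\Lambda$ with each $v_i$ repeated $r_i$ times in the first sequence and $s_i$ times in the second, giving $\sigma_\Lambda=\sigma$. Applying (a) and the Kostant--Sekiguchi bijection then delivers $\lim_{\nu\to 0^+}G_\R\cdot(\nu x_\Lambda)=\overline{\KS^{-1}(\mathcal{O}_\sigma)}=\overline{\mathcal{O}}$ for the prescribed nilpotent orbit $\mathcal{O}=\KS^{-1}(\mathcal{O}_\sigma)$.
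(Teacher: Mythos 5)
Your setup for part (a) --- the splitting $V^\pm=W^\pm\oplus U^\pm$ by the top eigenvalue, the observation that $z\in\fu\cap\fp$ kills $W^+\oplus W^-$ and decomposes as $z=\tilde z+B+F$ with $\tilde z$ in the analogous space for $\Lambda'$ and arbitrary blocks $B\colon U^-\to W^+$, $F\colon U^+\to W^-$ --- is exactly the paper's, and your explicit construction of $z_0=\tilde z_0+B+F$ realizing $L_{r,s}*\sigma_{\Lambda'}$ is a correct (and welcome) elaboration of a step the paper leaves as ``straightforward''. Your column-stripping argument for (b) is also the paper's proof, just read from left to right instead of built up from the rightmost columns. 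The genuine gap is in your proof of the converse inclusion in (a), i.e.\ that the dense orbit is no larger than $\mathcal{O}_{\sigma_\Lambda}$. Your inductive invariant is the family of bounds $\rank(z^k)\le N_k(\sigma_\Lambda)$ (boxes strictly right of column $k$), but this induction does not close: from $z^k=\tilde z^k+(B+F)\tilde z^{k-1}$ one gets $\rank(z^k)=\rank(\tilde z^k)+\dim\,(B+F)\bigl(\mathrm{im}\,\tilde z^{k-1}\cap\ker\tilde z\bigr)$, and since $B$ only acts on $U^-$ and $F$ only on $U^+$, the second term is governed by how $\mathrm{im}\,\tilde z^{k-1}\cap\ker\tilde z$ splits between $U^+$ and $U^-$ --- information about the \emph{signed} structure of $\tilde z$ that the inductive hypothesis (ranks of powers only) does not carry. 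Concretely, suppose $\sigma_{\Lambda'}$ has rows $(+,-)$ and $(-)$ and $(r,s)=(1,0)$, so $\sigma_\Lambda$ has two rows $(+,-)$ and $N_2(\sigma_\Lambda)=0$; a hypothetical $\tilde z$ with rows $(-,+)$ and $(-)$ has the same power ranks as $\sigma_{\Lambda'}$, yet its $\mathrm{im}\,\tilde z\cap\ker\tilde z$ lies in $U^-$, and a suitable $B$ then yields $\rank(z^2)=1>N_2(\sigma_\Lambda)$. Such a $\tilde z$ does not in fact occur in $\tilde\fu\cap\tilde\fp$, but excluding it is precisely the sign-refined statement your induction would need to carry. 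A secondary point: even granting the rank bounds, $\rank(z^k)$ only determines the underlying partition, and distinct signed diagrams share partitions; to conclude $\mathcal{O}(ix)=\mathcal{O}_{\sigma_\Lambda}$ you must add, e.g., that the dimension of a nilpotent $K$-orbit in $\fp$ depends only on its complex $G$-orbit, so that your inclusion $\mathcal{O}_{\sigma_\Lambda}\subset\overline{\mathcal{O}(ix)}$ together with equal dimensions forces equality. That is fixable, but it is not supplied by ``the classification by signed Young diagrams'' alone.

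The repair is to strengthen the inductive hypothesis to the full statement of (a) for $\Lambda'$, which is what the paper does: since $z\mapsto\tilde z$ is a surjective linear projection $\fu\cap\fp\to\tilde\fu\cap\tilde\fp$, a generic $z$ has $\tilde z$ generic, hence $\tilde z\in\mathcal{O}_{\sigma_{\Lambda'}}$ with an explicit signed Jordan basis. With that basis in hand, the space $\mathrm{im}\,\tilde z^{k-1}\cap\ker\tilde z$ and its $U^\pm$-components are known, so your own extension computation --- now run for arbitrary $(B,F)$ rather than your special choice --- shows that all the relevant semicontinuous (sign-refined) invariants are bounded by those of $\sigma_\Lambda$, with equality for generic $(B,F)$ (your explicit $(B,F)$ witnesses that the equality locus is nonempty, hence dense and open in the affine $(B,F)$-space). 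Hence $\mathcal{O}_{\sigma_\Lambda}\cap(\fu\cap\fp)$ is dense in $\fu\cap\fp$, which identifies the dense orbit directly and dispenses with universal rank bounds altogether. With that change, your argument coincides with the paper's proof of (a), and your treatment of (b) is already complete.
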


\begin{proof}
{\rm (a)} Let $(u_1,\ldots,u_p)$, resp. $(v_1,\ldots,v_q)$, be the standard basis
of $V^+$, resp. $V^-$.
Hence each vector $u_k$, resp. $v_\ell$, is an eigenvector of $x_0:=ix$ corresponding to the eigenvalue
$\lambda_k$, resp. $\mu_\ell$.
Viewing $\mathfrak{p}$ as above as the space of endomorphisms $z:V\to V$ such that $z(V^+)\subset V^-$ and $z(V^-)\subset V^+$,
the intersection $\mathfrak{u}\cap\mathfrak{p}$ coincides with the subspace of such endomorphisms
such that
\[
z(u_k)\in\langle v_j \mid \mu_j>\lambda_k\rangle,\qquad 
z(v_\ell)\in\langle u_j \mid \lambda_j>\mu_\ell\rangle\qquad\mbox{for all $k,\ell$}.
\]
In particular, for $r,s$ as in Notation \ref{N-sigmaLambda}\,{\rm (b)}, we have that
$u_1,\ldots,u_r,v_1,\ldots,v_s\in\ker z$.
Hence $z$ has the following matrix form:
\[
z=\left(
\begin{array}{ccc|ccc}
 & & & 0 & \multicolumn{2}{|c}{\gamma} \\
 \cline{5-6}
 & 0 & & \vdots & \multicolumn{2}{|c}{ } \\
 &  & & 0 & \multicolumn{2}{|c}{c'} \\
 \hline
0 & \multicolumn{2}{|c|}{\delta} & \\
\cline{2-3}
\vdots & \multicolumn{2}{|c|}{} & & 0&  \\
0 & \multicolumn{2}{|c|}{ d'} & & &
\end{array}
\right)\quad\mbox{with}\quad
\left\{
\begin{array}{l}
c'\in\mathrm{M}_{p-r,q-s}(\mathbb{C}),\ d'\in \mathrm{M}_{q-s,p-r}(\mathbb{C}), \\[2mm]
\gamma\in\mathrm{M}_{r,q-s}(\mathbb{C}),\ \delta\in\mathrm{M}_{s,p-r}(\mathbb{C}).
\end{array}
\right.
\]
Arguing by induction, we obtain that whenever $z$ is generic in $\mathfrak{u}\cap\mathfrak{p}$,
the submatrix
\[z':=\begin{pmatrix} 0 & c' \\ d' & 0 \end{pmatrix}\]
belongs to the nilpotent orbit parametrized by the signed Young diagram $\sigma_{\Lambda'}$
for $\Lambda'$ as in Notation \ref{N-sigmaLambda}\,{\rm (b)}.
This means that $z'$ has a Jordan basis parametrized 
by the boxes of $\sigma_{\Lambda'}$,
which satisfies the above conditions (C1)--(C2).
The fact that $z$ generically belongs to the orbit $\mathcal{O}_{\sigma_\Lambda}$
now follows straightforward from the definition of $\sigma_\Lambda$,
in view of the form of the matrix $z$ given above.

{\rm (b)} In view of part {\rm (a)}, the claim made in part {\rm (b)}
becomes equivalent to the claim that, for every
signed Young diagram $\sigma$ of signature $(p,q)$, we can find 
$\Lambda$ as in (\ref{Lambda-elliptic}) such that $\sigma=\sigma_\Lambda$.

Let $\ell$ be the number of columns in $\sigma$ and, for $k\in\{1,\ldots,\ell\}$, 
let $r_k$, resp. $s_k$, denote the number of signs $+$, resp. $-$, contained in the $k$-th column of
$\sigma$.
Thus $p=\sum\limits_{k=1}^\ell r_k$ and $q=\sum\limits_{k=1}^\ell s_k$.
Define the pair of sequences
\[
\Lambda=\big((N-1)^{r_1},\ldots,(N-\ell)^{r_\ell}),((N-1)^{s_1},\ldots,(N-\ell)^{s_\ell})\big),
\]
where the exponents $r_k$ and $s_k$ indicate the multiplicity of the coefficient $N-k$ in each sequence,
and $N$ is a number chosen so that the total sum of the coefficients in $\Lambda$
is equal to $0$, namely,
\[
N=\frac{1}{p+q}\sum_{k=1}^\ell k(r_k+s_k).
\]
It is easy to check (by induction) that, for every $k\in\{0,\ldots,\ell\}$,
the signed diagram
$L_{r_{\ell-k+1},s_{\ell-k+1}}*\cdots*L_{r_\ell,s_\ell}*\emptyset$ 
coincides with the subdiagram formed by the last $k$ columns of $\sigma$.
Whence
\[
\sigma_\Lambda=L_{r_1,s_1}*\cdots*L_{r_\ell,s_\ell}*\emptyset=\sigma.
\]
The proof of the theorem is complete.
\end{proof}

\begin{rem}
The calculation of $K\cdot(\fu\cap\fp)$ made in the above proof was done previously by Trapa \cite{Trapa}, though our combinatorial algorithm is somewhat different.
\end{rem}

\begin{ex}
{\rm (a)} The proof of Theorem \ref{T-SUpq} also indicates an explicit way to determine an elliptic element $x$ whose associated limit $\lim\limits_{\nu\to 0^+}G_\mathbb{R}\cdot(\nu x)$ coincides the closure of a specified
nilpotent orbit. 
For instance the signed Young diagram
\[
\sigma=\mbox{\small $\young(+-+-,+-,-+,+)$}
\]
has $\ell=4$ columns, and with the notation used in the proof we have
$(r_1,r_2,r_3,r_4)=(3,1,1,0)$ and $(s_1,s_2,s_3,s_4)=(1,2,0,1)$. In this case, we get
$\sigma=\sigma_\Lambda$ for the pair of sequences $\Lambda$ given by
\[
\Lambda=\big((8,8,8,-1,-10),(8,-1,-1,-19)\big).
\]
(In fact, every coefficient of the double sequence $\Lambda$ obtained through the algorithm presented in the proof of Theorem \ref{T-SUpq}\,{\rm (b)} should be multiplied by $\frac{1}{9}$, but we can avoid this factor.)

{\rm (b)} The elliptic element $-i\varpi_1^\vee\in\mathfrak{su}(p,q)$ induced by the first fundamental coweight $\varpi_1^\vee$
corresponds to the double sequence
\[
\Lambda=\big((\alpha,\beta^{p-1}),(\beta^q)\big)\quad\mbox{where}\quad\alpha=\frac{n-1}{n},\quad \beta=\frac{-1}{n}.
\]
Then we have
\[
\sigma_\Lambda=\mbox{\small $\young(+-,+,:,-,:)$},
\]
and $\mathrm{KS}^{-1}(\mathcal{O}_{\sigma_\Lambda})$ is in fact the real form $\mathcal{O}_\mathrm{min}^+$ considered
in Theorem \ref{T-minimal}. Thus we retrieve the result stated in Theorem \ref{T-minimal} (in type AIII).
\end{ex}


\end{document}